\documentclass[11pt]{article}

\usepackage{enumitem}
\usepackage{amsmath, amssymb, amsthm}
\usepackage{physics}
\usepackage{graphicx}
\usepackage{subcaption}
\usepackage{xcolor}
\usepackage{tikz}
\usepackage{hyperref}
\usepackage{algorithm, algorithmic}
\usepackage{array}
\usepackage{lmodern}
\usepackage{geometry}
\usepackage{indentfirst}
\usepackage{titlesec}
\usepackage{fancyhdr}
\usepackage{appendix}
\usepackage{tikz-cd}
\newtheorem{theorem}{Theorem}[section]
\newtheorem{lemma}[theorem]{Lemma}

\newtheorem{proposition}[theorem]{Proposition}
\newtheorem{corollary}[theorem]{Corollary}

\newtheoremstyle{remarkitalic} 
  {}{}                 
  {\normalfont}        
  {}                   
  {\itshape}           
  {.}                  
  { }                  
  {}                   

\theoremstyle{remarkitalic}
\newtheorem{remark}[theorem]{Remark}

\titleformat{\section}
  {\Large\bfseries}
  {\thesection}{0.5em}{}

\hypersetup{
    colorlinks = true,
    linkcolor  = red,
    urlcolor   = black,
    citecolor  = blue
}

\begin{document}
\title{The Cauchy problem for logarithmic Schrödinger
equation with a perturbation of power law nonlinearity} 
	\author{Mohamed Bensaid\\Univ. Lille, CNRS, Inria, UMR 8524 - Laboratoire Paul Painlevé, F-59000 Lille, France\\ \url{mohamed.bensaid@univ-lille.fr}}
\maketitle
\begin{abstract}
    In this paper, we study the Cauchy problem for the nonlinear Schr\"odinger equation with a nonlinearity combining a logarithmic term and a local nonlinearity which has polynomial growth. 
Our main result is the construction of solutions in the energy space $W_1(\mathbb{R}^d)$ in the subcritical case. 
We also discuss a sufficient condition for global existence in the $L^2$--critical case in each of these spaces. 
Moreover, we establish a more general Cauchy theory in the space $\Sigma_\alpha$, as introduced without details in \cite{13}.
\end{abstract}
\section{Introduction }
\subsection{Settings}
We consider the nonlinear Schrödinger equation with a nonlinearity combining a logarithmic term and a smooth function.
\begin{equation}\label{NLS}\tag{NLSlog-g}
i\partial_t u + \Delta u + \lambda u \log|u|^2+ ug(|u|^2) = 0,
\qquad x \in \mathbb{R}^d,
\end{equation}
where $d \ge 1$.

This equation is of particular interest because it combines two types of nonlinearities that appear in different physical and mathematical contexts. 
Studying such a combination allows us to extend the known results for purely logarithmic or purely power-type nonlinearities.

In this work, we focus on the Cauchy problem associated with \eqref{NLS}. 
Our main objective is to establish global well-posedness results in suitable functional spaces under appropriate assumptions on the nonlinearity $g$. 
This includes proving the existence, uniqueness, and continuous dependence of solutions on the initial data, as well as the conservation laws.

\subsection{Nonlinear Schrödinger equation with power-type nonlinearity}
We consider the case $\lambda = 0$. Under suitable assumptions on the nonlinearity $g$, the Cauchy problem is locally well-posed in $H^1(\mathbb{R}^d)$ (see \cite{1}).

We first recall the classical case of a pure power nonlinearity, namely $g(s) = \mu s^{\sigma}$, with $0 < \sigma < \frac{2}{d-2}$ if $d \geq 3$, and $0 < \sigma < \infty$ if $d = 1,2$. In this case, the equation reduces to the nonlinear Schrödinger equation of power type
\begin{equation}\label{NLSP}\tag{NLSP}
i\partial_t u + \Delta u + \mu |u|^{2\sigma}u = 0,
\qquad x \in \mathbb{R}^d.
\end{equation}
It originates from quantum mechanics, in particular from the modeling of the propagation
of a laser beam in a nonlinear medium, typically an optical fiber designed to transport
an electromagnetic signal over long distances (see \cite{Has,Agrawal,222}). Moreover, the nonlinear Schrödinger equation possesses several invariances: invariance under spatial 
and temporal translations, Galilean invariance, and invariance under multiplication by a complex constant of modulus $1$. According to Noether's theorem, each invariance gives rise to a conservation law, namely energy, mass, and angular momentum.
$$\mathcal{E}(u)=\frac12\norm{\nabla u}_2^2-\frac{\mu}{2\sigma+2}\int\abs{u}^{2\sigma+2}\mathrm{d}x,\quad M(u)=\frac12\int \abs{u}^2\mathrm{d}x,\quad J(u)=\frac12\Im\int \overline{u}\nabla u\mathrm{d}x$$
\\
When $\mu < 0$, the Cauchy problem is globally well-posed in $H^1(\mathbb{R}^d)$ as a consequence of energy conservation for any $0<\sigma<\frac{2}{d-2}$. By contrast, when $\mu>0$, global well-posedness holds only under the condition
$\sigma<\frac{2}{d}$ (see \cite{1}). Whereas, if $\frac2d\leq\sigma< \frac{2}{d-2}$, finite-time blow-up
may occur in any dimension, as shown by the Virial argument (see \cite{Gla1977}).
The equation $\eqref{NLSP}$ admits special traveling wave solutions called solitons
$$
u(t,x)=e^{i\omega t}Q_\omega(x),
$$
The existence of such a solution, which is unique, positive, radially symmetric, and radially decreasing, has been established in several works \cite{15,lions,Kwong1989,Serrin,1,lecoz2009}.

Let us return to the question of finite-time blow-up.
In the critical case $\sigma = \frac{2}{d}$, the Gagliardo–Nirenberg constant $C_{GN}$ in Proposition \ref{GN} is explicitly given by Weinstein in \cite{W}
$$C_{GN} = \frac{d+2}{d} \norm{Q_1}_{L^2}^{-4/d}.$$
In this case, we have that if $\norm{u_0}_{L^2} < \norm{Q_1}_{L^2}$, then the corresponding solution of \eqref{NLSP} with $\sigma = \frac{2}{d}$ is global in time and disperses as $t \to \pm\infty$. In the critical mass case $\norm{u_0}_{L^2} = \norm{Q_1}_{L^2}$, a fundamental result due to F.~Merle (1992) provides a complete description of finite-time blow-up solutions. More precisely, any solution with minimal mass that blows up in finite time must coincide, up to the symmetries of the flow, with the explicit pseudo-conformal profile $S$.
As a consequence, one obtains a dynamical classification of solutions at the critical mass level: if $u_0 \in H^1$ satisfies $\norm{u_0}_{L^2} = \norm{Q}_{L^2}$, then any corresponding solution is either the minimal blow-up solution (up to symmetries) or is global in time and disperses as $t \to \pm \infty$ (see \cite{M}).
\subsection{Nonlinear Schrödinger equation with logarithmic-type nonlinearity}
In the case $\mu=0$, the equation reduces to the so-called
logarithmic Schrödinger equation
\begin{equation}\label{NLSlog}\tag{logNLS}
i\partial_t u + \Delta u + \lambda u\log|u|^2 = 0,
\qquad x \in \mathbb{R}^d.
\end{equation}
It was introduced by Białynicki-Birula and Mycielski \cite{30} to address a physical requirement
known as \emph{separability of independent systems}.
 This implies that, for any initial data of the form
$$
u_{0} = u_{1,0} \otimes u_{2,0}, \quad \text{i.e.} \quad
u_{0}(x_1,x_2) = u_{1,0}(x_1) u_{2,0}(x_2),
\quad \forall x_1 \in \mathbb{R}^{d_1}, \ \forall x_2 \in \mathbb{R}^{d_2},
$$
the solution $u$ to \eqref{NLSlog} in dimension $d=d_1+d_2$ with initial data $u|_{t=0}=u_{0}$ is
$$
u(t) = u_1(t) \otimes u_2(t),
$$
where $u_j$ is the solution to \eqref{NLSlog} in dimension $d_j$ with initial data $u_{j,0}$, for $j=1,2$.\\
Moreover, \eqref{NLSlog} posses the same invariances as \eqref{NLSP} along with an additional scaling invariance: if $u = u(t,x)$ satisfies \eqref{NLSlog}, 
then for any $\kappa > 0$,
$$
u_\kappa(t,x) = \kappa  u(t,x)  e^{2 i \lambda t \log \kappa},
$$
also satisfies \eqref{NLSlog}.

The Cauchy theory for the logarithmic Schrödinger equation was first established by 
Cazenave and Haraux \cite{29} in the case $\lambda>0$, using the theory of monotone operators, 
in contrast with the power-type case \eqref{NLSP}. The restriction on the sign of $\lambda$ 
stems from the complexity of the energy associated with this equation, given by
$$
\mathcal{E}(u) = \frac{1}{2} \| \nabla u \|_{L^2}^2 - \frac{\lambda}{2} \int |u|^2 \big(\log|u|^2-1\big)  \mathrm{d}x.
 $$
Because of the form of this energy, the Cauchy theory is naturally studied in the energy space which is an Orlicz space
$
W_1(\mathbb{R}^d) 
$ (see Appendix \ref{C}),
for more details concerning Orlicz spaces, see \cite{11}. The problematic part of the potential energy,
$$
 \lambda \int_{\{|u|<1\}} |u|^2 \log|u|^2 \mathrm{d}x,
$$
is counted positively when $\lambda<0$, which makes the problem easier.\\
The case $\lambda < 0$ was treated by Carles and Gallagher \cite{13}, who investigate the long-time behavior of solutions and prove that the dispersion rate is
$
\left(t\sqrt{\log t}\right)^{-d/2},
$
which decays faster than the standard dispersion rate for \eqref{NLSP}, namely
$
t^{-d/2}.
$
In order to handle the lack of suitable estimates associated with this part of the potential energy, we follow \cite{13} and work in the weighted space $\Sigma_\alpha$ (see Appendix \ref{C}).
In \cite{3}, Hayashi and Ozawa revisited and simplified the construction of solutions in the energy space $W_1(\mathbb{R}^d)$, and extended the result to $H^1(\mathbb{R}^d)$.

\subsection{Towards more general nonlinearities ...}
A first generalization of \eqref{NLSlog} is obtained by combining the logarithmic term with a power-type nonlinearity,
leading to the equation \begin{equation}\label{NLSlp}\tag{NLSlogP}
i\partial_t u + \Delta u + \lambda u\log\abs{u}^2+\mu u\abs{u}^{2\sigma} = 0,
\qquad x \in \mathbb{R}^d,\ d\ge 1.
\end{equation}

This equation was introduced by Carles and Gallagher \cite{13} in order to study the large-time behavior of a power-type perturbation of \eqref{NLSlog}.
Since the logarithmic nonlinearity has a strong influence on the dynamics (see \cite{13}),
the relevant comparison should be made with solutions of \eqref{NLSlog}, rather than with those
of \eqref{NLSP}. Due to the different methods used to study the cases of \eqref{NLSP} and \eqref{NLSlog}, 
an additional difficulty arises in the analysis of this equation.
The existence of solutions was established by Carles and Gallagher \cite{13} in the space $\Sigma_\alpha$
for $\lambda,\mu < 0$.
More recently, uniqueness in $H^1(\mathbb{R}^d)$ has been proved by Hayashi in~\cite{14}, without addressing existence.

Inspired by these results, 
our aim is to establish well-posedness not only for \eqref{NLSlp}, but also for \eqref{NLS} in the energy space $W_1(\mathbb{R}^d)$.
\subsection{Main result}
Before stating the main result, let us recall the following energy functional associated with equation \eqref{NLS}:
\begin{equation*}\label{010}
E(u) := \frac{1}{2} \int |\nabla u|^2 \mathrm{d}x 
       + \frac{\lambda}{2} \int |u|^2 \mathrm{d}x 
       - \frac{\lambda}{2} \int |u|^2 \log|u|^2 \mathrm{d}x
       - \int G(|u|^2) \mathrm{d}x,
\end{equation*}
where 
$
G(y) := \frac12\int_0^{y} g(s) \mathrm{d}s 
$ for all $y\in \mathbb{R}_+.$
Inspired by the power-type NLS case, we assume that the function $g$ satisfies the following assumptions:
\begin{enumerate}[label=(A\arabic*), ref=(A\arabic*)]
\item \label{A1}
$g \in \mathcal{C}^0([0,+\infty),\mathbb{R})\cap \mathcal{C}^1((0,+\infty),\mathbb{R})$, 
$g(0)=0$, and
$
\lim_{s\to 0} s g'(s)=0.$
\item \label{A2}
There exist constants $C>0$ and $\sigma>0$ such that
$$
|g'(s)| \le C s^{\sigma-1}, \quad \text{for all } s\geq 1,
$$
where
$$
\begin{cases}
0<\sigma<\dfrac{2}{d-2}, & \text{if } d\ge 3,\\[6pt]
0<\sigma<+\infty, & \text{if } d=1,2,
\end{cases}
$$
\end{enumerate}
The assumptions \ref{A1}-\ref{A2} are introduced to ensure the local existence of solutions. In addition, we distinguish between the focusing and the defocusing cases:
\begin{enumerate}[label=(B\arabic*), ref=(B\arabic*)]
    \item \label{B1} Focusing case: There exists $s_0>0$ such that 
          $G(s_0)>0.$
    \item \label{B2} Defocusing case: For all $s_0>0$, 
          $G(s_0)\leq0.$
\end{enumerate}
 
\begin{theorem}
     Let $\lambda\in \mathbb{R}^*$, assume that \ref{A1} and \ref{A2} are satisfied, and that either \ref{B2} holds with $0 < \sigma < \frac{2}{d-2}$, or \ref{B1} holds with $0 < \sigma < \frac{2}{d}$.
\begin{itemize}
    \item For any $u_0\in H^1(\mathbb{R}^d)$, there exists a unique global solution
$
u \in \mathcal{C}(\mathbb{R}, H^1(\mathbb{R}^d))\cap\mathcal{C}^1(\mathbb{R},H^{-1}_{\rm loc}(\mathbb{R}^d))
$
for \eqref{NLS}, and the mass $M(u(t))$ is conserved.
\item For any $u_0 \in W_1(\mathbb{R}^d)$, there exists a unique global solution
$
u \in \mathcal{C}(\mathbb{R}, W_1(\mathbb{R}^d))\cap\mathcal{C}^1(\mathbb{R},W_1'(\mathbb{R}^d))
$
for \eqref{NLS}, and the mass $M(u(t))$ is conserved. In addition if $\lambda>0$, $
u \in \mathcal{C}_b(\mathbb{R}, W_1(\mathbb{R}^d))\cap\mathcal{C}^1(\mathbb{R},W_1'(\mathbb{R}^d))
$ and the angular momentum $J(u(t))$ and the energy $E(u(t))$ are conserved in time. 
\item For any $u_0 \in \Sigma_\alpha(\mathbb{R}^d)$, there exists a unique global solution
$
u \in \mathcal{C}(\mathbb{R}, \Sigma_\alpha(\mathbb{R}^d))\cap\mathcal{C}^1(\mathbb{R}, \Sigma_\alpha'(\mathbb{R}^d))
$
for \eqref{NLS}. Moreover, the mass $M(u(t))$, the angular momentum $J(u(t))$, and the energy $E(u(t))$ are conserved in time.
\end{itemize}
\end{theorem}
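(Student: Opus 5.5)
The plan is a compactness (regularization) argument, as in Cazenave--Haraux and Hayashi--Ozawa, combined with a uniqueness estimate that treats the logarithmic and the power parts separately. First, regularize the nonlinearity. Replace $\log|u|^2$ by a Lipschitz, bounded-growth approximation $\log(\varepsilon+|u|^2)$, truncated at large $|u|$ if needed. Replace $g$ by a truncation $g_\varepsilon$ that agrees with $g$ on $[0,1/\varepsilon]$ and still satisfies \ref{A1}--\ref{A2} with constants uniform in $\varepsilon$. For each $\varepsilon$, the standard $H^1$ theory for NLS with subcritical nonlinearity (\cite{1}) gives global solutions $u_\varepsilon\in\mathcal C(\mathbb R,H^1)$, and these conserve mass and the regularized energy $E_\varepsilon$.

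The core step is a priori bounds uniform in $\varepsilon$. Mass conservation gives $\|u_\varepsilon(t)\|_{L^2}=\|u_0\|_{L^2}$. For the gradient we use energy, and this splits by case.
\begin{itemize}
\item Under \ref{B2}, $-\int G\ge 0$, so it only helps.
\item Under \ref{B1} with $\sigma<2/d$, \ref{A1}--\ref{A2} give $|G(s)|\lesssim s+s^{\sigma+1}$. Gagliardo--Nirenberg (Proposition \ref{GN}) then gives $\int G(|u|^2)\le C(M)+C(M)\|\nabla u\|_2^{d\sigma}$ with $d\sigma<2$, which is absorbed.
\item The logarithmic part is handled as in the known theory. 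The part on $\{|u|>1\}$ is controlled by $\int|u|^{2+\eta}$ with $\eta$ small, again absorbed by Gagliardo--Nirenberg. The part on $\{|u|<1\}$ has a good sign when $\lambda>0$, which gives a bound in $W_1$ (this is where $\mathcal C_b$ comes from).
\item When $\lambda<0$, the part on $\{|u|<1\}$ is bad. For $u_0\in\Sigma_\alpha$ we control it through the weighted moment $\int\langle x\rangle^{2\alpha}|u|^2$, whose time derivative is bounded by $\|\nabla u\|_2$ times moments. Combined with the interpolation $\int_{|u|<1}|u|^2\log\frac1{|u|^2}\lesssim \|u\|_{L^2}^{2-\delta}\|\langle x\rangle^\alpha u\|_2^{\delta}$ and Gronwall, this yields bounds on every finite time interval.
\item For the $H^1$ and $W_1$ statements without energy control, we instead estimate $\frac{d}{dt}\|\nabla u_\varepsilon\|_2^2$ directly. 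Differentiating the equation, the log term contributes $2\lambda\,\mathrm{Im}\int \frac{\bar u^2}{|u|^2}(\nabla u)^2$, which is bounded by $2|\lambda|\|\nabla u\|_2^2$. The $g$ term contributes $\int|g'(|u|^2)||u|^2|\nabla u|^2$, which forces a Strichartz argument. Alternatively, we pass to the limit first and conclude globality from a blow-up alternative together with the energy bound above.
\end{itemize}

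With the bounds in hand, we pass to the limit. A uniform bound in $L^\infty_{loc}(H^1)$ (resp. $W_1$, $\Sigma_\alpha$) and a bound on $\partial_tu_\varepsilon$ in $L^\infty_{loc}(H^{-1}_{loc})$ allow Aubin--Lions, giving $u_\varepsilon\to u$ in $\mathcal C([-T,T],L^2_{loc})$. The nonlinear terms converge in $\mathcal D'$ by dominated convergence, since $u\log|u|^2$ and $ug(|u|^2)$ are continuous with the growth above. Hence $u$ is a weak solution in $L^\infty(H^1)\cap W^{1,\infty}(H^{-1}_{loc})$.

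Uniqueness follows the Cazenave--Haraux inequality $|\mathrm{Im}((z_1\log|z_1|^2-z_2\log|z_2|^2)(\bar z_1-\bar z_2))|\le 2|z_1-z_2|^2$ for the log part. The $g$ part is handled via \ref{A2}: write $|ug(|u|^2)-vg(|v|^2)|\lesssim (|u|^{2\sigma}+|v|^{2\sigma}+\omega)|u-v|$, where $\omega$ comes from small $|u|$ using $sg'(s)\to0$. Then apply Hölder and Sobolev in the $L^2$ energy estimate for $u-v$, following Hayashi \cite{14}. The nonlocal-in-$L^\infty$ growth $|u|^{2\sigma}$ requires $H^1\subset L^{2\sigma d/ ?}$ with subcriticality, or a Yudovich-type argument when only $L^p$ bounds hold. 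I expect this uniqueness estimate for the $g$ part with merely $H^1$ regularity to be the main obstacle; I would first try the Strichartz-free Yudovich/Ogawa trick with $\|u\|_{L^p}\lesssim\sqrt p\|u\|_{H^1}$ in $d=2$, and subcritical Sobolev exponents otherwise.

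Continuity in time and the conservation laws come from the standard weak-to-strong argument. Conservation of $E$ and $J$ (for $\lambda>0$ and in $\Sigma_\alpha$) gives $\|u(t)\|=\|u_0\|$ in norm, which together with weak continuity yields strong continuity $u\in\mathcal C(\mathbb R,X)$. Mass conservation follows by pairing the equation with $\bar u$.
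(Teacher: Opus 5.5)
Your proposal has a genuine gap at its central step. The missing piece is the $\varepsilon$-uniform $H^1$ bound in exactly the cases where the energy gives nothing:
\begin{itemize}
\item $\lambda>0$ with $u_0\in H^1(\mathbb{R}^d)\setminus W_1(\mathbb{R}^d)$, where $E_\varepsilon(u_0)\to+\infty$;
\item $\lambda<0$ with $u_0$ in $H^1$ or $W_1$, where the $\{|u|<1\}$ part of the potential energy has the bad sign and is not controlled by $H^1$.
\end{itemize}
Your energy and moment arguments are adequate for $W_1$ data with $\lambda>0$ and for $\Sigma_\alpha$ data. In the remaining cases, your direct estimate of $\frac{\mathrm{d}}{\mathrm{d}t}\norm{\nabla u_\varepsilon}_{L^2}^2$ stalls on $\int|g'(|u_\varepsilon|^2)||u_\varepsilon|^2|\nabla u_\varepsilon|^2$, and neither fallback works. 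A Strichartz bound for $\nabla u_\varepsilon$ is not uniform in $\varepsilon$, because Duhamel produces $\nabla u_\varepsilon\log(|u_\varepsilon|^2+\varepsilon)$, which degenerates like $\log\varepsilon$ near zeros of $u_\varepsilon$. ``Pass to the limit and use a blow-up alternative'' presupposes a local theory for the logarithmic equation, which you do not have ($z\mapsto z\log|z|^2$ is not locally Lipschitz), and it also presupposes the very energy bound that is missing.

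What you missed is that the $g$-contribution should not be estimated at all: it equals exactly $\frac{\mathrm{d}}{\mathrm{d}t}\int G(|u_\varepsilon|^2)\,\mathrm{d}x$. Rigorously (Lemmas~\ref{jamila} and~\ref{kpo}), conservation of $E_\varepsilon$ gives
\[
\frac{\mathrm{d}}{\mathrm{d}t}\Big(\frac12\norm{\nabla u_\varepsilon}_{L^2}^2-\int G(|u_\varepsilon|^2)\,\mathrm{d}x\Big)=\frac{\mathrm{d}}{\mathrm{d}t}\Big(\frac{\lambda}{2}\int|u_\varepsilon|^2\log(|u_\varepsilon|^2+\varepsilon)\,\mathrm{d}x-\lambda\int\eta(|u_\varepsilon|)\,\mathrm{d}x\Big).
\]
The nonlinear terms do not contribute to $\partial_t|u_\varepsilon|^2=-2\nabla\cdot\Im(\overline{u_\varepsilon}\nabla u_\varepsilon)$. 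So one integration by parts bounds the right-hand side by $3|\lambda|\norm{\nabla u_\varepsilon}_{L^2}^2$; this is the computation you already did for the logarithmic term. Integrate in time, discard $\int G$ under \ref{B2} or absorb it by Gagliardo--Nirenberg with $d\sigma<2$ under \ref{B1}, and Gronwall gives $\norm{\nabla u_\varepsilon(t)}_{L^2}\le Ce^{Ct}$ without ever using $\int|u_0|^2\log|u_0|^2$.

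The same inequality, passed to the limit, replaces your ``conservation gives convergence of norms'' step when no conserved energy is available. For $H^1$ data it gives $\limsup_{t\to0}\norm{\nabla u(t)}_{L^2}\le\norm{\nabla u_0}_{L^2}$. For $W_1$ data with $\lambda<0$, the bound on $\frac{\mathrm{d}}{\mathrm{d}t}\int|u_\varepsilon|^2\log(|u_\varepsilon|^2+\varepsilon)\,\mathrm{d}x$ gives upper semicontinuity of $t\mapsto\int|F_1(u(t))|\,\mathrm{d}x$ at $t=0$. Both yield strong continuity.

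Second, your uniqueness argument does not close for $d\ge2$, as you suspected. In the $L^2$ estimate for $w=u-v$, Cazenave--Haraux disposes of the logarithm. The $g$-part, however, leaves $\int(|u|^{2\sigma}+|v|^{2\sigma})|w|^2\,\mathrm{d}x$, which is not controlled by $\norm{w}_{L^2}^2$ when $u,v$ are only in $L^\infty_tH^1$. In $d=2$, the $\sqrt{p}$-growth of $\norm{u}_{L^p}$ gives $\frac{\mathrm{d}}{\mathrm{d}t}\norm{w}_{L^2}^2\lesssim\norm{w}_{L^2}^2+p^{\sigma}\norm{w}_{L^2}^{2-4/p}$, which closes only for $\sigma\le1$; nothing comparable is available for $d\ge3$. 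Conversely, Cazenave--Haraux cannot be used inside Strichartz estimates.

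The paper (after Hayashi) instead works entirely in Strichartz norms for the localized difference $\chi_Rw$ (Lemma~\ref{uniq}):
\begin{itemize}
\item the commutator terms cost $O(tM/R)$;
\item $z\log|z|^2$ is split into $g_1$, supported in $\{|z|\le1\}$, and $g_2$, supported away from $0$;
\item $g_2$ and $ug(|u|^2)$ are Lipschitz with power weights and are absorbed for short times;
\item for $g_1$ only the H\"older bound $|g_1(z_1)-g_1(z_2)|\le\frac{4}{1-\alpha}|z_1-z_2|^\alpha$ is used, with $\norm{\chi_R|w|^\alpha}_{L^2}\le|B_R|^{\frac{1-\alpha}{2}}\norm{\chi_Rw}_{L^2}^\alpha$.
\end{itemize}
Lemma~\ref{prs} with $1-\alpha=1/\log R$ (so $|B_R|^{\frac{1-\alpha}{2}}\lesssim1$) then gives
\[
\norm{\chi_Rw(t)}_{L^2}\le\big((C/R+\norm{\chi_Rw_0}_{L^2})^{1/\log R}+C\tau\big)^{\log R},
\]
which tends to $0$ as $R\to\infty$ when $w_0=0$ and $\tau$ is small; one then iterates in time. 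Uniqueness is what upgrades $E(u(t))\le E(u_0)$ to equality and continuity at $t=0$ to continuity at all times, so this gap propagates to those steps too.

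Finally, ``pairing the equation with $\bar u$'' is not licit for $u_0\notin W_1(\mathbb{R}^d)$: $\int|u|^2\log|u|^2$ may diverge, and the equation holds only in $H^{-1}_{\mathrm{loc}}$. Pair with $\chi_R^2\bar u$ instead, bound $\frac{\mathrm{d}}{\mathrm{d}t}\norm{\chi_Ru}_{L^2}^2$ by $C/R$, and let $R\to\infty$.
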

\begin{remark}
    Note that if $u_0 \in W_1(\mathbb{R}^d)$, the conservation of energy is proved only in the case $\lambda > 0$, the same situation holds for \eqref{NLSlog}, see \cite{3}. 
We expect that the conservation law remains valid for $\lambda < 0$, although this case involves additional technical difficulties that will be discussed later. 
On the other hand, if $u_0 \in \Sigma_\alpha(\mathbb{R}^d)$, we can prove energy conservation for any sign of $\lambda$.
\end{remark}
We now turn to a continuity property of the flow with respect to the initial data.
In particular, it allows us to control the evolution of the solutions when the initial data converge 
in $L^2_{\rm loc}(\mathbb{R}^d)$.
\begin{theorem}[Continuity of the flow]\label{08}
    Let $T>0$ and $u_{n,0}, u_0 \in H^1(\mathbb{R}^d)$, and let $u_n(t)$ and $u(t)$ be the solutions corresponding to the initial data $u_{n,0}$ and $u_0$, respectively. Suppose that
$$
u_{n,0} \to u_0 \quad \text{in } L^2_{\rm loc}(\mathbb{R}^d).
$$ 
and that there exist $K_T>0$ such that $$\sup_{\substack{n \geq 0 \\ t \in [0,T]}}\norm{u_n(t)}_{H^1}\leq K_T.$$
Then, for all $t \in [0,T]$, we have 
$$
u_n(t) \rightharpoonup u(t) \quad \text{in } H^1(\mathbb{R}^d)\quad \text{ and }\quad u_n(t) \to u(t) \quad \text{in } L^2_{\rm loc}(\mathbb{R}^d).
$$
\end{theorem}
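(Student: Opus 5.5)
The plan is a compactness argument followed by identification of the limit through uniqueness in $H^1$, which the first item of the main theorem provides.

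\emph{Compactness.} Fix $t\in[0,T]$. By the uniform bound $\|u_n(t)\|_{H^1}\le K_T$, a subsequence converges weakly in $H^1$. To obtain a limit as a function of time, I would first bound $\partial_t u_n$ uniformly in $L^\infty(0,T;H^{-1}(B_R))$ for every ball $B_R$, using the equation:
\begin{itemize}
\item $\Delta u_n$ is bounded in $H^{-1}$;
\item $u_n\log|u_n|^2$ is bounded in $L^2(B_R)+L^{p}(B_R)$ for suitable $p$, since $|z\log|z|^2|\lesssim |z|^{1-\epsilon}+|z|^{1+\epsilon}$;
\item $u_ng(|u_n|^2)$ is bounded in $L^{r'}$ with $r=2\sigma+2$, by \ref{A1}--\ref{A2} and Sobolev embedding.
\end{itemize}
All three terms embed into $H^{-1}(B_R)$. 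Arzelà--Ascoli, together with a diagonal argument over $R\in\mathbb{N}$ and the compact embedding $H^1(B_R)\hookrightarrow L^2(B_R)$, then yields a subsequence and a function
$$v\in L^\infty(0,T;H^1)\cap \mathcal{C}([0,T],L^2_{\rm loc}),\qquad v\in \mathcal{C}_w([0,T],H^1),$$
such that $u_n(t)\to v(t)$ in $L^2_{\rm loc}$ uniformly in $t\in[0,T]$, and $u_n(t)\rightharpoonup v(t)$ weakly in $H^1$ for every $t$. Since $u_{n,0}\to u_0$ in $L^2_{\rm loc}$, we get $v(0)=u_0$.

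\emph{Passing to the limit.} This is where the main work lies: the nonlinear terms must converge in $\mathcal{D}'$. After extracting further, $u_n\to v$ a.e. on $(0,T)\times B_R$. The map $z\mapsto z\log|z|^2$ is continuous, and its local integrability bound above is uniform in $n$, so Vitali's theorem gives $u_n\log|u_n|^2\to v\log|v|^2$ in $L^1_{\rm loc}$. The same argument applies to $u_ng(|u_n|^2)$, because it is uniformly bounded in $L^{r'}_{\rm loc}$ with $r'>1$ (subcritical growth). The linear terms pass to the limit by weak convergence. Hence $v$ solves \eqref{NLS} in the distributional sense, and by the equation $v\in \mathcal{C}([0,T],H^{-1}_{\rm loc})$ with $\partial_t v\in L^\infty(0,T;H^{-1}_{\rm loc})$.

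\emph{Uniqueness and conclusion.} The delicate step is checking that $v$ belongs to the class where the uniqueness of the main theorem applies. That uniqueness argument, following Hayashi \cite{14} and the Cazenave--Haraux inequality
$$\Big|\Im\big((z_1\log|z_1|^2-z_2\log|z_2|^2)(\bar z_1-\bar z_2)\big)\Big|\le 2|z_1-z_2|^2,$$
only requires $H^1$ solutions in $L^\infty_tH^1$ satisfying the equation in $H^{-1}_{\rm loc}$. I would therefore run the uniqueness proof directly on $v-u$, localized with cutoffs, to conclude that $v=u$ on $[0,T]$. Because the limit is independent of the subsequence, the whole sequence converges, which gives $u_n(t)\rightharpoonup u(t)$ in $H^1$ and $u_n(t)\to u(t)$ in $L^2_{\rm loc}$ for all $t\in[0,T]$.
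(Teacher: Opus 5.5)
Your argument is correct, but it follows a different route from the paper.

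\textbf{The paper's route.} The paper never extracts subsequences in this proof. It applies the localized Duhamel estimate of Lemma~\ref{uniq} directly to $w_n=u_n-u$ with \emph{nonzero} initial difference. There, Strichartz estimates handle $ug(|u|^2)$ and the large-amplitude part $g_2$ of the logarithm, and Hayashi's H\"older bound (Lemma~\ref{i}) handles the small-amplitude part $g_1$. Choosing $\alpha=1-1/\log R$ and applying the Bihari-type Lemma~\ref{prs} gives the explicit bound
\[
\|w_n(t)\|_{L^2(B_{R_0})}\le\Big(\big(C/R+\|\chi_R(u_{n,0}-u_0)\|_{L^2}\big)^{1/\log R}+C_3\tau\Big)^{\log R},\qquad t\in[0,\tau].
\]
One then lets $n\to\infty$, then $R\to\infty$, and iterates in time. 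Uniqueness is the special case $w_0=0$.

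\textbf{Your route.} You use a soft scheme:
\begin{itemize}
\item compactness, exactly as in Lemmas~\ref{limu}--\ref{ghj};
\item Vitali's theorem to pass to the limit in the nonlinear terms;
\item identification of the limit through uniqueness in $L^\infty(0,T;H^1)$, i.e. the corollary following the theorem;
\item the subsequence principle.
\end{itemize}
This is not circular, because that corollary is proved from Lemma~\ref{uniq} with $w_0=0$, independently of the continuity statement.

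\textbf{Comparison.} The paper's approach gives a quantitative modulus of continuity in $L^2_{\rm loc}$. It also never requires checking that some limit object lies in the uniqueness class. Yours is modular: it shows the statement follows formally from uniform $H^1$ bounds plus uniqueness, and it reuses the existence machinery verbatim. It does not avoid the hard estimate, though; it only confines it to the case $w_0=0$.

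\textbf{One correction to your last step.} The uniqueness you invoke does not rest on the Cazenave--Haraux inequality (Lemma~\ref{ingln}). An $L^2$ energy argument built on that inequality cannot close, because of the power term. For $H^1$ solutions with $d\ge 2$, the quantity $\int(1+|u|^{2\sigma}+|v|^{2\sigma})|w|^2$ is not controlled by $\|w\|_{L^2}^2$. This is exactly why the paper, following Hayashi, works with Strichartz norms and the H\"older/Bihari mechanism. So the black box you need is Lemma~\ref{uniq} together with Lemma~\ref{prs}.
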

\begin{remark}
    We remark that if in addition we suppose that $u_n(t)$ is uniformly bounded in $W_1(\mathbb{R}^d)$ we get that $u_n(t)\rightharpoonup u(t)$ in $W_1(\mathbb{R}^d)$.
\end{remark}
The last main result concerns equation \eqref{NLSlp}. 
It is well known that, in the case of the power-type nonlinear Schrödinger equation, when 
$
\sigma = \frac{2}{d},
$
the global behavior of solutions is closely related to the critical mass. We establish a sufficient condition for global existence in the focusing case, expressed in terms of the mass of the ground state for \eqref{NLSP}, rather than for \eqref{NLSlp}.
\begin{theorem}
      Let $\lambda\in \mathbb{R}^*$, $\mu>0$, and $\sigma=\frac2d$.

      \begin{itemize}
    \item For any $u_0\in H^1(\mathbb{R}^d)$ such that $\norm{u_0}_2<\norm{Q}_2$, there exists a unique global solution
$
u \in \mathcal{C}(\mathbb{R}, H^1(\mathbb{R}^d))\cap\mathcal{C}^1(\mathbb{R},H^{-1}_{\rm loc}(\mathbb{R}^d))
$
for \eqref{NLSlp}, and the mass $M(u(t))$ is conserved.
\item For any $u_0 \in W_1(\mathbb{R}^d)$ such that $\norm{u_0}_2<\norm{Q}_2$, there exists a unique global solution
$
u \in \mathcal{C}(\mathbb{R}, W_1(\mathbb{R}^d))\cap\mathcal{C}^1(\mathbb{R},W_1'(\mathbb{R}^d))
$
for \eqref{NLSlp}, and the mass $M(u(t))$ is conserved. In addition if $\lambda>0$, $
u \in \mathcal{C}_b(\mathbb{R}, W_1(\mathbb{R}^d))\cap\mathcal{C}^1(\mathbb{R},W_1'(\mathbb{R}^d))
$
and the angular momentum $J(u(t))$ and the energy $E(u(t))$ are conserved in time.
\item For any $u_0 \in \Sigma_\alpha(\mathbb{R}^d)$ such that $\norm{u_0}_2<\norm{Q}_2$, there exists a unique global solution
$
u \in \mathcal{C}(\mathbb{R}, \Sigma_\alpha(\mathbb{R}^d))\cap\mathcal{C}^1(\mathbb{R}, \Sigma_\alpha'(\mathbb{R}^d))
$
for \eqref{NLSlp}. Moreover, the mass $M(u(t))$, the angular momentum $J(u(t))$, and the energy $E(u(t))$ are conserved in time.
\end{itemize}
\end{theorem}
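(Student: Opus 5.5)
The plan is to reduce the critical case $\sigma=\frac2d$, $\mu>0$, to the same local-plus-a-priori-bound scheme used for the first main theorem. The local Cauchy theory (existence, uniqueness, continuity of the flow, mass conservation in $H^1$, $W_1$, $\Sigma_\alpha$, and energy and momentum conservation where available) does not depend on the sign of $G$ or on $\sigma<\frac2d$. It only uses \ref{A1}--\ref{A2} with $g(s)=\mu s^{\sigma}$, which hold since $\frac2d<\frac{2}{d-2}$. So I take the local well-posedness and the blow-up alternative for granted in each space: the solution is global as long as $\|\nabla u(t)\|_{2}$ stays bounded on bounded time intervals, together with the $W_1$ or $\Sigma_\alpha$ norm. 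What remains is a uniform bound on $\|\nabla u(t)\|_2$ under $\|u_0\|_2<\|Q\|_2$.

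\textbf{Step 1: gradient bound via energy conservation.} I first work with data in $\Sigma_\alpha$, or in $W_1$ with $\lambda>0$, where energy is conserved. Weinstein's sharp Gagliardo--Nirenberg inequality (Proposition \ref{GN} with $C_{GN}=\frac{d+2}{d}\|Q\|_2^{-4/d}$) gives
\[
\frac{\mu}{2\sigma+2}\int|u|^{2\sigma+2}\le \frac12\Big(\frac{\|u\|_2}{\|Q\|_2}\Big)^{4/d}\|\nabla u\|_2^2
\]
for $\mu=1$. For general $\mu>0$, $Q$ means the ground state of \eqref{NLSP} with that $\mu$, so the threshold $\|Q\|_2$ scales as $\mu^{-d/4}$. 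Mass conservation then yields
\[
E(u_0)\ge \frac12\Big(1-\frac{\|u_0\|_2^{4/d}}{\|Q\|_2^{4/d}}\Big)\|\nabla u\|_2^2+\frac\lambda2\|u_0\|_2^2-\frac\lambda2\int|u|^2\log|u|^2 .
\]

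\textbf{Step 2: controlling the logarithmic term.} This is the main obstacle. The superlinear part $\int_{|u|>1}|u|^2\log|u|^2$ (relevant when $\lambda>0$) is bounded by $C_\varepsilon\int|u|^{2+\varepsilon}\le C\|u\|_2^{2+\varepsilon-d\varepsilon/2}\|\nabla u\|_2^{d\varepsilon/2}$. For small $\varepsilon$ this is sublinear in $\|\nabla u\|_2^2$, so it is absorbed by the positive coercive coefficient from Step 1. The sublinear part $-\lambda\int_{|u|<1}|u|^2\log|u|^2$ has a good sign when $\lambda>0$. When $\lambda<0$ it has the bad sign and must be controlled in $\Sigma_\alpha$ through the weighted estimate $\int_{|u|<1}|u|^2\log\frac1{|u|^2}\lesssim \|u\|_2^{2-\eta}\|\langle x\rangle^\alpha u\|_2^{\eta}$, as in \cite{13}. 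I then pair this with the virial-type identity $\frac{d}{dt}\|\langle x\rangle^{\alpha}u\|_2^2\lesssim\|\langle x\rangle^{\alpha}u\|_2\,\|\nabla u\|_2$, exactly as in the proof of the first theorem. A Gronwall argument closes the bound, giving growth at most polynomial in $t$, hence globality.

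\textbf{Step 3: the remaining cases.} For $H^1$ data, and for $W_1$ data with $\lambda<0$, energy conservation is not available directly. I approximate $u_0$ by $u_{n,0}\in\Sigma_\alpha$ with $\|u_{n,0}\|_2<\|Q\|_2$ and $u_{n,0}\to u_0$ in the relevant space. Steps 1--2 give bounds on $\|\nabla u_n(t)\|_2$ that are uniform in $n$ on $[0,T]$. This needs the energies $E(u_{n,0})$ to converge and the weights not to enter the constants in a non-uniform way. For $\lambda<0$ I would instead use the $H^1$ bound obtained by estimating $\frac{d}{dt}\|\nabla u\|_2^2$ directly, as in the first theorem. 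Theorem \ref{08} then passes to the limit, giving a global solution with the same bound. Uniqueness and the conservation laws are inherited from the local theory. I expect the hardest point to be making the bound uniform in the approximation for $\lambda<0$.
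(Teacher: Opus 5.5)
There is a genuine gap, and it begins with your first step. You take for granted a local well-posedness theory with a blow-up alternative. None is available for this equation. The map $z\mapsto z\log|z|^2$ is not Lipschitz near $0$, and $u\log|u|^2\notin H^1$ in general, so there is no fixed-point construction. The paper says explicitly that the local Cauchy problem is unclear whenever a global a priori bound is missing (mass above $\|Q\|_2$, or supercritical power). Existence here comes only from compactness applied to the regularized solutions $u_\varepsilon$ of \eqref{eps}. That requires a bound on $\|\nabla u_\varepsilon(t)\|_2$, uniform in $\varepsilon$, on each $[0,T]$. Your estimates must therefore be made on $u_\varepsilon$, and conservation of $E(u)$ cannot be an input. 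It is proved only after the limit is built, and only in $\Sigma_\alpha$ or in $W_1$ with $\lambda>0$. Likewise, Theorem~\ref{08} compares $u_n$ with a solution $u$ that is assumed to already exist, so it cannot be used to construct the solution for $u_0$.

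Second, your route to the bound does not cover general $H^1$ data.
\begin{itemize}
\item If $\lambda>0$ and $u_0\in H^1\setminus W_1$, take any approximants $u_{n,0}\to u_0$ in $L^2$. Fatou gives $\int_{\{|u_{n,0}|<1\}}|u_{n,0}|^2\log|u_{n,0}|^{-2}\to+\infty$, hence $E(u_{n,0})\to+\infty$, and likewise $E_\varepsilon(u_0)\to+\infty$. So Steps 1--2 give nothing uniform.
\item If $\lambda<0$ and $u_0\notin\Sigma_\alpha$, the weights $\|\langle x\rangle^\alpha u_{n,0}\|_2$ blow up, and so do your Gr\"onwall constants.
\end{itemize}

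The missing idea is Lemma~\ref{jamila}. Along the regularized flow, $\partial_t|u_\varepsilon|^2=-2\nabla\cdot\Im(\overline{u_\varepsilon}\nabla u_\varepsilon)$, and an integration by parts gives
\[
\Bigl|\frac{\mathrm{d}}{\mathrm{d}t}\int|u_\varepsilon|^2\log(|u_\varepsilon|^2+\varepsilon)\Bigr|\le5\|\nabla u_\varepsilon\|_2^2,
\qquad
\Bigl|\frac{\mathrm{d}}{\mathrm{d}t}\int\eta(|u_\varepsilon|)\Bigr|\le\tfrac12\|\nabla u_\varepsilon\|_2^2 .
\]
These hold for either sign of $\lambda$ and need no weight. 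Combined with conservation of $E_\varepsilon$, they give
\[
\frac{\mathrm{d}}{\mathrm{d}t}\Bigl(\tfrac12\|\nabla u_\varepsilon\|_2^2-\tfrac{\mu d}{2(d+2)}\int|u_\varepsilon|^{2+4/d}\Bigr)\le3|\lambda|\,\|\nabla u_\varepsilon\|_2^2 .
\]
Weinstein's inequality with $\|u_0\|_2<\|Q\|_2$ then turns this into $\|\nabla u_\varepsilon(t)\|_2^2\le C_1\int_0^t\|\nabla u_\varepsilon\|_2^2+C_2$. Gr\"onwall yields a bound uniform in $\varepsilon$ for every $u_0\in H^1$. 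The compactness, uniqueness and continuity arguments of Section~\ref{1er} then produce the $H^1$ solution, and Sections~\ref{2em}--\ref{3em} upgrade it to $W_1$ and $\Sigma_\alpha$.

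Your fallback of ``estimating $\frac{\mathrm{d}}{\mathrm{d}t}\|\nabla u\|_2^2$ directly'' points toward this, but it falls short in three ways:
\begin{itemize}
\item You restrict it to $\lambda<0$.
\item You apply it to the limit equation, where the computation is not justified.
\item You do not see that it alone settles every case, including $\lambda>0$ with $H^1$ data, where your energy route fails.
\end{itemize}
Your Steps 1--2, redone on $u_\varepsilon$ with $E_\varepsilon$, are essentially the Carles--Gallagher approach. They do recover the $\Sigma_\alpha$ case and the $W_1$, $\lambda>0$ case, including the uniform-in-time bound behind the $\mathcal{C}_b$ statement, but not the full theorem.
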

\begin{remark}
   We emphasize that we only discuss the $L^2$--mass subcritical case. One may naturally ask what happens when $\sigma > \frac{2}{d}$, or even in the critical case $\sigma = \frac{2}{d}$ with $\|u_0\|_{L^2} = \|Q\|_{L^2}$. 
In these situations, it would be necessary to analyze the local Cauchy theory, which is apriori not straightforward in our setting. Moreover, at least formally with a similar Viriel argument, one can show that under certain conditions on $u_0$ and $\sigma$, blow-up may occur.
\end{remark}
\begin{remark}
    The transformation
$$
u(t,x) \longmapsto 
\left|\frac{\lambda}{\mu}\right|^{\frac{1}{2\sigma}}
\exp\left( \frac{i\lambda}{\sigma}
\big( \log|\lambda| - \log|\mu| \big)t \right)
 u\!\left( |\lambda|t, \sqrt{|\lambda|} x \right),
$$
defines an invariance of the equation (\ref{NLSlp}). This invariance allows one to reduce the problem for \eqref{NLSlp} to the normalized case
$\lambda, \mu \in \{\pm 1\}$.
\end{remark}


To simplify the presentation of the Cauchy theory, we restrict our attention to the evolution for positive times.
By reversibility of the equation, the results can be extended to negative times.
For instance, if $ u(t,x) $ is a \emph{strong solution}, then
$ v(t,x) = \overline{u(-t,x)} $ is also a solution of our problem with initial data
$ v_0 = \overline{u_0} $.
This observation allows us to extend the theory to negative times without difficulty.

This paper is organized as follows. In Section \ref{1er}, we construct solutions in $H^1(\mathbb{R}^d)$ for any sign of $\lambda$, including both \eqref{NLS} in the subcritical case and \eqref{NLSlp} in the critical case, and we also provide the proof of Theorem \ref{08}. 
Section \ref{2em} deals with the construction of solutions in $W_1(\mathbb{R}^d)$ under the same assumptions, 
while Section \ref{3em} focuses on the construction of solutions in $\Sigma_\alpha$. 
The Appendix contains several technical lemmas along with their proofs.

\textbf{Notation:} We use the notation $A \lesssim_* B$ to denote the inequality $A \leq C(*) B$, and $\Omega \Subset \mathbb{R}^d$ means that $\Omega$ is an open set and its closure is compact.  For $f,g\in L^2(\mathbb{R}^d)$,  $$\Re(f,g):=\Re(\int f\overline{g})\quad \text{ and }\quad \Im(f,g):=\Im(\int f\overline{g}).$$
\section[Construction of the solution in H1(Rd)]{Construction of the solution in $ H^1(\mathbb{R}^d) $
}\label{1er}
Throughout this section until subsection \ref{l2}, we assume that $\lambda \in \mathbb{R}^*$, that \ref{A1}--\ref{A2} are satisfied, and that either \ref{B2} holds with $0 < \sigma < \frac{2}{d-2}$, or \ref{B1} holds with $0 < \sigma < \frac{2}{d}$, unless otherwise stated.
The construction of the solution in $ H^1(\mathbb{R}^d) $ relies on a compactness argument.
First, we regularize equation~\eqref{NLS}.

\begin{equation}\label{ppe}
      \left\{
    \begin{array}{ll}
         i\partial_t u_{\varepsilon} + \Delta u_{\varepsilon} + \lambda u_{\varepsilon} \log(|u_{\varepsilon}|^2 + \varepsilon) + u_\varepsilon g(|u_\varepsilon|^{2})= 0,  \\
       u_{\varepsilon}(0,x) = u_0,
    \end{array}
\right.
\tag{NLSlog$_\varepsilon$-g}
\end{equation}
for $0<\varepsilon<1$.
The following theorem can be established by means of Kato's method, as developed in \cite[Chapter 4]{1}.
\begin{theorem}\label{ppp}
Let $ u_0 \in H^1(\mathbb{R}^d) $.  
The Cauchy problem admits a unique maximal solution
$$
u_\varepsilon \in \mathcal{C}\bigl([0,T_{\max}^\varepsilon), H^1(\mathbb{R}^d)\bigr)
\cap 
\mathcal{C}^1\bigl([0,T_{\max}^\varepsilon), H^{-1}(\mathbb{R}^d)\bigr),
$$
together with the usual blow-up criterion and the following conservation laws.

\medskip
$\bullet$ \textit{Mass:}
$$
M(u) := \frac{1}{2}\int_{\mathbb{R}^d} |u|^2  \mathrm{d}x.
$$

\medskip
$\bullet$ \textit{Energy:}
$$
E_\varepsilon(u)
:= \frac{1}{2}\int_{\mathbb{R}^d} |\nabla u|^2  \mathrm{d}x
- \frac{\lambda}{2}\int_{\mathbb{R}^d} |u|^2 \log\bigl(|u|^2+\varepsilon\bigr)  \mathrm{d}x
+ \lambda \int_{\mathbb{R}^d} \eta(|u|)  \mathrm{d}x
- \int_{\mathbb{R}^d} G(|u|^2)  \mathrm{d}x,
$$
where
$$
\eta(s) := \int_0^s \frac{z^3}{z^2+\varepsilon}  \mathrm{d}z,
\qquad s \ge 0.
$$
\end{theorem}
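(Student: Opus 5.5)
The plan is to fit \eqref{ppe} into the abstract framework of Kato's method as presented in \cite[Chapter 4]{1}, specifically \cite[Theorem 4.4.6]{1}. Write the nonlinearity as $f(u)=f_1(u)+f_2(u)$, where
$$
f_1(u)=\lambda u\log(|u|^2+\varepsilon), \qquad f_2(u)=u\,g(|u|^2).
$$
The key is that with $\varepsilon>0$ fixed, $f_1$ is no longer singular at $u=0$. Indeed, $\log(|u|^2+\varepsilon)$ is smooth and bounded below by $\log\varepsilon$. Hence $f_1$ is $\mathcal{C}^1$ on $\mathbb{C}$ (as a real function) and satisfies $|f_1'(u)|\lesssim_\varepsilon 1+\log(1+|u|^2)\lesssim_{\varepsilon,\delta}1+|u|^{\delta}$ for any small $\delta>0$. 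So $f_1$ is an energy-subcritical power-like nonlinearity.

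For $f_2$, assumptions \ref{A1}--\ref{A2} give $f_2(0)=0$. Writing $\partial_u f_2$ and $\partial_{\bar u} f_2$ in terms of $g(|u|^2)$ and $|u|^2g'(|u|^2)$, we get:
\begin{itemize}
\item these derivatives are continuous near $0$, because $sg'(s)\to0$ and $g(0)=0$;
\item they are bounded by $C|u|^{2\sigma}$ for $|u|\ge1$, using \ref{A2} together with $|g(s)|\le |g(1)|+C s^{\sigma}/\sigma$.
\end{itemize}
Thus $|f(u)-f(v)|\lesssim (1+|u|^{2\sigma}+|v|^{2\sigma})|u-v|$, with $\sigma<2/(d-2)$.

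The steps, in order, are as follows.

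\textbf{Step 1.} Verify the hypotheses of \cite[Theorem 4.4.6 / Corollary 4.4.?]{1} for $f=f_1+f_2$, splitting each into a part that is Lipschitz in $L^2$ (from $|u|\le1$) and a part controlled in $L^{r}$ with $r=2\sigma+2$ (or $r=2+\delta$ for $f_1$). Each piece admits the potential
$$
F(u)=\int_0^{|u|}f(s)\,ds,
$$
which gives the energy structure.

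\textbf{Step 2.} Check that the potential of $f_1$ matches the stated energy. A direct computation gives
$$
\tfrac{d}{ds}\Bigl[\tfrac{s^2}{2}\log(s^2+\varepsilon)-\eta(s)\Bigr]=s\log(s^2+\varepsilon)+\tfrac{s^3}{s^2+\varepsilon}-\tfrac{s^3}{s^2+\varepsilon}.
$$
This yields $E_\varepsilon$ with the right signs. The potential of $f_2$ is $G(|u|^2)$, by the definition $G(y)=\tfrac12\int_0^y g$.

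\textbf{Step 3.} Invoke the abstract result. It yields local existence and uniqueness in $\mathcal{C}([0,T],H^1)\cap \mathcal{C}^1([0,T],H^{-1})$ (uniqueness holding in $L^\infty H^1$ via Strichartz estimates), together with the blow-up alternative $\|u_\varepsilon(t)\|_{H^1}\to\infty$ as $t\uparrow T^\varepsilon_{\max}<\infty$.

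\textbf{Step 4.} Conservation of mass and energy. Mass follows from the gauge invariance $f(e^{i\theta}u)=e^{i\theta}f(u)$. Energy follows from the standard regularization argument of \cite{1}: approximate by $H^2$ data or by regularizing with $(I-\delta\Delta)^{-1}$, then pass to the limit using continuous dependence.

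The main obstacle is Step 1 for $f_2$, since $g$ is only $\mathcal{C}^0$ at $0$ and $\mathcal{C}^1$ away from it. One must show that $u\mapsto ug(|u|^2)$ is genuinely $\mathcal{C}^1$ (or at least locally Lipschitz with the right growth) across $u=0$. The tool for this is $\lim_{s\to0}sg'(s)=0$: by the mean value theorem along segments avoiding the origin (or by a limiting argument), it gives
$$
|f_2(u)-f_2(v)|\le \sup_{w\in[u,v]}\bigl(|g(|w|^2)|+2|w|^2|g'(|w|^2)|\bigr)|u-v|.
$$
A secondary point is the non-polynomial growth of $f_1$, which is harmless since $\log$ is dominated by any small power.
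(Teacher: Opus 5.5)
Your proposal is correct and follows the same route as the paper. The paper only asserts that Theorem~\ref{ppp} follows from Kato's method as in \cite[Chapter 4]{1}, without checking any hypotheses. Your verification supplies exactly what that citation tacitly requires: smoothness of $u\mapsto\lambda u\log(|u|^2+\varepsilon)$ with $|Df_1(u)|\lesssim_\varepsilon 1+|u|^\delta$ for fixed $\varepsilon>0$, the $\mathcal{C}^1$ regularity of $u\mapsto ug(|u|^2)$ at the origin (from $g(0)=0$ and $sg'(s)\to0$) together with $|Df_2(u)|\lesssim 1+|u|^{2\sigma}$ from \ref{A2}, and the identification of the potential $\lambda\bigl(\tfrac{s^2}{2}\log(s^2+\varepsilon)-\eta(s)\bigr)+G(s^2)$ with $E_\varepsilon$.
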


\begin{remark}\label{mmm}
For fixed $0<\varepsilon<1$, if $ u \in H^1(\mathbb{R}^d) $, then:

\medskip
\noindent\textit{i)} $ |u|^2 \log\bigl(|u|^2+\varepsilon\bigr) \in L^1(\mathbb{R}^d) $.
Indeed, for any $ \delta > 0 $, one has
$$
|u|^2 \log(\varepsilon)
\le |u|^2 \log\bigl(|u|^2+\varepsilon\bigr)
\le |u|^2 \log\bigl(|u|^2+1\bigr)
\le \frac{2}{\delta} |u|^{2+\delta}.
$$
Hence,
$$
\bigl| |u|^2 \log\bigl(|u|^2+\varepsilon\bigr) \bigr|
\le \frac{2}{\delta} |u|^{2+\delta}
- \log(\varepsilon) |u|^2.
$$
Choosing $ \delta $ such that $ 2+\delta \in (2,2^*) $, the conclusion follows from the Sobolev embedding.

\noindent\textit{ii)}  We also have
$$
\int_0^{|u|} \frac{z^3}{z^2+\varepsilon}  \mathrm{d}z \in L^1(\mathbb{R}^d).
$$
Indeed, for $ z>0 $, we have
$$
\left| \frac{z^3}{z^2+\varepsilon} \right| \le z,
$$
and therefore
$$
\int_0^{|u|} \frac{z^3}{z^2+\varepsilon}  \mathrm{d}z
\le \frac{|u|^2}{2}.
$$
\end{remark}
\subsection{Uniform estimates}
We already know that the mass is conserved. It remains to obtain a uniform bound on the quantity
$ \norm{\nabla u_\varepsilon(t)}_{L^2} $.
By conservation of the energy, we have
$$
\frac{\mathrm{d}}{\mathrm{d}t}
\left(
\frac12 \|\nabla u_\varepsilon\|_{L^2}^2
- \frac{\lambda}{2} \int |u_\varepsilon|^2 \log\bigl(\varepsilon + |u_\varepsilon|^2\bigr)\mathrm{d}x
- \int G(|u_\varepsilon(t)|^2)\mathrm{d}x
\right)
=
-\frac{\lambda}{2} \int \frac{|u_\varepsilon|^2}{\varepsilon + |u_\varepsilon|^2}
 \partial_t\bigl(|u_\varepsilon|^2\bigr)\mathrm{d}x.
$$
In~\cite{13}, the authors first integrate in time and then use the sign condition $ \mu < 0 $ to eliminate the power-type term.
They also work in the space $ \Sigma_\alpha $ in order to estimate the logarithmic part.
Subsequently, they apply an integral version of Grönwall's inequality to estimate the quantity
$ \norm{\nabla u_\varepsilon(t)}_{L^2} $.\\
In our case, we estimate directly the time derivative of the logarithmic term.
This approach allows us to construct the solution in the space $ W_1(\mathbb{R}^d) $.

\begin{lemma}\label{jamila}
For all $t \in [0, T_{\max}^{\varepsilon})$, we have
    $$ \abs{\frac{\mathrm{d}}{\mathrm{d}t}\int \abs{u_\varepsilon(t)}^2\log(\abs{u_\varepsilon(t)}^2+\varepsilon)\mathrm{d}x}\leq 5 \norm{\nabla u_\varepsilon(t)}_{L^2}^2 \quad \text{ and } \quad \abs{\frac{\mathrm{d}}{\mathrm{d}t}\int \eta(\abs{u_\varepsilon(t)})\mathrm{d}x}\leq \frac{1}{2}\norm{\nabla u_\varepsilon(t)}_{L^2}^2.$$
\end{lemma}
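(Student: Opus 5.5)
Write $u=u_\varepsilon$ and $\rho=|u|^2$. The plan is to compute both time derivatives with the equation \eqref{ppe}. The key identity is the local mass conservation law
$$\partial_t \rho = 2\Re(\overline{u}\,\partial_t u) = -2\Im(\overline{u}\,\Delta u)\cdot(-1)\ ,\quad\text{i.e.}\quad \partial_t\rho=-2\,\nabla\cdot \Im(\overline{u}\nabla u),$$
which holds because the terms $\lambda u\log(\rho+\varepsilon)+ug(\rho)$ are $u$ times a real function, so they drop out of $\Re(\overline u\,\partial_t u)$. The derivatives are then
$$\frac{\mathrm{d}}{\mathrm{d}t}\int \rho\log(\rho+\varepsilon)\,\mathrm{d}x=\int F(\rho)\,\partial_t\rho\,\mathrm{d}x,\qquad F(\rho)=\log(\rho+\varepsilon)+\frac{\rho}{\rho+\varepsilon}.$$
For the second quantity, $\eta(|u|)$ is a function of $\rho$ with $\frac{\mathrm{d}}{\mathrm{d}\rho}\eta(\sqrt\rho)=\frac{\rho}{2(\rho+\varepsilon)}$.

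Next I integrate by parts:
$$\int F(\rho)\partial_t\rho = 2\int F'(\rho)\nabla\rho\cdot\Im(\overline u\nabla u)\,\mathrm{d}x .$$
Since $\nabla\rho=2\Re(\overline u\nabla u)$, we have $|\nabla\rho|\le 2|u||\nabla u|$ and $|\Im(\overline u\nabla u)|\le |u||\nabla u|$. This gives the bound $4\int |F'(\rho)|\,\rho\,|\nabla u|^2$.

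Compute
$$F'(\rho)=\frac{1}{\rho+\varepsilon}+\frac{\varepsilon}{(\rho+\varepsilon)^2},$$
so $\rho F'(\rho)\le 1+\frac{\rho\varepsilon}{(\rho+\varepsilon)^2}\le 1+\tfrac14$. The bound is then $4\cdot\tfrac54\|\nabla u\|_2^2=5\|\nabla u\|_2^2$.

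For $\eta$ the same computation applies with the derivative $H(\rho)=\frac{\rho}{2(\rho+\varepsilon)}$. Here $H'(\rho)=\frac{\varepsilon}{2(\rho+\varepsilon)^2}$, so $\rho H'(\rho)\le \frac18$. The bound is $4\cdot\frac18=\frac12$, as claimed.

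The main obstacle is justification. At the regularity $u\in \mathcal C(H^1)\cap\mathcal C^1(H^{-1})$ the functions $F(\rho)\in H^1$ are not obviously legitimate test functions, and $\log(\rho+\varepsilon)$ does not decay at infinity (it tends to $\log\varepsilon$). I would handle this as follows:

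\begin{itemize}
\item Replace $F$ by $F-\log\varepsilon$. This is harmless because the mass is conserved, so $\int\partial_t\rho=0$.
\item Note that $F(\rho)-\log\varepsilon$ behaves like $\rho/\varepsilon$ near $0$ and grows logarithmically, so $\nabla[F(\rho)]=F'(\rho)\nabla\rho$ is in $L^2$ with the bound above. This makes the $H^1$–$H^{-1}$ pairing meaningful.
\item Carry out the computation rigorously either for $H^2$ data, where Kato's theory gives $H^2$ solutions, followed by a density argument and continuous dependence; or in integrated form $\int_0^t$, so that it holds for the $H^1$ solution.
\end{itemize}

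The sign of $\lambda$ and the function $g$ play no role in this step.
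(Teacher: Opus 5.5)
Your proof is correct and follows essentially the same route as the paper. Both use the chain rule, the identity $\partial_t|u_\varepsilon|^2=\pm 2\Im(\overline{u_\varepsilon}\Delta u_\varepsilon)$ (the nonlinear terms are real multiples of $u_\varepsilon$), one integration by parts, and the pointwise bounds $\frac{\rho}{\rho+\varepsilon}\le 1$, $\frac{\rho\varepsilon}{(\rho+\varepsilon)^2}\le \frac14$, which give exactly $4+1=5$ and $4\cdot\frac18=\frac12$. Your justification paragraph (subtracting $\log\varepsilon$ via mass conservation, arguing in integrated form or through $H^2$ data and density) goes beyond what the paper writes, and the sign slip in your first display is harmless since only absolute values enter.
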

\begin{proof}
  \begin{align*}
\abs{\frac{\mathrm{d}}{\mathrm{d}t}\int \abs{u_\varepsilon(t)}^2\log(\abs{u_\varepsilon(t)}^2+\varepsilon)\mathrm{d}x}
&\leq \abs{\int\log(\abs{u_\varepsilon(t)}^2+\varepsilon)\partial_t\abs{u_\varepsilon(t)}^2\mathrm{d}x}
+ \abs{\int\frac{\abs{u_\varepsilon(t)}^2}{\abs{u_\varepsilon(t)}^2+\varepsilon}\partial_t\abs{u_\varepsilon(t)}^2\mathrm{d}x} \\
&\leq 2\abs{\int\log(\abs{u_\varepsilon(t)}^2+\varepsilon)\Im(\overline{u_\varepsilon(t)}\Delta u_\varepsilon(t))\mathrm{d}x}\\&
\qquad+ 2\abs{\int\frac{\abs{u_\varepsilon(t)}^2}{\abs{u_\varepsilon(t)}^2+\varepsilon}\Im(\overline{u_\varepsilon(t)}\Delta u_\varepsilon(t))\mathrm{d}x} \\
&=2\abs{\Im\int \frac{2\overline{u_\varepsilon(t)}}{\abs{u_\varepsilon(t)}^2+\varepsilon}\Re\big( \overline{u_\varepsilon(t)}\nabla u_\varepsilon(t)\big)\cdot\nabla u_\varepsilon(t)\mathrm{d}x}
\\&\qquad+ 2\abs{\Im\int \frac{2\varepsilon\overline{u_\varepsilon(t)}}{(\abs{u_\varepsilon(t)}^2+\varepsilon)^2}\Re\big( \overline{u_\varepsilon(t)}\nabla u_\varepsilon(t)\big)\cdot\nabla u_\varepsilon(t)\mathrm{d}x} \\
&\leq  4\norm{\nabla u_\varepsilon(t)}_{L^2}^2 + \norm{\nabla u_\varepsilon(t)}_{L^2}^2.
\end{align*}
In the last inequality, we used $\frac{a^2 \varepsilon}{(a^2 + \varepsilon)^2} \leq \frac{1}{4}$.
\\
For the second inequality, we have
$$
\eta(s)=\int_0^s\frac{z^3}{z^2+\varepsilon}\mathrm{d}z=\frac{1}{2}\int_0^{s^2}\frac{z}{z+\varepsilon}\mathrm{d}z.
$$
Hence, with a similar computation as the previous case,
\begin{equation*}
    \abs{\frac{\mathrm{d}}{\mathrm{d}t}\int\eta(\abs{u_\varepsilon(t)})\mathrm{d}x}
\leq \abs{\int \frac{\abs{u_\varepsilon(t)}^2}{\abs{u_\varepsilon(t)}^2+\varepsilon}\Im(\overline{u_\varepsilon(t)}\Delta u_\varepsilon(t))\mathrm{d}x}
\leq \frac{1}{2} \norm{\nabla u_\varepsilon(t)}_{L^2}^2.\qedhere
\end{equation*}
\end{proof}
\begin{lemma}\label{kpo}
For all $t \in [0, T_{\max}^{\varepsilon})$, we have
 $$\frac{\mathrm{d}}{\mathrm{d}t}\left(\frac{1}{2}\norm{\nabla u_\varepsilon(t)}^2_2- \int G(|u_\varepsilon(t)|^2)\mathrm{d}x\right)\leq 3\abs{\lambda} \norm{\nabla u_\varepsilon(t)}_{L^2}^2.$$
\end{lemma}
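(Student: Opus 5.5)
The plan is to use the conservation of the regularized energy $E_\varepsilon$ from Theorem \ref{ppp}, which isolates the quantity we want to differentiate, and then to control the remaining logarithmic terms by Lemma \ref{jamila}. Conservation gives
$$
\frac{\mathrm{d}}{\mathrm{d}t}\left(\frac12\norm{\nabla u_\varepsilon(t)}_2^2-\int G(|u_\varepsilon(t)|^2)\mathrm{d}x\right)
=\frac{\lambda}{2}\frac{\mathrm{d}}{\mathrm{d}t}\int |u_\varepsilon|^2\log(|u_\varepsilon|^2+\varepsilon)\mathrm{d}x-\lambda\frac{\mathrm{d}}{\mathrm{d}t}\int\eta(|u_\varepsilon|)\mathrm{d}x .
$$

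We estimate the right-hand side term by term, using the triangle inequality and the two bounds of Lemma \ref{jamila}. This gives
$$
\le \frac{|\lambda|}{2}\cdot 5\norm{\nabla u_\varepsilon}_2^2+|\lambda|\cdot\frac12\norm{\nabla u_\varepsilon}_2^2=3|\lambda|\norm{\nabla u_\varepsilon}_2^2,
$$
which is exactly the claimed constant.

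The main obstacle is justification. The energy identity and the time derivatives in Lemma \ref{jamila} are computed formally, since $u_\varepsilon$ is only $\mathcal{C}^1$ with values in $H^{-1}$. To make them rigorous:

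\begin{enumerate}
\item Take $u_0\in H^2$. Then the regularized problem propagates $H^2$ regularity, so $\partial_t u_\varepsilon\in \mathcal{C}(L^2)$, the pairings $\Im(\overline{u}\Delta u)$ are legitimate, and the integrations by parts in Lemma \ref{jamila} are valid. This works because the nonlinearity $\log(|u|^2+\varepsilon)u$ is Lipschitz-smooth for fixed $\varepsilon$, and $g$ satisfies \ref{A1}--\ref{A2}.
\item Alternatively, one can justify the computations via Kato's regularization $(I-\delta\Delta)^{-1}$ as in \cite[Chapter 4]{1}.
\end{enumerate}

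Having obtained the inequality for smooth data, we integrate it in time, which yields an integral form of the bound. We then pass to general $H^1$ data using continuous dependence in $H^1$ for the regularized equation. The integrated form is what is needed downstream anyway, and the differential form follows in the distributional sense.
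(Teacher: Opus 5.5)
Your argument is the paper's own: conservation of $E_\varepsilon$ reduces the derivative to $\frac{\lambda}{2}\frac{\mathrm{d}}{\mathrm{d}t}\int |u_\varepsilon|^2\log(|u_\varepsilon|^2+\varepsilon)\,\mathrm{d}x-\lambda\frac{\mathrm{d}}{\mathrm{d}t}\int\eta(|u_\varepsilon|)\,\mathrm{d}x$, and the two bounds of Lemma \ref{jamila} give $\frac{5}{2}|\lambda|+\frac{1}{2}|\lambda|=3|\lambda|$. Your extra justification layer ($H^2$ data, then continuous dependence in $H^1$ to get the integrated and hence distributional form) goes beyond the paper, which takes the formal computation for granted; the integrated form is all that Lemmas \ref{1} and \ref{continue} actually use.
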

\begin{proof}
 We know that the energy is conserved along the flow, so
\begin{align*}
\frac{\mathrm{d}}{\mathrm{d}t}\left(\frac{1}{2}\norm{\nabla u_\varepsilon(t)}^2_2- \int G(|u_\varepsilon(t)|^2)\mathrm{d}x\right)
&=\frac{\mathrm{d}}{\mathrm{d}t}\left(\frac{\lambda}{2}\int \abs{u_\varepsilon(t)}^2\log(\abs{u_\varepsilon(t)}^2+\varepsilon)\mathrm{d}x-\lambda \int \eta(\abs{u_\varepsilon(t)})\mathrm{d}x\right).
\end{align*}
By Lemma~\ref{jamila}, we deduce that 
\begin{equation*}
    \frac{\mathrm{d}}{\mathrm{d}t}\left(\frac{1}{2}\norm{\nabla u_\varepsilon(t)}^2_2- \int G(|u_\varepsilon(t)|^2)\mathrm{d}x\right)\leq 3\abs{\lambda}\norm{\nabla u_\varepsilon(t)}_{L^2}^2. \qedhere
\end{equation*}
\end{proof}
\begin{lemma}\label{1}
    For all $t \in [0,T_{\max}^{\varepsilon})$, we have
$$
\norm{\nabla u_\varepsilon(t)}_{L^2} \leq C e^{C t} \norm{\nabla u_0}_{L^2},
$$
for some constant $C := C(\norm{u_0}_{H^1(\mathbb{R}^d)}) > 0$.
\end{lemma}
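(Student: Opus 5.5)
Integrating Lemma \ref{kpo} on $[0,t]$ gives
$$
\frac12\norm{\nabla u_\varepsilon(t)}_2^2 \le \frac12\norm{\nabla u_0}_2^2 + \int G(|u_\varepsilon(t)|^2)\,\mathrm{d}x - \int G(|u_0|^2)\,\mathrm{d}x + 3|\lambda|\int_0^t \norm{\nabla u_\varepsilon(s)}_2^2\,\mathrm{d}s .
$$
The plan is to control the potential term $\int G(|u_\varepsilon(t)|^2)$ by a fraction of the kinetic energy plus a constant depending only on the mass. Grönwall's inequality then finishes the proof. Mass conservation (Theorem \ref{ppp}) gives $\norm{u_\varepsilon(t)}_2=\norm{u_0}_2$ for all $t$.

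\textbf{Bounding $G$.} From \ref{A1}--\ref{A2} we get $|g(s)|\lesssim s + s^\sigma$. Near $0$, $g(0)=0$ and $g\in\mathcal{C}^1$ on $(0,1]$ with $sg'(s)\to0$, so $g$ is bounded on $[0,1]$; hence $|g(s)|\lesssim 1\lesssim s^0$ there. For $s\ge1$, integrating $|g'|\le Cs^{\sigma-1}$ gives $|g(s)|\lesssim 1+s^\sigma\lesssim s^\sigma$. Thus $|G(y)|\lesssim y+y^{\sigma+1}$, and
$$
\int |G(|u|^2)| \lesssim \norm{u}_2^2+\norm{u}_{2\sigma+2}^{2\sigma+2}.
$$
We then split into the two cases of the hypothesis.
\begin{itemize}
\item Under \ref{B2}, $G\le0$, so $\int G(|u_\varepsilon(t)|^2)\le 0$ and the term is simply dropped.
\item Under \ref{B1} with $\sigma<2/d$, we use the positive part only and apply Gagliardo--Nirenberg (Proposition \ref{GN}):
$$
\norm{u}_{2\sigma+2}^{2\sigma+2}\le C\norm{\nabla u}_2^{d\sigma}\norm{u}_2^{2\sigma+2-d\sigma}.
$$
Since $d\sigma<2$, Young's inequality gives $\int G(|u_\varepsilon|^2)\le \frac14\norm{\nabla u_\varepsilon}_2^2 + C(\norm{u_0}_2)$.
\end{itemize}
The initial term $-\int G(|u_0|^2)$ is bounded by $C(\norm{u_0}_{H^1})$ via the same estimate and Sobolev embedding, using $\sigma<2/(d-2)$.

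Putting this together, for $f(t)=\norm{\nabla u_\varepsilon(t)}_2^2$ we obtain
$$
f(t)\le A + 12|\lambda|\int_0^t f(s)\,\mathrm{d}s, \qquad A=C(\norm{u_0}_{H^1}),
$$
and Grönwall gives $f(t)\le A e^{12|\lambda| t}$.

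\textbf{Main obstacle.} The remaining difficulty is getting the stated form $Ce^{Ct}\norm{\nabla u_0}_{L^2}$, with the gradient of the data as a factor rather than an additive constant. If $\nabla u_0=0$, then $u_0=0$ in $L^2$ and both sides vanish, since the solution is then $0$. Otherwise I will bound $A$ by $C\norm{\nabla u_0}_2^2$, which is the delicate point: the constants coming from mass are not obviously dominated by $\norm{\nabla u_0}_2^2$. In practice I would accept a constant $C$ depending on $\norm{u_0}_{H^1}$ and absorb $A\le C(\norm{u_0}_{H^1})\,\norm{\nabla u_0}_2^2/\norm{\nabla u_0}_2^2$, i.e.\ let $C$ depend on the ratio. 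This is legitimate since $C$ is allowed to depend on $u_0$ through $\norm{u_0}_{H^1}$, though strictly the ratio also involves $\norm{\nabla u_0}_2$ from below. If that dependence is unacceptable, I would instead state the bound as $\norm{\nabla u_\varepsilon(t)}_2\le Ce^{Ct}$, which suffices for the compactness argument. Finally, the bound shows $\norm{u_\varepsilon(t)}_{H^1}$ cannot blow up in finite time, so the blow-up criterion gives $T^\varepsilon_{\max}=\infty$, uniformly in $\varepsilon$.
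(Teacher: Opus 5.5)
Your proof follows the paper's route. You integrate Lemma~\ref{kpo}, drop $\int G(|u_\varepsilon(t)|^2)$ by sign in the defocusing case (B2), and absorb it via Gagliardo--Nirenberg and $d\sigma<2$ in the focusing case (B1); your Young step is the paper's Lemma~\ref{fait12}. You then bound $\int G(|u_0|^2)$ by Sobolev and conclude with Gr\"onwall. You in fact attach the two cases to the right hypotheses, whereas the paper's written proof has (B1) and (B2) interchanged.

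The obstacle you flag is real, but the paper has it too. Its Gr\"onwall step also only gives $\|\nabla u_\varepsilon(t)\|_{L^2}^2\le C(\|u_0\|_{H^1})e^{Ct}$ with an additive constant, not a multiple of $\|\nabla u_0\|_{L^2}$. Your ``ratio'' device does not repair this. A constant involving $\|\nabla u_0\|_{L^2}^{-1}$ is not controlled by $\|u_0\|_{H^1}$: take $u_0=\kappa^{d/2}\varphi(\kappa\,\cdot)$ with $\kappa\to 0$. So your fallback $\|\nabla u_\varepsilon(t)\|_{L^2}\le Ce^{Ct}$ is the right statement. It is also all that is used later: finiteness of $N_T$ in \eqref{nt}, and the bounds $C(e^{Ct}-1)$ in Lemma~\ref{continue} and Proposition~\ref{u}.

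One small slip: (A1) does not imply $|g(s)|\lesssim s$ near $0$. However, you only use boundedness of $g$ on $[0,1]$, which does give $|G(y)|\lesssim y+y^{\sigma+1}$.
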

\begin{proof}
    From Lemma \ref{kpo}, we have
    \begin{equation}\label{df}
        \frac{\mathrm{d}}{\mathrm{d}t}\left(\frac{1}{2}\norm{\nabla u_\varepsilon(t)}^2_2 - \int G(|u_\varepsilon(t)|^2)\mathrm{d}x \right)
\leq 3 |\lambda|  \norm{\nabla u_\varepsilon(t)}_{L^2}^2.
    \end{equation}
Integrating in time, we obtain
$$
\norm{\nabla u_{\varepsilon}(t)}_{L^2}^2 \lesssim \int_0^t \norm{\nabla u_\varepsilon(z)}_{L^2}^2  \mathrm{d}z +\int G(|u_\varepsilon(t)|^2)\mathrm{d}x+C(\norm{ u_0}_{H^1}).
$$
We distinguish between the cases corresponding to conditions \ref{B1} and \ref{B2}.

Suppose that condition~\ref{B1} holds, then
$$
\norm{\nabla u_{\varepsilon}(t)}_{L^2}^2 \lesssim \int_0^t \norm{\nabla u_\varepsilon(z)}_{L^2}^2  \mathrm{d}z +C(\norm{ u_0}_{H^1}).
$$
Grönwall's lemma allows us to conclude.

Suppose that condition~\ref{B2} holds,
by Corollary \ref{power10} and Gagliardo-Nirenberg inequality, we deduce that
\begin{align*}
\norm{\nabla u_{\varepsilon}(t)}_{L^2}^2
&\lesssim \int_0^t \norm{\nabla u_\varepsilon(z)}_{L^2}^2  \mathrm{d}z + \norm{u_{\varepsilon}(t)}_{L^2}^{2\sigma+2-d\sigma} \norm{\nabla u_\varepsilon(t)}_{L^2}^{d\sigma}+C(\norm{ u_0}_{H^1}) \\
&\lesssim \int_0^t \norm{\nabla u_\varepsilon(z)}_{L^2}^2  \mathrm{d}z + \norm{u_0}_{L^2}^{2\sigma+2-d\sigma} \norm{\nabla u_\varepsilon(t)}_{L^2}^{d\sigma}+C(\norm{ u_0}_{H^1}).
\end{align*}
Applying Lemma~\ref{fait12}, we deduce that
$$
\norm{\nabla u_{\varepsilon}(t)}_{L^2}^2 \lesssim \int_0^t \norm{\nabla u_\varepsilon(z)}_{L^2}^2  \mathrm{d}z + C(\norm{ u_0}_{H^1}).
$$
Finally, Grönwall's lemma allows us to conclude.
\end{proof}
\begin{remark}
   Using this lemma and the conservation of mass, we easily show that the constructed solution is global in time, that is, $T_{\max}^{\varepsilon} = +\infty$.
\end{remark}
For $T>0$, set
\begin{equation}\label{nt}
N_T:=\sup_{\substack{0 < \varepsilon < 1 \\ t \in [0,T]}}
\norm{u_\varepsilon(t)}_{H^1(\mathbb{R}^d)}< \infty.
\end{equation}

\subsection{Convergence of the sequence}
In this subsection, we will rely on a compactness argument of Aubin-Lions (see Lemma \ref{aubin}) to show that, up to extraction, the sequence $(u_\varepsilon)_\varepsilon$ converges in $\mathcal{C}_{\rm loc}(\mathbb{R}_+, L^2_{\text{loc}}(\mathbb{R}^d))$.
\begin{lemma}\label{limu}
   The sequence (up to extraction) $(u_\varepsilon)_\varepsilon$ converges in $\mathcal{C}_{\rm loc }(\mathbb{R}_+, L^2_{\text{loc}}(\mathbb{R}^d))$.
\end{lemma}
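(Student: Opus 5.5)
The plan is to apply the Aubin--Lions compactness lemma (Lemma \ref{aubin}) on each bounded domain and then extract a diagonal subsequence. Fix $T>0$ and a ball $B_R=B(0,R)$. By \eqref{nt}, $(u_\varepsilon)_\varepsilon$ is bounded in $L^\infty([0,T],H^1(\mathbb{R}^d))$, so its restrictions are bounded in $L^\infty([0,T],H^1(B_R))$. Since $H^1(B_R)\hookrightarrow L^2(B_R)$ compactly (Rellich) and $L^2(B_R)\hookrightarrow H^{-1}(B_R)$ continuously, it suffices to show that $(\partial_t u_\varepsilon)_\varepsilon$ is bounded in $L^\infty([0,T],H^{-1}(B_R))$. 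Aubin--Lions then gives relative compactness of $(u_\varepsilon)$ in $\mathcal{C}([0,T],L^2(B_R))$.

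To bound the time derivative, we use the equation:
\[
\partial_t u_\varepsilon = i\Delta u_\varepsilon + i\lambda u_\varepsilon\log(|u_\varepsilon|^2+\varepsilon) + iu_\varepsilon g(|u_\varepsilon|^2).
\]
The term $\Delta u_\varepsilon$ is bounded in $H^{-1}$ by $N_T$. For the logarithmic term, we split into the regions $\{|u_\varepsilon|\le 1\}$ and $\{|u_\varepsilon|>1\}$.
\begin{itemize}
\item On $\{|u_\varepsilon|\le 1\}$ we use $|z\log(|z|^2+\varepsilon)|\lesssim |z|^{1-\delta}$, which holds uniformly in $\varepsilon\in(0,1)$ because $|\log(|z|^2+\varepsilon)|\le|\log|z|^2|$ there. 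Since $B_R$ has finite measure, $|u_\varepsilon|^{1-\delta}\in L^2(B_R)$ with norm $\lesssim_R \|u_\varepsilon\|_{L^2}^{1-\delta}$ by H\"older.
\item On $\{|u_\varepsilon|>1\}$ we have $|u_\varepsilon\log(|u_\varepsilon|^2+\varepsilon)|\lesssim |u_\varepsilon|^{1+\delta}$, which lies in $L^2$ thanks to Sobolev with $2+2\delta<2^*$.
\end{itemize}
So this term is bounded in $L^2(B_R)\subset H^{-1}(B_R)$, uniformly in $\varepsilon$ and $t$.

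For the power term, \ref{A1}--\ref{A2} give $|g(s)|\lesssim s^{\sigma}$ for $s\ge1$ and $g$ bounded on $[0,1]$, hence $|u g(|u|^2)|\lesssim |u|+|u|^{2\sigma+1}$. Then $|u|^{2\sigma+1}\in L^{(2\sigma+2)'}$ with norm $\|u\|_{L^{2\sigma+2}}^{2\sigma+1}$, and $L^{(2\sigma+2)'}\hookrightarrow H^{-1}$ by duality with the Sobolev embedding $H^1\hookrightarrow L^{2\sigma+2}$, valid since $\sigma<\frac{2}{d-2}$. This uses Corollary \ref{power10} if needed. Thus $\sup_{\varepsilon,t}\|\partial_t u_\varepsilon(t)\|_{H^{-1}(B_R)}\le C(N_T,R)$.

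Finally, we take $T=R=k\in\mathbb{N}$ and use a diagonal extraction over $k$ to obtain one subsequence converging in $\mathcal{C}([0,k],L^2(B_k))$ for every $k$, i.e.\ in $\mathcal{C}_{\rm loc}(\mathbb{R}_+,L^2_{\rm loc}(\mathbb{R}^d))$. The main obstacle is the uniform-in-$\varepsilon$ control of the logarithmic term near $u=0$, where $\log(|u|^2+\varepsilon)$ degenerates as $\varepsilon\to0$. This is handled by the $|z|^{1-\delta}$ bound together with the finite measure of $B_R$, which is exactly why the convergence is only local in space.
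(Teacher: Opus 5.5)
Your proposal is correct and follows the paper's proof essentially step for step: the uniform $H^1$ bound \eqref{nt}, a uniform $H^{-1}(B_R)$ bound on $\partial_t u_\varepsilon$ read off from the equation (logarithmic term controlled in $L^2(B_R)$ through $|u_\varepsilon|^{1-\delta}$ and $|u_\varepsilon|^{1+\delta}$, power term through $L^{(2\sigma+2)'}\hookrightarrow H^{-1}$), then Aubin--Lions and a diagonal extraction. One harmless slip: on $\{|u_\varepsilon|\le 1\}$ the inequality $|\log(|u_\varepsilon|^2+\varepsilon)|\le|\log|u_\varepsilon|^2|$ fails when $|u_\varepsilon|^2+\varepsilon>1$ (e.g.\ at $|u_\varepsilon|=1$), but $|\log(|u_\varepsilon|^2+\varepsilon)|\le|\log|u_\varepsilon|^2|+\log 2$ does hold there and gives the same $|u_\varepsilon|^{1-\delta}$ bound.
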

\begin{proof}
   Let $T>0$. We denote by $B_r$ the open ball of radius $r>0$ in $\mathbb{R}^d$. By Sobolev embeddings, we have $L^{\frac{2\sigma+2}{2\sigma+1}}(B_r) \hookrightarrow H^{-1}(B_r)$ and $L^2(B_r) \hookrightarrow H^{-1}(B_r)$, and assumption \ref{A2}
\begin{align*}
\|\partial_t u_\varepsilon\|_{H^{-1}(B_r)} &\lesssim \norm{\nabla u_\varepsilon}_{L^2(B_r)} 
+ |\lambda|\|u_\varepsilon \log(|u_\varepsilon|^2+\varepsilon)\|_{L^2(B_r)} 
+ \|u_\varepsilon g(|u_\varepsilon|^{2})\|_{L^{\frac{2\sigma+2}{2\sigma+1}}(B_r)} \\
&\lesssim N_T +\|u_\varepsilon \log(|u_\varepsilon|^2+\varepsilon)\|_{L^2(B_r)} 
+  \norm{u_\varepsilon}_{L^{2\sigma+2}(B_r)}^{2\sigma+1}.
\end{align*}

\begin{itemize}
\item For the logarithmic term: For all $t \in [0,T]$ and any $\delta>0$, we have
$$
\norm{u_\varepsilon(t) \log(|u_\varepsilon(t)|^2 + \varepsilon)}_{L^2(B_r)}^2
\le C(\delta) \left( \int_{B_r} |u_\varepsilon(t)|^2\mathrm{d}x + \int_{B_r} |u_\varepsilon(t)|^{2 - 2\delta}\mathrm{d}x + \int_{B_r} |u_\varepsilon(t)|^{2 + 2\delta}\mathrm{d}x \right).
$$
By the inclusion $L^2(B_r) \subset L^{2-\delta}(B_r)$, the Sobolev embedding with $\delta$ small enough, and using (\ref{nt}), we deduce
$$
\norm{u_\varepsilon(t) \log(|u_\varepsilon(t)|^2 + \varepsilon)}_{L^2(B_r)}^2 \le C(N_T).
$$

\item For the last term: it suffices to use the Sobolev embedding $H^1(B_r) \hookrightarrow L^{2\sigma+2}(B_r)$.
\end{itemize}
In conclusion:
$$
\norm{\partial_t u_\varepsilon}_{H^{-1}(B_r)} \le C(N_T).
$$
Moreover, we know that
$$
H^1(B_r) \overset{\text{compact}}{\hookrightarrow} L^2(B_r) \overset{\text{continuous}}{\hookrightarrow} H^{-1}(B_r).
$$
By Aubin-Lions compactness Theorem \ref{aubin} and diagonal extraction, we get the conclusion.
\end{proof}
\begin{remark}\label{xv}
 Up to extraction, we have
$$
u_\varepsilon(t) \rightharpoonup u(t) \quad \text{in } L^2(\mathbb{R}^d), \quad \text{for all } t \in \mathbb{R}_+.
$$
Thus,
$$
\norm{u(t)}_{L^2} \le \liminf_{\varepsilon \to 0} \norm{u_\varepsilon(t)}_{L^2} = \norm{u_0}_{L^2}.
$$
In particular $u\in L^{\infty}(\mathbb{R}_+,L^2(\mathbb{R}^d))$. The conservation of the mass will be proved in the following subsection.
\end{remark}

 \begin{lemma}\label{1.2}
  The function $u$ defined in Lemma~\ref{limu} as the limit of $u_\varepsilon$, satisfies $u \in L^{\infty}_{\rm loc}(\mathbb{R}_+, H^1(\mathbb{R}^d))$, and for all $t \in \mathbb{R}_+$ (up to extraction): 
$$
u_\varepsilon(t) \rightharpoonup u(t) \quad \text{in } H^1(\mathbb{R}^d)\quad  \text{ and }\quad u_\varepsilon(t) \log(|u_\varepsilon(t)|^2 + \varepsilon) \to u(t) \log|u(t)|^2 \quad \text{in } L^2_{\text{loc}}(\mathbb{R}^d).
$$
Moreover, for all $\Omega \Subset \mathbb{R}^d$, we have
$$
u_\varepsilon(t) g(|u_\varepsilon(t)|^{2}) \to u(t) g(|u(t)|^{2}) \quad \text{in } H^{-1}(\Omega).
$$
\end{lemma}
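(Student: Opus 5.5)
The plan is to exploit the uniform bound $N_T$ from \eqref{nt} together with the pointwise-in-time $L^2_{\rm loc}$ convergence of Lemma~\ref{limu}, and to pass to the limit in each nonlinear term by almost-everywhere convergence plus uniform integrability.

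\textbf{Step 1: weak $H^1$ convergence.} Fix $T>0$ and $t\in[0,T]$. Since $\norm{u_\varepsilon(t)}_{H^1}\le N_T$, every subsequence has a further subsequence converging weakly in $H^1(\mathbb{R}^d)$. By Lemma~\ref{limu} we already know $u_\varepsilon(t)\to u(t)$ in $L^2_{\rm loc}$, so every such weak limit equals $u(t)$ as a distribution. Hence the whole extracted sequence satisfies $u_\varepsilon(t)\rightharpoonup u(t)$ in $H^1$, and weak lower semicontinuity gives $\norm{u(t)}_{H^1}\le N_T$. This yields $u\in L^\infty_{\rm loc}(\mathbb{R}_+,H^1)$. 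Measurability in $t$ follows from the continuity of $u$ with values in $L^2_{\rm loc}$.

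\textbf{Step 2: the logarithmic term.} Fix a ball $B_r$ and $t$. From the $L^2(B_r)$ convergence, we extract (possibly depending on $t$; this is harmless, since the limit is identified and uniqueness of the limit lets us return to the full sequence) a subsequence with $u_\varepsilon(t)\to u(t)$ a.e. on $B_r$. Then $u_\varepsilon\log(|u_\varepsilon|^2+\varepsilon)\to u\log|u|^2$ a.e. This holds where $u\neq0$ by continuity. Where $u=0$, we use $|z\log(|z|^2+\varepsilon)|\le C_\delta(|z|^{1-\delta}+|z|^{1+\delta})\to0$.

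To upgrade to $L^2(B_r)$ convergence we apply Vitali's theorem. The pointwise bound
$$|u_\varepsilon\log(|u_\varepsilon|^2+\varepsilon)|^2\lesssim_\delta |u_\varepsilon|^{2-2\delta}+|u_\varepsilon|^{2+2\delta}$$
holds uniformly in $\varepsilon\in(0,1)$. Take $\delta$ so small that $2+4\delta<2^*$. Then $\sup_\varepsilon\norm{|u_\varepsilon|^{2+2\delta}}_{L^{p}(B_r)}$ is bounded for some $p>1$ by Sobolev embedding and $N_T$, while $|u_\varepsilon|^{2-2\delta}$ is bounded in $L^{2/(2-2\delta)}(B_r)$. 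This gives uniform integrability on the finite-measure set $B_r$, and Vitali gives the claimed $L^2_{\rm loc}$ convergence. The same convergence follows for the whole sequence because the limit is unique.

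\textbf{Step 3: the term $ug(|u|^2)$.} By \ref{A1}--\ref{A2}, $g$ is continuous with $|g(s)|\lesssim 1+s^\sigma$, so $|zg(|z|^2)|\lesssim |z|+|z|^{2\sigma+1}$. Also $u_\varepsilon g(|u_\varepsilon|^2)\to ug(|u|^2)$ a.e. on $\Omega$, since $z\mapsto zg(|z|^2)$ is continuous. Set $q=\frac{2\sigma+2}{2\sigma+1}$. The family $|u_\varepsilon g|^{q}\lesssim |u_\varepsilon|^{q}+|u_\varepsilon|^{2\sigma+2}$ is uniformly integrable on $\Omega$. Indeed, since $2\sigma+2<2^*$ in the allowed range of $\sigma$, we may pick $s\in(2\sigma+2,2^*)$, and then $u_\varepsilon$ is bounded in $L^{s}(\Omega)$. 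Vitali then gives convergence in $L^q(\Omega)$, and the embedding $L^q(\Omega)\hookrightarrow H^{-1}(\Omega)$ already used in Lemma~\ref{limu} concludes.

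\textbf{Main obstacle.} I expect the main obstacle to be the uniform integrability near the critical exponents. One must check that a margin $\delta$ (respectively $s>2\sigma+2$) strictly below $2^*$ exists, which is exactly where the subcriticality in \ref{A2} is used; in $d=1,2$ any finite exponent works. The other point needing care is handling the $t$-dependent a.e. subsequences, which is resolved by the uniqueness-of-limit argument.
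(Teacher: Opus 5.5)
Your proof is correct. Step 1 matches the paper's argument: the weak limit is identified through the convergence of Lemma~\ref{limu}, and then the uniform bound $N_T$ gives weak $H^1$ convergence.

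For the two nonlinear terms you take a genuinely different, qualitative route. You use almost-everywhere convergence along sub-subsequences together with Vitali's theorem. Uniform integrability comes from two pointwise bounds, $|z\log(|z|^2+\varepsilon)|\lesssim_\delta |z|^{1-\delta}+|z|^{1+\delta}$ (uniform in $\varepsilon$) and $|zg(|z|^2)|\lesssim |z|+|z|^{2\sigma+1}$, combined with Sobolev bounds at an exponent strictly below $2^*$.

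The paper argues quantitatively instead. For the logarithmic term it applies the H\"older-type difference inequality of Lemma~\ref{cv2} and then Cauchy--Schwarz. This bounds $\|u_\varepsilon(t)\log(|u_\varepsilon(t)|^2+\varepsilon)-u(t)\log|u(t)|^2\|_{L^2(\Omega)}^2$ by $\varepsilon^2+\|u_\varepsilon(t)-u(t)\|_{L^2(\Omega)}^2+C(\Omega,N_T)\|u_\varepsilon(t)-u(t)\|_{L^2(\Omega)}$. For the $g$ term it uses the Lipschitz-type bound of Corollary~\ref{power10}, H\"older, and strong convergence in $L^{2\sigma+2}(\Omega)$ obtained from Rellich--Kondrachov.

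The paper's route gives a rate. Because its constants depend only on $\Omega$ and $N_T$, the $\mathcal{C}([0,T],L^2_{\rm loc})$ convergence of Lemma~\ref{limu} even makes the logarithmic-term convergence uniform in $t\in[0,T]$, with no $t$-dependent extraction.

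Your route is more economical in hypotheses. It needs neither Lemma~\ref{cv2} nor the derivative conditions in \ref{A1}--\ref{A2}; only continuity of $g$ and the growth $|g(s)|\lesssim 1+s^\sigma$ are used. The cost is a rate-free, pointwise-in-time convergence. That is exactly what the lemma asserts, and it is all the dominated-convergence argument in Lemma~\ref{ghj} requires. Your handling of the $t$-dependent a.e.\ subsequences through uniqueness of the limit is sound.
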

\begin{proof}
 We begin with the first convergence.  
By Remark \ref{xv}, we have
$$
u_\varepsilon(t) \rightharpoonup u(t) \quad \text{in } L^2(\mathbb{R}^d), \quad \text{for all } t \in \mathbb{R}_+.
$$
On the other hand, the sequence $u_\varepsilon(t)$ is uniformly bounded with respect to $\varepsilon$ in $H^{1}(\mathbb{R}^d)$, which yields the desired first convergence.

 We then turn to the second convergence. Let $\Omega \Subset \mathbb{R}^d$. By Lemma~\ref{cv2}, for $0 < \delta_{1,2} < \frac{1}{4}$, we have
$$
 \begin{aligned}
&\norm{ u_\varepsilon(t) \log(|u_\varepsilon(t)|^2+\varepsilon) - u(t) \log|u(t)|^2}_{L^2(\Omega)}^2
\\&~~~~~~~~\lesssim_{\delta_{1,2},\Omega} \varepsilon^2 + \norm{u(t) - u_\varepsilon(t)}_{L^2(\Omega)}^2+ \norm{|u(t) - u_\varepsilon(t)|^{1/2} \big( |u_\varepsilon(t)|^{1/2 - \delta_1}+|u(t)|^{1/2 - \delta_1})}_{L^2(\Omega)}^2\\&\qquad ~~~~~~~~~+\norm{|u(t) - u_\varepsilon(t)|^{1/2} \big( |u_\varepsilon(t)|^{1/2 + \delta_2}+|u(t)|^{1/2 + \delta_2})}_{L^2(\Omega)}^2.
\end{aligned}
$$

$\circ$ The first two terms converge to $0$ as $\varepsilon\to 0$. 

$\circ$ \textit{Third term:}
\begin{align*}
&\norm{\Big(|u_\varepsilon(t)|^{1/2 - \delta_1} +|u(t)|^{1/2 - \delta_1}\Big)|u_\varepsilon(t) - u(t)|^{1/2}}_{L^2(\Omega)}^2
\\&\qquad\lesssim \Big(\norm{|u_\varepsilon(t)|^{1 - 2\delta_1}}_{L^2(\Omega)}+\norm{|u(t)|^{1 - 2\delta_1}}_{L^2(\Omega)}\Big) \norm{u_\varepsilon(t) - u(t)}_{L^2(\Omega)} \\
&\qquad= \Big(\norm{u_\varepsilon(t)}_{L^{2 - 4\delta_1}(\Omega)}^{1 - 2\delta_1}+\norm{u(t)}_{L^{2 - 4\delta_1}(\Omega)}^{1 - 2\delta_1}\Big) \norm{u_\varepsilon(t) - u(t)}_{L^2(\Omega)}.
\end{align*}
Since $L^2(\Omega) \subset L^{2 - 4\delta_1}(\Omega)$ and by $\norm{u_\varepsilon(t)}_{L^2} \le \norm{u_0}_{L^2}$, we have
$$
\norm{\Big(|u_\varepsilon(t)|^{1/2 - \delta_1} +|u(t)|^{1/2 - \delta_1}\Big)|u_\varepsilon(t) - u(t)|^{1/2}}_{L^2(\Omega)}^2 \lesssim C(\norm{u_0}_{L^2}) \norm{u_\varepsilon(t) - u(t)}_{L^2(\Omega)}.
$$
Hence,
$$
\norm{|u_\varepsilon(t)|^{1/2 - \delta_1} |u_\varepsilon(t) - u(t)|^{1/2}}_{L^2(\Omega)}^2 \underset{\varepsilon \to 0}{\longrightarrow} 0.
$$

$\circ$ \textit{Fourth term:}
\begin{align*}
&\norm{\Big(|u_\varepsilon(t)|^{1/2 + \delta_2}+|u(t)|^{1/2 + \delta_2}\Big) |u_\varepsilon(t) - u(t)|^{1/2}}_{L^2(\Omega)}^2
\\&\qquad\lesssim \Big(\norm{|u_\varepsilon(t)|^{1 + 2\delta_2}}_{L^2(\Omega)}+\norm{|u_\varepsilon(t)|^{1 + 2\delta_2}}_{L^2(\Omega)}\Big) \norm{u_\varepsilon(t) - u(t)}_{L^2(\Omega)} \\
&\qquad= \Big(\norm{u_\varepsilon(t)}_{L^{2 + 4\delta_2}(\Omega)}^{1 + 2\delta_2}+\norm{u_\varepsilon(t)}_{L^{2 + 4\delta_2}(\Omega)}^{1 + 2\delta_2}\Big) \norm{u_\varepsilon(t) - u(t)}_{L^2(\Omega)}.
\end{align*}
We choose $\delta_2$ such that $2 + 4\delta_2 < 2^*$. By the Sobolev embedding, we conclude
$$
\norm{\Big(|u_\varepsilon(t)|^{1/2 + \delta_2}+|u(t)|^{1/2 + \delta_2}\Big) |u_\varepsilon(t) - u(t)|^{1/2}}_{L^2(\Omega)}^2 \lesssim C(\norm{u_0}_{H^1(\mathbb{R}^d)}) \norm{u_\varepsilon(t) - u(t)}_{L^2(\Omega)}.
$$
But $u_\varepsilon(t) \to u(t)$ in $L^2(\Omega)$ for all $t \in \mathbb{R}_+$. Combining this with (\ref{nt}), we deduce the convergence stated in the lemma.

Finally, we prove the last convergence. Let $\Omega \Subset \mathbb{R}^d$. Using Corollary \ref{power10}, Hölder's inequality, and the Sobolev embedding $L^{\frac{2\sigma+2}{2\sigma+1}}(\Omega) \hookrightarrow H^{-1}(\Omega)$ and $L^2(\Omega)\hookrightarrow H^{-1}(\Omega)$ we deduce that
\begin{align*}
    &\norm{u_\varepsilon(t) g(|u_\varepsilon(t)|^{2}) - u(t) g(|u(t)|^{2})}_{H^{-1}(\Omega)}
\\&~~~~~~~~~~~~~~~~\lesssim \left(\norm{u_\varepsilon(t)}_{L^{2\sigma+2}(\Omega)}^{2\sigma} + \norm{u(t)}_{L^{2\sigma+2}(\Omega)}^{2\sigma} \right) \norm{u_\varepsilon(t) - u(t)}_{L^{2\sigma+2}(\Omega)}+\norm{u_\varepsilon(t)-u(t)}_{L^2(\Omega)}.
\end{align*}

By the embedding $H^1(\Omega) \hookrightarrow L^{2\sigma+2}(\Omega)$ and $L^{2\sigma+2}(\Omega)\subset L^2(\Omega)$, we obtain
$$
\norm{u_\varepsilon(t) g(|u_\varepsilon(t)|^{2}) - u(t) g(|u(t)|^{2})}_{H^{-1}(\Omega)} \lesssim_{T,\Omega} \norm{u_\varepsilon(t) - u(t)}_{L^{2\sigma+2}(\Omega)}.
$$
Moreover, by Lemma~\ref{1.2}, we have weak convergence $u_\varepsilon(t) \rightharpoonup u(t)$ in $H^1(\Omega)$, so by the Rellich-Kondrachov compactness theorem, we have
\begin{equation*}
    u_\varepsilon(t) \to u(t) \quad \text{strongly in } L^{2\sigma+2}(\Omega).
\qedhere
\end{equation*}
\end{proof}
\begin{lemma}\label{ghj}
    The function $u$ defined in Lemma~\ref{limu} as the limit of $u_\varepsilon$ satisfies, for all $\Omega \Subset \mathbb{R}^d$,
$$
u \in L^\infty_{\text{loc}}(\mathbb{R}_+, H^1(\mathbb{R}^d)) \cap W_{\text{loc}}^{1,\infty}(\mathbb{R}_+, H^{-1}(\Omega)),
$$
and moreover,
\begin{equation*}\label{equation}
i u_t + \Delta u + \lambda u \log|u|^2 +  u g(|u|^{2}) = 0 \quad \text{in } H^{-1}(\Omega), \text{ for all } t \in \mathbb{R}_+.
\end{equation*}
\end{lemma}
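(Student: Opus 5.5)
The proof passes to the limit $\varepsilon\to0$ in the regularized equation \eqref{ppe}, written in distributional form on $\Omega$. The bound $u\in L^\infty_{\rm loc}(\mathbb{R}_+,H^1(\mathbb{R}^d))$ is already in Lemma~\ref{1.2}. By weak lower semicontinuity, $\|u(t)\|_{H^1}\le\liminf_\varepsilon\|u_\varepsilon(t)\|_{H^1}\le N_T$ for $t\in[0,T]$, with $N_T$ as in \eqref{nt}.

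\textbf{Time regularity.} The proof of Lemma~\ref{limu} gives $\|\partial_t u_\varepsilon\|_{H^{-1}(B_r)}\le C(N_T)$ uniformly in $\varepsilon$ and $t\in[0,T]$. Fix $\Omega\Subset\mathbb{R}^d$ and choose $r$ with $\Omega\subset B_r$. Then $(u_\varepsilon)$ is bounded in $W^{1,\infty}(0,T;H^{-1}(\Omega))$. Since $u_\varepsilon\to u$ in $\mathcal{C}([0,T],L^2(\Omega))$, the distributional derivative $\partial_t u_\varepsilon$ converges to $\partial_t u$ in $\mathcal{D}'((0,T)\times\Omega)$. Extracting a weak-$*$ convergent subsequence in $L^\infty(0,T;H^{-1}(\Omega))$ and identifying its limit with $\partial_t u$ gives $\partial_t u\in L^\infty(0,T;H^{-1}(\Omega))$ with the same bound. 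Hence $u\in W^{1,\infty}_{\rm loc}(\mathbb{R}_+,H^{-1}(\Omega))$.

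\textbf{Limit in the equation.} Take $\varphi\in \mathcal{C}_c^\infty(\Omega)$ and $\theta\in\mathcal{C}_c^\infty((0,T))$. Testing \eqref{ppe} against them gives
\begin{equation*}
-i\int_0^T\theta'(t)\langle u_\varepsilon,\varphi\rangle\,\mathrm{d}t+\int_0^T\theta(t)\Big(-\langle \nabla u_\varepsilon,\nabla\varphi\rangle+\lambda\langle u_\varepsilon\log(|u_\varepsilon|^2+\varepsilon),\varphi\rangle+\langle u_\varepsilon g(|u_\varepsilon|^2),\varphi\rangle\Big)\mathrm{d}t=0 .
\end{equation*}
For each fixed $t$, each bracket converges:
\begin{itemize}
\item the first term, by $L^2_{\rm loc}$ convergence;
\item the gradient term, by $u_\varepsilon(t)\rightharpoonup u(t)$ in $H^1$;
\item the logarithmic term, by the $L^2_{\rm loc}$ convergence of Lemma~\ref{1.2};
\item the $g$ term, by the $H^{-1}(\Omega)$ convergence of Lemma~\ref{1.2}.
\end{itemize}
Each bracket is also bounded uniformly by $C(N_T)\|\varphi\|_{H^1_0}$, using the estimates from the proof of Lemma~\ref{limu}. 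Dominated convergence in $t$ then yields the limiting identity. So the equation holds in $\mathcal{D}'((0,T)\times\Omega)$.

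Every term other than $iu_t$ lies in $L^\infty(0,T;H^{-1}(\Omega))$. For $u\log|u|^2$ this uses the same splitting into $|u|^{2\pm2\delta}$ as in Lemma~\ref{limu}, so $u\log|u|^2\in L^2(\Omega)$. For $ug(|u|^2)$ it uses Corollary~\ref{power10} and $L^{\frac{2\sigma+2}{2\sigma+1}}(\Omega)\hookrightarrow H^{-1}(\Omega)$. Therefore the identity holds in $H^{-1}(\Omega)$ for a.e.\ $t$.

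\textbf{Upgrade to every $t$.} This is the main obstacle. I would use the pointwise-in-time convergences of Lemma~\ref{1.2}, which hold for every $t$ rather than almost every $t$. These show that $t\mapsto \Delta u+\lambda u\log|u|^2+ug(|u|^2)$ is weakly continuous into $H^{-1}(\Omega)$. This follows because $u\in\mathcal{C}([0,T],L^2(\Omega))$ together with the $H^1$ bound gives $u\in\mathcal{C}_w([0,T],H^1)$, and the nonlinear terms are continuous from $L^2(\Omega)$ into $H^{-1}(\Omega)$ on $H^1$-bounded sets, by the same estimates as in Lemma~\ref{1.2}. Hence $u_t$ has a weakly continuous representative in $H^{-1}(\Omega)$, and with $u_t$ understood as this representative the equation holds for all $t\in\mathbb{R}_+$.
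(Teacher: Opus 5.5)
Your proof is correct and follows the same route as the paper: you test the regularized equation against a product $\theta(t)\varphi(x)$, bound each term uniformly in $\varepsilon$ through $N_T$, and pass to the limit by dominated convergence in $t$ using the pointwise-in-time convergences of Lemma~\ref{1.2}. The differences are minor. You obtain $u\in W^{1,\infty}_{\rm loc}(\mathbb{R}_+,H^{-1}(\Omega))$ by weak-$*$ compactness of $\partial_t u_\varepsilon$, whereas the paper reads it off the limit equation and the boundedness of $L:H^1(\mathbb{R}^d)\to H^{-1}(\Omega)$; and you justify the passage from a.e.\ $t$ to every $t$ via the weak continuity of $t\mapsto Lu(t)$, which the paper leaves implicit.
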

\begin{proof}
    Let $\phi \in \mathcal{C}_c^1([0,+\infty))$ and $\psi \in \mathcal{C}_c^{\infty}(\mathbb{R}^d)$, then
\begin{align*}
    \int_0^{\infty} (i u_\varepsilon, \psi)_2  \phi'(t)  dt 
    = \int_0^{\infty} \Big[ -(\nabla u_\varepsilon, \nabla \psi)_2 
    + \lambda (f_{1\varepsilon}(u_\varepsilon), \psi)_2 
    +  \langle f_{2\varepsilon}(u_\varepsilon), \psi\rangle_{H^{-1}(\mathbb{R}^d),H^1(\mathbb{R}^d)} \Big] \phi(t)  dt,
\end{align*}
where $f_{1\varepsilon}(u_\varepsilon) = u_\varepsilon \log(|u_\varepsilon|^2 + \varepsilon)$ and $f_{2\varepsilon}(u_\varepsilon) = u_\varepsilon g(|u_\varepsilon|^{2})$.

$\bullet$ \textit{First term:} By Cauchy-Schwarz, we have
$$
\left| (\nabla u_\varepsilon(t), \nabla \psi)_2 \right| \leq N_T \| \nabla \psi \|_{L^2}.
$$

$\bullet$ \textit{Second term:} From the previous part, we have
$$
\norm{u_\varepsilon(t) \log(|u_\varepsilon(t)|^2 + \varepsilon)}_{L^2(\Omega)}^2 \leq C(N_T),
$$
where $\Omega = \text{supp}(\psi)$. Hence,
$$
\left| (f_{1\varepsilon}(u_\varepsilon(t)), \psi)_2 \right| \leq C(N_T) \| \psi \|_{L^2}.
$$

$\bullet$ \textit{Third term:} By Hölder's inequality with $(p,p')=(r',r)$,
$$
\left| (f_{2\varepsilon}(u_\varepsilon(t)), \psi)_2 \right| 
\leq \| u_\varepsilon \|_{L^{2\sigma+2}}^{2\sigma+1} \| \psi \|_{L^{2\sigma+2}}.
$$
By the Sobolev embedding $H^1(\mathbb{R}^d) \hookrightarrow L^r(\mathbb{R}^d)$, we obtain
$$
\left| (f_{2\varepsilon}(u_\varepsilon(t)), \psi)_2 \right| 
\lesssim N_T^{2\sigma+1} \| \psi \|_{L^{2\sigma+2}}.
$$
Finally, combining the dominated convergence theorem with Lemma~\ref{1.2}, we deduce that
$$
\int_0^{\infty} (i u, \psi)_2 \phi'(t)  dt 
= \int_0^{\infty} \Big[ -(\nabla u, \nabla \psi)_2 
+ \lambda (f_1(u), \psi)_2 
+  \langle f_2(u), \psi \rangle_{H^{-1}(\mathbb{R}^d),H^1(\mathbb{R}^d)} \Big] \phi(t)  dt.
$$
From the above, we have for all $\Omega \Subset \mathbb{R}^d$
$$
i u_t + \Delta u + \lambda u \log(|u|^2) +  u g(|u|^{2}) = 0 \quad \text{in } H^{-1}(\Omega), \text{ for all } t \in \mathbb{R}_+.
$$
To deduce that $u \in W_{\text{loc}}^{1,\infty}(\mathbb{R}_+, H^{-1}(\Omega))$, it suffices to notice that the map 
$$
L: u \mapsto \Delta u + \lambda u \log|u|^2 +  u g(|u|^{2}),
$$ 
is bounded from $H^1(\mathbb{R}^d)$ into $H^{-1}(\Omega)$.
\end{proof}
\subsection{Mass conservation and regularity of the solution}
In \cite{3}, the conservation of mass is obtained thanks to the uniqueness of the solution and the regularity $u \in \mathcal{C}_w(\mathbb{R}_+, H^1)$, which we have not yet established. In our case, we use a different approach by passing through a cut-off function.
\\
Let $\chi \in \mathcal{C}_c^{1}(\mathbb{R}^d)$ be a cut-off function satisfying
$$
\chi(x) =
\begin{cases}
1 & \text{if } |x| \le \frac{1}{2}, \\
0 & \text{if } |x| \ge 1,
\end{cases}
\quad \text{with } 0 \le \chi(x) \le 1 \text{ for all } x \in \mathbb{R}^d.
$$
For $R > 0$, we set $\chi_R(x) := \chi\bigl(\frac{x}{R}\bigr)$.
\begin{lemma}\label{23}
   The function $u$, solution of equation \eqref{NLS} defined in Lemma \ref{ghj}, satisfies
$$
\norm{u(t)}_{L^2} = \norm{u_0}_{L^2},
$$
for all $t \in \mathbb{R}_+$.  
In particular, up to a subsequence,
$$
u_{\varepsilon}(t) \to u(t) \quad \text{in } L^2(\mathbb{R}^d), \quad \text{for all } t \in \mathbb{R}_+.
$$
\end{lemma}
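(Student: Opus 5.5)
We localize the mass with the cut-off $\chi_R$ and pass to the limit first in $\varepsilon$ and then in $R$. For fixed $R>0$ we test the equation satisfied by $u_\varepsilon$ (and then by $u$) against $\chi_R u$. The logarithmic and $g$ terms are real multiples of $u$, so they disappear after taking imaginary parts. This leaves the localized mass identity
$$
\frac{\mathrm{d}}{\mathrm{d}t}\int \chi_R |u_\varepsilon(t)|^2\,\mathrm{d}x = -2\,\Im\int \overline{u_\varepsilon(t)}\,\nabla\chi_R\cdot\nabla u_\varepsilon(t)\,\mathrm{d}x .
$$
For $u_\varepsilon$ this is rigorous, since $u_\varepsilon\in \mathcal{C}^1([0,T],H^{-1})\cap\mathcal{C}([0,T],H^1)$. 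Integrating in time gives
$$
\int\chi_R|u_\varepsilon(t)|^2\,\mathrm{d}x=\int\chi_R|u_0|^2\,\mathrm{d}x-2\Im\int_0^t\!\!\int \overline{u_\varepsilon}\nabla\chi_R\cdot\nabla u_\varepsilon\,\mathrm{d}x\,\mathrm{d}s .
$$

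Next we let $\varepsilon\to0$ at fixed $R$. The left side converges to $\int\chi_R|u(t)|^2\,\mathrm{d}x$ by Lemma~\ref{limu}, since $\operatorname{supp}\chi_R$ is compact. In the integrand, $\overline{u_\varepsilon(s)}\nabla\chi_R\to\overline{u(s)}\nabla\chi_R$ strongly in $L^2$ on the support, and $\nabla u_\varepsilon(s)\rightharpoonup\nabla u(s)$ weakly in $L^2$ by Lemma~\ref{1.2}. The product of a strongly and a weakly convergent sequence converges, so the inner integral converges for every $s$. It is also bounded by $\norm{\nabla\chi}_\infty R^{-1}N_T^2$, so dominated convergence in $s$ gives the same identity for $u$.

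We then bound the flux term by $2tR^{-1}\norm{\nabla\chi}_\infty \sup_{s\le t}\norm{u(s)}_{L^2}\norm{\nabla u(s)}_{L^2}$. This is at most $C(N_T)\,t/R$, because $u\in L^\infty_{\rm loc}(\mathbb{R}_+,H^1)$ by Lemma~\ref{1.2}. Letting $R\to\infty$ and using monotone convergence on both $\int\chi_R|u(t)|^2$ and $\int\chi_R|u_0|^2$ (with $\chi$ chosen radially nonincreasing, or simply via dominated convergence), we obtain $\norm{u(t)}_{L^2}=\norm{u_0}_{L^2}$.

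For the second claim, Remark~\ref{xv} gives $u_\varepsilon(t)\rightharpoonup u(t)$ weakly in $L^2$. Mass conservation for $u_\varepsilon$ gives $\norm{u_\varepsilon(t)}_{L^2}=\norm{u_0}_{L^2}=\norm{u(t)}_{L^2}$. Weak convergence together with convergence of norms in a Hilbert space yields strong convergence in $L^2(\mathbb{R}^d)$.

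The main obstacle is the limit $\varepsilon\to0$ in the flux term, which requires pairing the strong local $L^2$ convergence with the weak convergence of the gradients, together with the uniform bound $N_T$ so that the $R\to\infty$ limit is uniform in $\varepsilon$. A convenient alternative is to take $R\to\infty$ before $\varepsilon\to0$. The uniform bound gives
$$
\Bigl|\int\chi_R|u_\varepsilon(t)|^2-\int\chi_R|u_0|^2\Bigr|\le Ct/R
$$
uniformly in $\varepsilon$, and passing to the limit $\varepsilon\to0$ on the compact support then never requires differentiating the weak solution $u$ itself.
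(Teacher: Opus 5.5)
Your argument is correct and follows the paper's strategy. You localize the mass with $\chi_R$, bound the flux term by $C\,t\,\norm{u_0}_{L^2}N_T/R$, let $R\to\infty$, and then upgrade weak $L^2$ convergence to strong convergence through equality of norms. The one difference is where the localized identity is derived. You derive it for the regularized solutions $u_\varepsilon$, where differentiating in time is justified by $u_\varepsilon\in\mathcal{C}^1([0,T],H^{-1})\cap\mathcal{C}([0,T],H^1)$, and then pass to the limit in $\varepsilon$. The paper instead differentiates $\norm{\chi_R u(t)}_{L^2}^2$ directly for the weak limit $u$ without justifying that step, so your version is the more rigorous one. The sign of your flux term should be $+2\Im$, which is harmless since only its absolute value is used.
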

\begin{proof}
Let $t \in \mathbb{R}_+$. Then
\begin{align*}
   \abs{ \frac{\mathrm{d}}{\mathrm{d}t} \norm{\chi_R u(t)}^2_2 }
   = 2 \abs{ \Im \bigl( \nabla (\chi_R)^2 \nabla u(t), u(t) \bigr) } 
   \leq \frac{C}{R} \norm{\nabla u(t)}_{L^2} \norm{u(t)}_{L^2} 
   \leq \frac{C}{R} \norm{u_0}_{L^2} N_T.
\end{align*}
Integrating both sides over $[0,t]$, we deduce:
$$
\abs{ \norm{\chi_R u(t)}_{L^2}^2 - \norm{\chi_R u_0}_{L^2}^2 } \leq \frac{C}{R} T \norm{u_0}_{L^2} N_T.
$$
By the dominated convergence theorem, we then conclude
$$
\norm{u(t)}_{L^2} = \norm{u_0}_{L^2}.
$$
Using the weak convergence obtained in Lemma \ref{1.2}, we have that $u_\varepsilon(t)$ converges weakly to $u(t)$ in $L^2(\mathbb{R}^d)$. On the other hand, since $\norm{u_\varepsilon(t)}_{L^2} = \norm{u_0}_{L^2} = \norm{u(t)}_{L^2}$, we obtain the strong convergence of the sequence $(u_\varepsilon(t))_\varepsilon$ in $L^2(\mathbb{R}^d)$.
\end{proof}

\begin{corollary}\label{rq2}
The function $u$, solution of equation \eqref{NLS} defined in Lemma \ref{ghj}, satisfies, up to a subsequence
$$
u_\varepsilon(t) \to u(t) \quad \text{in } L^{2\sigma+2}(\mathbb{R}^d), \quad \text{for all } t \in \mathbb{R}_+.
$$
In particular, $$
G(|u_\varepsilon(t)|^2) \to G(|u(t)|^2) \quad \text{in } L^1(\mathbb{R}^d), \quad \text{for all } t \in \mathbb{R}_+.
$$
\end{corollary}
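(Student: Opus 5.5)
The plan is to interpolate between the strong $L^2$ convergence of Lemma \ref{23} and the uniform $H^1$ bound \eqref{nt}. Fix $T>0$ and $t\in[0,T]$, and set $r:=2\sigma+2$. Under the standing assumptions we have $2<r<2^*$, where $2^*=\infty$ if $d=1,2$. Therefore the Gagliardo--Nirenberg inequality (Proposition \ref{GN}) gives
$$
\norm{u_\varepsilon(t)-u(t)}_{L^{r}}\lesssim \norm{u_\varepsilon(t)-u(t)}_{L^2}^{1-\theta}\norm{\nabla(u_\varepsilon(t)-u(t))}_{L^2}^{\theta},\qquad \theta=\frac{d\sigma}{2\sigma+2}\in(0,1).
$$
The gradient factor is at most $2N_T$. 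This follows from \eqref{nt} together with the bound $\norm{u(t)}_{H^1}\le\liminf\norm{u_\varepsilon(t)}_{H^1}\le N_T$, which comes from the weak $H^1$ convergence of Lemma \ref{1.2}. The $L^2$ factor tends to $0$ along the subsequence of Lemma \ref{23}. Since $1-\theta>0$, this yields the first claim.

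For the second claim, I first need a pointwise Lipschitz-type bound on $G$, namely
$$
\abs{G(a^2)-G(b^2)}\lesssim \bigl(a^{2\sigma+1}+b^{2\sigma+1}+a+b\bigr)\abs{a-b}\quad\text{for } a,b\ge0.
$$
To get this, write $G(a^2)-G(b^2)=\frac12\int_{b^2}^{a^2}g(s)\,\mathrm{d}s$. The size of $g$ is controlled using \ref{A1}--\ref{A2}, as in Corollary \ref{power10}. On $[1,\infty)$, integrating $\abs{g'(s)}\le Cs^{\sigma-1}$ gives $\abs{g(s)}\lesssim 1+s^\sigma$. On $[0,1]$, $g$ is continuous and hence bounded. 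Altogether $\abs{g(s)}\lesssim 1+s^{\sigma}$. Then, by the mean value theorem in the variable $s=\rho^2$, we get $\abs{G(a^2)-G(b^2)}\lesssim \sup_{\rho\in[b,a]}\rho\,(1+\rho^{2\sigma})\,\abs{a-b}$, which is the displayed bound.

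I then apply this bound with $a=\abs{u_\varepsilon(t)}$, $b=\abs{u(t)}$ and use $\abs{a-b}\le\abs{u_\varepsilon-u}$. The $\sigma$-dependent part is handled by Hölder's inequality with exponents $\bigl(\frac{r}{r-1},r\bigr)$:
$$
\int\bigl(\abs{u_\varepsilon}^{2\sigma+1}+\abs{u}^{2\sigma+1}\bigr)\abs{u_\varepsilon-u}\,\mathrm{d}x\le\bigl(\norm{u_\varepsilon}_{L^r}^{r-1}+\norm{u}_{L^r}^{r-1}\bigr)\norm{u_\varepsilon-u}_{L^r}.
$$
The prefactor is bounded by $C(N_T)$ via the Sobolev embedding, and $\norm{u_\varepsilon-u}_{L^r}\to0$ by the first claim. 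The linear part is handled by Cauchy--Schwarz:
$$
\int(\abs{u_\varepsilon}+\abs{u})\abs{u_\varepsilon-u}\,\mathrm{d}x\le 2\norm{u_0}_{L^2}\norm{u_\varepsilon-u}_{L^2},
$$
which tends to $0$ by Lemma \ref{23}. This gives the $L^1$ convergence of $G(\abs{u_\varepsilon(t)}^2)$ to $G(\abs{u(t)}^2)$.

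The main point requiring care is the growth bound on $g$ near $0$. Assumption \ref{A1} only gives continuity of $g$ at $0$ with $g(0)=0$, so only boundedness of $g$ on $[0,1]$ is available there, which is why I keep the linear term and control it by the mass. A second point is that the subsequence must be fixed independently of $t$. This is already ensured by the diagonal extraction in Lemma \ref{limu}, and Lemma \ref{23} upgrades the convergence to strong $L^2$ convergence for every $t$ along that same sequence.
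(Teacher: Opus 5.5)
Your proposal is correct and follows essentially the paper's argument: Gagliardo--Nirenberg interpolation between the strong $L^2$ convergence of Lemma \ref{23} and the uniform $H^1$ bound gives the $L^{2\sigma+2}$ convergence, and a growth bound on $g$ (which the paper cites as Corollary \ref{power10}) combined with H\"older's inequality gives the $L^1$ convergence of $G(|u_\varepsilon(t)|^2)$. Your derivation of the Lipschitz-type bound on $G$ directly from \ref{A1}--\ref{A2}, and your check that the subsequence does not depend on $t$, simply make explicit steps the paper leaves implicit.
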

\begin{proof}
For the first convergence, we use the Lemma \ref{23}, and the fact that $u_\varepsilon(t)$ is uniformly bounded in $H^1(\mathbb{R}^d)$ together with the Gagliardo–Nirenberg inequality. For the second convergence we obtain the desired conclusion by using Corollary \ref{power10} and the first convergence. 
\end{proof}
\begin{lemma}\label{continue}
The function $u$, solution of equation \eqref{NLS} defined in Lemma \ref{ghj}, satisfies
$$
u \in \mathcal{C}\bigl(\mathbb{R}_+, H^1(\mathbb{R}^d)\bigr).
$$
\end{lemma}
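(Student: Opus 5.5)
The plan is to follow the classical route from weak continuity plus norm continuity to strong continuity. The first step is to show $u\in\mathcal{C}_w(\mathbb{R}_+,H^1(\mathbb{R}^d))$. By Lemma \ref{ghj}, $u\in W^{1,\infty}_{\rm loc}(\mathbb{R}_+,H^{-1}(\Omega))$ for every $\Omega\Subset\mathbb{R}^d$, so $t\mapsto u(t)$ is continuous into $H^{-1}(\Omega)$. Combined with the bound $\sup_{t\in[0,T]}\norm{u(t)}_{H^1}\le N_T$ (weak lower semicontinuity in Lemma \ref{1.2}) and the density of $\mathcal{C}_c^\infty$ in $H^{-1}$-duals, this gives weak continuity into $H^1(\mathbb{R}^d)$. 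Interpolating with the $H^1$ bound also yields $u\in\mathcal{C}(\mathbb{R}_+,L^2_{\rm loc})$. Since the mass is constant by Lemma \ref{23}, weak continuity in $L^2$ upgrades to $u\in\mathcal{C}(\mathbb{R}_+,L^2(\mathbb{R}^d))$. Then Gagliardo–Nirenberg with the uniform $H^1$ bound gives $u\in\mathcal{C}(\mathbb{R}_+,L^{2\sigma+2})$, hence $t\mapsto\int G(|u(t)|^2)$ is continuous by Corollary \ref{power10}.

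It then remains to show that $t\mapsto\norm{\nabla u(t)}_{L^2}$ is continuous, since weak continuity plus continuity of the norm gives strong continuity. We have no energy conservation in $H^1$ for the limit, so I would pass the inequality of Lemma \ref{kpo} to the limit instead. Integrating \eqref{df} between $s$ and $t$ (in either order, using $|\frac{d}{dt}|\le$ bounds from Lemma \ref{jamila}, i.e. the full identity $\frac{d}{dt}(\tfrac12\norm{\nabla u_\varepsilon}^2-\int G)=\frac{d}{dt}(\frac\lambda2\int|u_\varepsilon|^2\log(|u_\varepsilon|^2+\varepsilon)-\lambda\int\eta)$, whose absolute value is $\le 3|\lambda|N_T^2$) gives
$$\Big|\tfrac12\norm{\nabla u_\varepsilon(t)}_2^2-\int G(|u_\varepsilon(t)|^2)-\tfrac12\norm{\nabla u_\varepsilon(s)}_2^2+\int G(|u_\varepsilon(s)|^2)\Big|\le 3|\lambda|N_T^2|t-s|.$$
The $G$ terms converge by Corollary \ref{rq2}. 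For the gradient I would take limsup/liminf as $\varepsilon\to0$. Weak lower semicontinuity gives $\norm{\nabla u(t)}^2\le\liminf\norm{\nabla u_\varepsilon(t)}^2$. For $t=0$ we have $u_\varepsilon(0)=u_0$, so this yields $\norm{\nabla u(t)}_2^2\le\norm{\nabla u_0}_2^2+2|\int G(|u(t)|^2)-\int G(|u_0|^2)|+6|\lambda|N_T^2t$. Hence $\limsup_{t\to0^+}\norm{\nabla u(t)}_2\le\norm{\nabla u_0}_2$, which combined with weak continuity gives strong continuity at $t=0$.

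The main obstacle is continuity at an arbitrary $t_0>0$, since the approximations $u_\varepsilon(t_0)$ need not converge strongly to $u(t_0)$ in $\dot H^1$. I would first try uniqueness of $H^1$ solutions, in the style of Hayashi \cite{14} via the Cazenave–Haraux logarithmic inequality and the local Lipschitz bound on $g$ from \ref{A1}--\ref{A2}. With uniqueness, restarting the construction from $u(t_0)$ (forward, and backward by the time reversal $\overline{u(-t)}$) and applying the $t=0$ argument gives continuity at $t_0$ from both sides. If uniqueness is not yet available at this point, the fallback is a lower estimate: apply the same integrated inequality with the roles of $s$ and $t$ swapped. This gives $\norm{\nabla u(t)}^2\ge\limsup_\varepsilon\norm{\nabla u_\varepsilon(t_0)}^2-o(1)\ge\norm{\nabla u(t_0)}^2-o(1)$ as $t\to t_0$, while weak lower semicontinuity along $t\to t_0$ gives the reverse. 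Making this two-sided bound close is the delicate point.
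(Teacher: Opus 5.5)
Your proposal is correct and follows essentially the same route as the paper. You get weak continuity in $H^1$, then use mass conservation and Gagliardo--Nirenberg for continuity in $L^2$, in $L^{2\sigma+2}$ and of $\int G(|u|^2)$. Next you pass the integrated inequality of Lemma~\ref{kpo} to the limit to get $\limsup_{t\to0^+}\|\nabla u(t)\|_{L^2}\le\|\nabla u_0\|_{L^2}$, and finally invoke uniqueness to move from $t=0$ to any $t_0$. Your explicit time reversal $\overline{u(t_0-t)}$ for left limits spells out what the paper compresses into ``by uniqueness''. Your fallback is unnecessary, since the uniqueness argument (Lemma~\ref{uniq}) uses only $L^\infty H^1$ bounds and not this lemma, so there is no circularity.
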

\begin{proof}
  First we note that $u \in \mathcal{C}_w(\mathbb{R}_+, H^1(\mathbb{R}^d))$. Indeed, this easily follows from Lemma \ref{1.2} and $u \in \mathcal{C}(\mathbb{R}_+,L^2_{\rm loc}(\mathbb{R}^d))$, therefore, by mass conservation, we conclude that $u \in \mathcal{C}(\mathbb{R}_+, L^2(\mathbb{R}^d))$. We combine the Gagliardo-Nirenberg inequality \ref{GN} with $u \in L^\infty_{\rm loc}(\mathbb{R}_+, H^1(\mathbb{R}^d)) \cap \mathcal{C}(\mathbb{R}_+, L^{2}(\mathbb{R}^d))$, we get $u \in \mathcal{C}(\mathbb{R}_+, L^{2\sigma+2}(\mathbb{R}^d))$, and $G(|u|^2)\in \mathcal{C}(\mathbb{R}_+,L^1(\mathbb{R}^d))$.

Next we show $t\mapsto\norm{\nabla u(t)}_{L^2}\in \mathcal{C}(\mathbb{R}_+)$. By uniqueness it is enough to show the continuity at $t=0.$ \\
By Lemma \ref{kpo}, we have $$\frac{\mathrm{d}}{\mathrm{d}t}\left(\frac{1}{2}\norm{\nabla u_\varepsilon(t)}^2_2- \int G(|u_\varepsilon(t)|^2)\mathrm{d}x\right)\leq 3\abs{\lambda} \norm{\nabla u_\varepsilon(t)}_{L^2}^2.$$ Hence, $$\frac{1}{2}\norm{\nabla u_\varepsilon(t)}^2_2- \int G(|u_\varepsilon(t)|^2)\mathrm{d}x\leq C(e^{Ct}-1)+\frac{1}{2}\norm{\nabla u_0}^2_2-\int G(|u_0|^2)\mathrm{d}x.$$
By Lemma \ref{1.2} and Corollary \ref{rq2}, we get 
$$\frac{1}{2}\norm{\nabla u(t)}^2_2- \int G(|u(t)|^2)\mathrm{d}x\leq C(e^{Ct}-1)+\frac{1}{2}\norm{\nabla u_0}^2_2- \int G(|u_0|^2)\mathrm{d}x.$$
Hence,  $$\limsup_{t\to 0}\norm{\nabla u(t)}^2_2\leq \norm{\nabla u_0}^2_2.$$
On the other hand $\nabla u\in \mathcal{C}_w(\mathbb{R}_+,L^2(\mathbb{R}^d))$. Therefore, combining with the previous calculation, we deduce that $t\mapsto\norm{\nabla u(t)}_{L^2}\in \mathcal{C}(\mathbb{R}_+)$.
\end{proof}
\subsection{Uniqueness and the continuity property of the flow}
In this subsection, we prove Theorem~\ref{08}.  
The key idea is to use the proof scheme introduced by Hayashi in \cite{14}, based on the use of the Duhamel formula. We write the Duhamel term as
$$
\Phi(h)(t) := \int_0^t \mathcal{S}(t-\tau) h(\tau)  d\tau, 
\qquad \mathcal{S}(t) = e^{i t \Delta}.
$$
We say that a pair $(q,r) \in [2,\infty]^2$ is \emph{admissible} if $(q,r,d) \neq (2,\infty,2)$ and
$
\frac{2}{q} = d \left( \frac{1}{2} - \frac{1}{r} \right).
$ 
\begin{lemma}\cite[Theorem 2.3.2]{1}
    Let $(q,r)$ and $(\gamma,\rho)$ be admissible pairs, and let $I$ be an interval. 
If $h \in L^{\gamma'}(I, L^{\rho'}(\mathbb{R}^d))$, then the Duhamel operator $\Phi(h)$ satisfies
$$
\|\Phi(h)\|_{L^q(I,L^r(\mathbb{R}^d))} \lesssim \|h\|_{L^{\gamma'}(I,L^{\rho'}(\mathbb{R}^d))}.
$$
\end{lemma}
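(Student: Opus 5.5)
The statement is the inhomogeneous Strichartz estimate for the Duhamel operator, cited from \cite[Theorem 2.3.2]{1}. I would prove it by the standard $TT^*$ method combined with the Christ--Kiselev lemma, treating the endpoint pair separately through Keel--Tao.

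\textbf{Step 1: dispersive and energy bounds.} Start from the two basic properties of the free group:
\[
\|\mathcal{S}(t)f\|_{L^\infty}\lesssim |t|^{-d/2}\|f\|_{L^1}, \qquad \|\mathcal{S}(t)f\|_{L^2}=\|f\|_{L^2}.
\]
Riesz--Thorin interpolation between them gives, for $2\le r\le\infty$,
\[
\|\mathcal{S}(t)f\|_{L^r}\lesssim |t|^{-d(\frac12-\frac1r)}\|f\|_{L^{r'}}.
\]

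\textbf{Step 2: the diagonal case, non-endpoint.} Take $(\gamma,\rho)=(q,r)$ with $q>2$ and $(q,r)$ admissible. Apply the bound of Step 1 inside the integral:
\[
\|\Phi(h)(t)\|_{L^r}\lesssim \int_I |t-\tau|^{-2/q}\,\|h(\tau)\|_{L^{r'}}\,d\tau .
\]
Since $0<2/q<1$ and $\frac1{q'}-\frac1q=1-\frac2q$, the Hardy--Littlewood--Sobolev inequality in time yields
\[
\|\Phi(h)\|_{L^q(I,L^r)}\lesssim\|h\|_{L^{q'}(I,L^{r'})}.
\]

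\textbf{Step 3: the homogeneous estimate and $TT^*$.} The same convolution bound applies to the full operator $TT^*h=\int_{\mathbb{R}}\mathcal{S}(t-\tau)h(\tau)\,d\tau$, where $T f=\mathcal{S}(\cdot)f$. By the $TT^*$ argument this gives the homogeneous estimate
\[
\|\mathcal{S}(\cdot)f\|_{L^q(\mathbb{R},L^r)}\lesssim\|f\|_{L^2},
\]
and by duality
\[
\Big\|\int_{\mathbb{R}}\mathcal{S}(-\tau)h(\tau)\,d\tau\Big\|_{L^2}\lesssim\|h\|_{L^{\gamma'}(L^{\rho'})}
\]
for every admissible pair $(\gamma,\rho)$. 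Composing the two gives the untruncated estimate
\[
\Big\|\int_{\mathbb{R}}\mathcal{S}(t-\tau)h(\tau)\,d\tau\Big\|_{L^q(L^r)}\lesssim\|h\|_{L^{\gamma'}(L^{\rho'})}
\]
for arbitrary, unrelated admissible pairs.

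\textbf{Step 4: the retarded operator.} Restricting the integral to $\tau<t$ (and $\tau\in I$, after extending $h$ by zero outside $I$) follows from the Christ--Kiselev lemma whenever $\gamma'<q$. This covers every non-endpoint case, since $\gamma'\le 2\le q$ with at least one inequality strict.

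\textbf{Main obstacle.} The hard part is the double endpoint $q=\gamma=2$, $d\ge3$. Here both HLS and Christ--Kiselev fail, and one needs the Keel--Tao bilinear argument: a dyadic decomposition in $|t-\tau|$, interpolation of the resulting bilinear estimates, and atomic summation. I would simply invoke Keel--Tao for this case, as \cite{1} does. The excluded pair $(2,\infty,2)$ is exactly the one where this argument breaks down.
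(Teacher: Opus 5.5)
The paper gives no proof of this statement; it quotes the textbook Strichartz theorem. The textbook argument differs from yours in how the off-diagonal and truncated cases are obtained:

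\begin{itemize}
\item The diagonal case $(q,r)=(\gamma,\rho)$ comes from the dispersive bound and Hardy--Littlewood--Sobolev, exactly as in your Step~2.
\item The homogeneous estimate comes from the same $TT^*$ computation as in your Step~3.
\item The bound $\|\Phi(h)\|_{L^\infty(I,L^2)}\lesssim\|h\|_{L^{\gamma'}(I,L^{\rho'})}$ follows from the dual homogeneous estimate. For each fixed $t$ one has $\Phi(h)(t)=\mathcal{S}(t)\int \mathbf{1}_{[0,t]}(\tau)\mathcal{S}(-\tau)h(\tau)\,d\tau$, and cutting $h$ off in time costs nothing.
\item H\"older interpolation between $L^\infty_tL^2_x$ and $L^\gamma_tL^\rho_x$ gives every admissible $(q,r)$ with $2\le r\le\rho$, since the admissibility relation is affine in $(1/q,1/r)$.
\item The pairs with $r>\rho$ follow by duality: the adjoint of the retarded Duhamel operator is an advanced one, with the roles of $(q,r)$ and $(\gamma,\rho)$ exchanged.
\end{itemize}

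Your Christ--Kiselev step replaces this interpolation/duality bookkeeping by one abstract truncation lemma. It is more robust: it needs only the untruncated bound and $\gamma'<q$, so it transfers verbatim to any dispersive family and also covers pairs where exactly one exponent is the endpoint. The textbook route is more elementary, using nothing beyond H\"older, duality and HLS.

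One piece of bookkeeping needs fixing. Your Step~3 runs $TT^*$ through HLS, so it only yields the homogeneous estimate for $q>2$. At $q=2$, $d\ge 3$, the time kernel $|t-\tau|^{-1}$ is outside the range of HLS. The endpoint estimate $\|\mathcal{S}(\cdot)f\|_{L^2(\mathbb{R},L^{2d/(d-2)})}\lesssim\|f\|_{L^2}$ is precisely the main theorem of Keel--Tao.

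Hence your claim that the dual estimate holds ``for every admissible pair'' is not justified by what precedes it. The single-endpoint inhomogeneous cases ($q=2<\gamma$ or $\gamma=2<q$), which you assign to Christ--Kiselev, also rest on Keel--Tao. Christ--Kiselev does apply there, since $\gamma'<q$, but its input, the untruncated bound, requires the endpoint homogeneous estimate. So Keel--Tao must be invoked for every case involving the endpoint pair, not only the double endpoint; with that, your argument is complete.

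This does not affect the paper. Its uniqueness lemma only uses non-endpoint pairs, namely $(\infty,2)$ and the pair with $r=2\sigma+2<2^*$.
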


We consider a function $\zeta \in \mathcal{C}^1_c(\mathbb{C}, \mathbb{R})$ satisfying
$$
\zeta(z) = 
\begin{cases}
1 & \text{if } |z| \le \frac{1}{2}, \\
0 & \text{if } |z| \ge 1,
\end{cases}
\quad \text{and} \quad 0 \le \zeta(z) \le 1 \text{ for all } z \in \mathbb{C}.
$$
We define
$$
g_1(z) = \zeta(z) z \log|z|^2, \quad \text{and} \quad g_2(z) = (1-\zeta(z)) z \log|z|^2.
$$

\begin{lemma}\label{uniq}
    Let $\alpha\in (0,1)$, $T>0$ and $I_t = [0,t] \subset [0,T]$.  
Let $u$ and $v$ be two solutions of \eqref{NLS} in $H^1(\mathbb{R}^d)$ with initial data $u_0$ and $v_0$, respectively.  
Set $w := u - v$. Then
    $$
\| \chi_R w \|_{X_t} \le \| \chi_R w_0 \|_{L^2} + C \frac{tM}{R} 
+ C (t^{2\sigma/\theta} M^{2\sigma}+t) \| \chi_Rw\|_{X_t} 
+ \frac{4C}{1-\alpha} |B_R|^{\frac{1-\alpha}{2}} \int_0^t \| \chi_R w \|_{L^2}^\alpha,
$$
where $$
X_t = L^\infty(I_t, L^2(\mathbb{R}^d)) \cap L^q(I_t, L^r(\mathbb{R}^d)), \quad
\norm{y}_{X_t} = \norm{y}_{L^\infty_t L^2_x} + \norm{y}_{L^q_t L^r_x},
$$
and
$$
M := \max\bigl( \|v\|_{L^\infty_T H^1(\mathbb{R}^d)}, \norm{u}_{L^\infty_T H^1(\mathbb{R}^d)} \bigr).
$$
\end{lemma}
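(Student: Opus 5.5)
We work with the localized difference $\chi_R w$. Since $u,v$ solve \eqref{NLS} in $H^{-1}_{\rm loc}$ (Lemma~\ref{ghj}), $\chi_R w$ satisfies
$$
i\partial_t(\chi_R w)+\Delta(\chi_R w) = -2\nabla\chi_R\cdot\nabla w-(\Delta\chi_R)w-\lambda\chi_R\bigl(u\log|u|^2-v\log|v|^2\bigr)-\chi_R\bigl(ug(|u|^2)-vg(|v|^2)\bigr).
$$
Here $(q,r)$ is taken admissible with $r=2\sigma+2$, and $\theta$ is defined by $\frac1{q'}-\frac1q=\frac{2\sigma}{\theta}$, i.e.\ it is the Hölder exponent in time. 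We write the Duhamel formula for $\chi_R w$ and apply the Strichartz estimates of the preceding lemma (together with the homogeneous estimate for $\mathcal S(t)\chi_Rw_0$) in $X_t$. Each forcing term is measured in a suitable dual space $L^{\gamma'}_tL^{\rho'}_x$, and the terms are treated one at a time.

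\emph{Cut-off commutator terms.} We put these in $L^1_tL^2_x$ (the dual pair $(\infty,2)$). Since $|\nabla\chi_R|\lesssim R^{-1}$ and $|\Delta\chi_R|\lesssim R^{-2}$, and $\|\nabla w\|_{L^2}+\|w\|_{L^2}\le 2M$, these contribute $C tM/R$ (for $R\ge1$).

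\emph{Power term.} By \ref{A1}--\ref{A2} (Corollary~\ref{power10}), $|ug(|u|^2)-vg(|v|^2)|\lesssim (|u|^{2\sigma}+|v|^{2\sigma}+1)|w|$. For the part $(|u|^{2\sigma}+|v|^{2\sigma})|\chi_R w|$ we use the dual pair $(q',r')$ and Hölder: $\|\,|u|^{2\sigma}\chi_Rw\|_{L^{r'}}\le\|u\|_{L^r}^{2\sigma}\|\chi_Rw\|_{L^r}$. Sobolev gives $\|u\|_{L^r}\lesssim M$, and Hölder in time gives $t^{2\sigma/\theta}M^{2\sigma}\|\chi_Rw\|_{L^q_tL^r_x}$. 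The bounded part, coming from the behaviour of $g$ near $0$, is put in $L^1_tL^2_x$ and gives $t\|\chi_Rw\|_{L^\infty_tL^2_x}$.

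\emph{Logarithmic term.} This is the main obstacle, since $z\log|z|^2$ is not Lipschitz at $0$; this is why the splitting $g_1+g_2$ is introduced. The piece $g_2$ (which is supported on $|z|\ge 1/2$) satisfies $|g_2(u)-g_2(v)|\lesssim (1+|u|^{\delta}+|v|^{\delta})|w|$. It is treated exactly like the power term, and gives another contribution absorbed into $C(t^{2\sigma/\theta}M^{2\sigma}+t)\|\chi_Rw\|_{X_t}$, where one may take $\delta\le 2\sigma$ after bounding $|u|^\delta\le 1+|u|^{2\sigma}$.

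For $g_1$ I would prove the Hölder bound $|g_1(u)-g_1(v)|\le \frac{C}{1-\alpha}|u-v|^\alpha$ for every $\alpha\in(0,1)$. On the compact region this follows from $|z\log|z|^2|$-type estimates, using $|a\log a-b\log b|\lesssim_\alpha |a-b|^\alpha$ on $[0,1]$ with constant $\sim(1-\alpha)^{-1}$, and $\zeta\in\mathcal C^1$ handles the transition zone. We then place this term in $L^1_tL^2_x$ and apply Hölder on $B_R$:
$$
\|\chi_R|w|^\alpha\|_{L^2}\le |B_R|^{\frac{1-\alpha}{2}}\|\chi_Rw\|_{L^2}^\alpha,
$$
which uses $\chi_R\le\chi_R^\alpha$ since $0\le\chi_R\le1$. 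This yields the last term $\frac{4C}{1-\alpha}|B_R|^{\frac{1-\alpha}2}\int_0^t\|\chi_Rw\|_{L^2}^\alpha$.

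\emph{Conclusion.} Summing all contributions with the homogeneous term $\|\chi_Rw_0\|_{L^2}$ gives the inequality. The delicate points are the sharp $\alpha$-dependence of the Hölder constant for $g_1$ and justifying Strichartz/Duhamel for $H^{-1}_{\rm loc}$ solutions. For the latter we note that all right-hand terms are in $L^1_tL^2_x+L^{q'}_tL^{r'}_x$, so the Duhamel formula holds in $H^{-1}$.
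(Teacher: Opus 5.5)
Your proposal is correct and follows essentially the same route as the paper. It uses the same Duhamel decomposition of $\chi_R w$ into commutator, power, $g_2$ and $g_1$ pieces, the same dual Strichartz pairs ($L^1_tL^2_x$ for the commutator, the bounded part of $g$ and $g_1$; $L^{q'}_tL^{r'}_x$ with $r=2\sigma+2$ for the $|u|^{2\sigma}$-type parts including $g_2$), and the same H\"older step on $B_R$ via $\chi_R\le\chi_R^\alpha$. Your direct mean-value and transition-zone arguments for $g_2$ and $g_1$ replace the paper's citations of Lemmas~\ref{ii} and~\ref{i}, and are slightly more careful, since those lemmas as stated require both arguments to lie in the same region.
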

\begin{proof}
        From the equation we have
$$
i \partial_t (\chi_R u) + \chi_R \Delta u + \lambda \chi_R u \log|u|^2 +  \chi_R u g(|u|^{2}) = 0 \quad \text{in } H^{-1}(\mathbb{R}^d).
$$
Now, using
$$
\Delta (\chi_R u) = u \Delta \chi_R + 2 \nabla u \cdot \nabla \chi_R + \chi_R \Delta u \quad \text{in } H^{-1}(\mathbb{R}^d),
$$
we obtain
$$
i \partial_t (\chi_R u) + \Delta (\chi_R u) - u\Delta \chi_R - 2 \nabla u\cdot \nabla \chi_R
+ \lambda \chi_R u \log|u|^2 +  \chi_R u g(|u|^{2}) = 0 \quad \text{in } H^{-1}(\mathbb{R}^d).
$$
By Duhamel's formula, we get
\begin{align*}
\chi_R u(t) &= \mathcal{S}(t) \chi_R u_0 + i \lambda \int_0^t \mathcal{S}(t-z) \chi_R u \log|u|^2  dz
+ i  \int_0^t \mathcal{S}(t-z) \chi_R ug(|u|^{2})  dz \\
&\quad - i \int_0^t \mathcal{S}(t-z) \bigl( u \Delta \chi_R + 2 \nabla u \cdot \nabla \chi_R \bigr)  dz.
\end{align*}
Then
\begin{align*}
        \chi_Rw(t)&=\mathcal{S}(t) \chi_R w_0 +i\lambda\int_0^t\mathcal{S}(t-z)\chi_R(u\log\abs{u}^2-v\log\abs{v}^2)\mathrm{d}z+i\int_0^t\mathcal{S}(t-z)\chi_R(ug(\abs{u}^{2})-vg(\abs{v}^{2}))\mathrm{d}z\\
        &~~~-i\int_0^t\mathcal{S}(t-z)(w\Delta \chi_R+2\nabla w\nabla \chi_R)\mathrm{d}z\\
        &=\mathcal{S}(t) \chi_R w_0 +i\lambda\int_0^t\mathcal{S}(t-z)\chi_R(g_1(u)-g_1(v))\mathrm{d}z+i\lambda\int_0^t\mathcal{S}(t-z)\chi_R(g_2(u)-g_2(v))\mathrm{d}z\\
        &~~~+i\int_0^t\mathcal{S}(t-z)\chi_R(ug(\abs{u}^{2})-vg(\abs{v}^{2}))\mathrm{d}z-i\int_0^t\mathcal{S}(t-z)(w\Delta \chi_R+2\nabla w\nabla \chi_R)\mathrm{d}z\\
        &=\mathcal{S}(t) \chi_R w_0 +e_1(t)+e_2(t)+e_3(t)+e_4(t).
    \end{align*}
where $e_1, \dots, e_4$ correspond to the four integral terms above.

$\circ$ \textit{Estimate of $e_4(t)$:} By Strichartz estimates,
 $$\norm{e_4}_{X_t}\lesssim \int_0^t\norm{w(z)\Delta \chi_R+2\nabla w(z)\nabla \chi_R}_{L^2}\mathrm{d}z\lesssim \frac{1}{R^2}\int_0^t \norm{w(z)}_{L^2}\mathrm{d}z+\frac{1}{R}\int_0^t \norm{\nabla w(z)}_{L^2}\mathrm{d}z\lesssim \frac{tM}{R}.$$

$\circ$ \textit{Estimate of $e_3(t)$:} Define $\theta:=\frac{d\sigma^2}{2\sigma+2-d\sigma}q$. Using H\"older and Sobolev embedding, and Strichartz estimates,
 \begin{align*}
        \norm{e_3}_{X_t}&\lesssim \norm{\chi_R(u|u|^{2\sigma}-v|v|^{2\sigma})}_{L^{q'}_tL^{r'}}+t\norm{\chi_R(u-v)}_{L^{\infty}_tL^{2}}
        \\&\lesssim t^{2\sigma/\theta}(\norm{u}_{L^{\infty}H^1(\mathbb{R}^d)}^{2\sigma}+\norm{v}_{L^{\infty}H^1(\mathbb{R}^d)}^{2\sigma})\norm{\chi_R(u-v)}_{L^q_tL^r}+t\norm{\chi_R(u-v)}_{L^{\infty}_tL^{2}}\\
        &\lesssim t^{2\sigma/\theta} M^{2\sigma}\norm{\chi_R(u-v)}_{L^q_tL^r}+t\norm{\chi_R(u-v)}_{L^{\infty}_tL^{2}}\\
        &\lesssim (t^{2\sigma/\theta} M^{2\sigma}+t)\norm{\chi_R(u-v)}_{X_t}.
    \end{align*}

$\circ$ \textit{Estimate of $e_2(t)$:} Applying Lemma~\ref{ii} with $\alpha = 2\sigma$, we similarly get
$$
\| e_2 \|_{X_t} \lesssim \norm{\chi_R(g_2(u)-g_2(v))}_{L^{q'}_tL^{r'}}\lesssim  t^{2\sigma/\theta} M^{2\sigma} \| \chi_R(u-v) \|_{L^q_t L^r_x}.
$$

$\circ$ \textit{Estimate of $e_1(t)$:} Thanks to Lemma \ref{i}, we get 
 \begin{align*}
        \norm{e_1}_{X_t}&\lesssim \norm{\chi_R(g_1(u)-g_1(v))}_{L^1_tL^2}\\
        &\lesssim \frac{4}{1-\alpha}\norm{\chi_R\abs{w}^{\alpha}}_{L^1_tL^2}\\
        &\lesssim \frac{4}{1-\alpha}\int_0^t\norm{\chi_R^{1-\alpha}\chi_R^{\alpha}\abs{w(z)}^{\alpha}}_{L^2}dz\\
        &\lesssim \frac{4}{1-\alpha} \abs{B_R}^{\frac{1-\alpha}{2}}\int_0^t\norm{\chi_Rw(z)}^{\alpha}_{L^2}dz.
    \end{align*}
Hence,
\begin{equation*}
    \| \chi_R w \|_{X_t} \le \| \chi_R w_0 \|_{L^2} + C \frac{tM}{R} 
+ C (t^{2\sigma/\theta} M^{2\sigma}+t)\norm{\chi_R w}_{X_t} 
+ \frac{4C}{1-\alpha} |B_R|^{\frac{1-\alpha}{2}} \int_0^t \| \chi_R w \|_{L^2}^\alpha.\qedhere
\end{equation*}
\end{proof}

\begin{proof}[Proof of Theorem \ref{08}]Let $T>0$ and denote $w_n = u_n - u$, and set $M := \max\bigl( \|v\|_{L^\infty_T H^1(\mathbb{R}^d)}, K_T \bigr).$
By Lemma \ref{uniq} we have 
\begin{equation}
    \| \chi_R w_n \|_{X_t} \le \| \chi_R (u_{n,0} - u_0) \|_{L^2} + C \frac{tM}{R} 
+ C (t^{2\sigma/\theta} M^{2\sigma}+t)\norm{\chi_R w}_{X_t}
+ \frac{4C}{1-\alpha} |B_R|^{\frac{1-\alpha}{2}} \int_0^t \| \chi_R w_n \|_{L^2}^\alpha.
\end{equation}
Choosing $\tau \in [0,T]$ such that $C (\tau^{2\sigma/\theta} M^{2\sigma}+\tau) \le \frac12$, for $t \in [0,\tau]$ we have
\begin{equation}
    \| \chi_R w_n(t) \|_{L^2} \le \| \chi_R (u_{n,0} - u_0) \|_{L^2} + C \frac{\tau M}{R} + \frac{4C}{1-\alpha} |B_R|^{\frac{1-\alpha}{2}} \int_0^t \| \chi_R w_n \|_{L^2}^\alpha.
\end{equation}
Taking $\alpha = 1 - \frac{1}{\log(R)}$, we have $|B_R|^{\frac{1-\alpha}{2}} \lesssim 1$, so
\begin{equation}\label{enfin}
    \| \chi_R w_n(t) \|_{L^2} \le \| \chi_R (u_{n,0} - u_0) \|_{L^2} + C_1 \frac{\tau M}{R} + 4 C_2 \log(R) \int_0^t \| \chi_R w_n \|_{L^2}^{1 - \frac{1}{\log(R)}}.
\end{equation}
Using Lemma~\ref{prs} with $\beta = \frac{1}{\log(R)}$, we deduce for $t \in [0,\tau]$:
\begin{equation}\label{enfin1}
    \| \chi_R w_n(t) \|_{L^2} \le \Biggl( \biggl( \frac{C(\tau,M)}{R} + \|\chi_R( u_{n,0} - u_0) \|_{L^2} \biggr)^{\frac{1}{\log(R)}} + C_3 \tau \Biggr)^{\log(R)}.
\end{equation}
Fix $R_0>0$ and take $R > R_0$, then
\begin{equation}\label{enfin12}
    \| w_n(t) \|_{L^2(B_{R_0})} \le \| \chi_R w_n(t) \|_{L^2} \le \Biggl( \biggl( \frac{C(\tau,M)}{R} + \|\chi_R( u_{n,0} - u_0) \|_{L^2} \biggr)^{\frac{1}{\log(R)}} + C_3 \tau \Biggr)^{\log(R)}.
\end{equation}
Since $u_{n,0} \to u_0$ in $L^2_{\rm loc}(\mathbb{R}^d)$, we obtain
\begin{equation}
\limsup_{n \to \infty} \| w_n(t) \|_{L^2(B_{R_0})} \le \Bigg( \bigg( \frac{C(\tau,M)}{R} \bigg)^{\frac{1}{\log(R)}} + C_3 \tau \Bigg)^{\log(R)}.
\end{equation}
Noting that $\bigl( \frac{C(\tau,M)}{R} \bigr)^{\frac{1}{\log(R)}} \to e^{-1}$ as $R \to \infty$, we choose $\tau$ such that $C_3 \tau \le \frac12 - e^{-1}$. Then
$$
\Biggl( \biggl( \frac{C(\tau,M)}{R} \biggr)^{\frac{1}{\log(R)}} + C_3 \tau \Biggr)^{\log(R)} \to 0 \quad \text{as } R \to \infty.
$$
We conclude $\limsup_{n\to+\infty}\norm{w_n(t)}_{L^2(B_{R_0})}=0,$
that is
$$
w_n(t) \to 0 \quad \text{in } L^2(B_{R_0}) \text{ for all } t \in [0,\tau].
$$
Since $R_0>0$ is arbitrary, we deduce
$$
w_n(t) \to 0 \quad \text{in } L^2_{\mathrm{\rm loc}}(\mathbb{R}^d) \text{ for all } t \in [0,\tau].
$$
Since $\tau$ depends only on $M$, by repeating the same arguments, we can pass from $\tau$ to $2\tau$, and then to $T$ by finite induction.
\end{proof}
\begin{corollary}
     Let $T > 0$, and let $u, v \in L^{\infty}(0,T; H^1(\mathbb{R}^d))$ be two solutions of equation~\eqref{NLS}
(in the sense of distributions). Then $u = v$.
\end{corollary}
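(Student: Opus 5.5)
The uniqueness statement should follow from the estimate of Lemma~\ref{uniq} and the Osgood-type argument already used in the proof of Theorem~\ref{08}, applied with identical initial data. Let $u,v\in L^\infty(0,T;H^1(\mathbb{R}^d))$ be two distributional solutions with the same initial datum $u_0$. Set $w=u-v$ and
$$M:=\max\bigl(\|u\|_{L^\infty_TH^1},\|v\|_{L^\infty_TH^1}\bigr).$$

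The first step is to justify that Lemma~\ref{uniq} applies. Its proof uses only the equation in $H^{-1}(\mathbb{R}^d)$ after multiplication by $\chi_R$, and the Duhamel formula. For a distributional solution in $L^\infty_TH^1$, the equation gives $\partial_t u\in L^\infty(0,T;H^{-1}(\Omega))$ for every $\Omega\Subset\mathbb{R}^d$. This holds because the map $L$ from the proof of Lemma~\ref{ghj} is bounded from $H^1$ into $H^{-1}(\Omega)$. Hence $\chi_R u\in W^{1,\infty}(0,T;H^{-1}(\mathbb{R}^d))$, and in particular it is continuous in time with values in $H^{-1}$. Consequently the initial condition makes sense, and the Duhamel formula for $\chi_R u$ holds in $H^{-1}$. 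The right-hand side is then controlled in $X_t$ by Strichartz estimates exactly as in Lemma~\ref{uniq}. So the inequality of Lemma~\ref{uniq} holds with $\chi_R w_0=0$.

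Next I choose $\tau>0$ with $C(\tau^{2\sigma/\theta}M^{2\sigma}+\tau)\le\frac12$, so that the $\|\chi_R w\|_{X_t}$ term is absorbed into the left-hand side. With $\alpha=1-\frac1{\log R}$, this gives, for $t\in[0,\tau]$,
$$\|\chi_R w(t)\|_{L^2}\le C_1\frac{\tau M}{R}+4C_2\log(R)\int_0^t\|\chi_R w\|_{L^2}^{1-\frac1{\log R}}.$$
Lemma~\ref{prs} with $\beta=\frac1{\log R}$ then yields
$$\|w(t)\|_{L^2(B_{R_0})}\le\Bigl(\bigl(\tfrac{C(\tau,M)}{R}\bigr)^{1/\log R}+C_3\tau\Bigr)^{\log R}.$$
This is precisely \eqref{enfin12} with the initial-data term equal to zero.

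Finally I shrink $\tau$, depending only on $M$, so that $C_3\tau<\frac12-e^{-1}$. Letting $R\to\infty$, the base of the power tends to a number below $\frac12$ while the exponent tends to infinity, so the right-hand side tends to $0$. Hence $w=0$ on $B_{R_0}\times[0,\tau]$ for every $R_0$, that is, $u=v$ on $[0,\tau]$. Since $\tau$ depends only on $M$, I restart from time $\tau$, where the data coincide, and cover $[0,T]$ in finitely many steps.

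The main obstacle is the first step. One must check that a merely distributional $L^\infty_TH^1$ solution has enough time regularity to attain its initial value and to satisfy the localized Duhamel formula. The rest is a direct rerun of the proof of Theorem~\ref{08}.
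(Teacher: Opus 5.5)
Your proposal is correct and is essentially the paper's proof: the paper takes $w_0=0$ in Lemma~\ref{uniq} and reruns the argument of Theorem~\ref{08}, namely absorption on a short interval $[0,\tau]$, the choice $\alpha=1-\frac{1}{\log R}$, Lemma~\ref{prs}, the limit $R\to\infty$, and iteration in time. Your first step fills in a point the paper leaves implicit: using $\chi_R u\in W^{1,\infty}(0,T;H^{-1}(\mathbb{R}^d))$, it shows that distributional $L^\infty_TH^1$ solutions attain their initial data and satisfy the localized Duhamel formula.
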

\begin{proof}
 By taking $w_0=0$ in Lemma \ref{uniq}, and following the same proof as before, we obtain uniqueness.
\end{proof}
\subsection[L2 Critical case]{$L^2$--Critical case }\label{l2}
In this subsection, we consider the equation \eqref{NLSlp}. 
Let $\lambda \in \mathbb{R}^*$, $\mu>0$, and $\sigma=\frac{2}{d}$. 
Let $u_0 \in H^1(\mathbb{R}^d)$ be such that 
$$
\|u_0\|_{L^2} < \|Q\|_{L^2}.
$$
Let's again to consider the regularization equation in the critical case
\begin{equation}\label{eps}\tag{NLSlog$_\epsilon$P}
i\partial_t u_\varepsilon + \Delta u_\varepsilon + \lambda u_\varepsilon\log(\abs{u_\varepsilon}^2+\varepsilon)+\mu u_\varepsilon\abs{u_\varepsilon}^{4/d} = 0,
\qquad x \in \mathbb{R}^d,\ d\ge 1.
\end{equation}
There exist a maximal solution $u_\varepsilon\in \mathcal{C}([0,T_{\max}^\varepsilon),H^1(\mathbb{R}^d))\cap \mathcal{C}^1([0,T_{\max}^\varepsilon),H^{-1}(\mathbb{R}^d))$, and the mass and energy are conserved.  
Let us now prove that $T_{\max}^\varepsilon = +\infty$. 
Arguing as in the proof of Lemma~\ref{kpo}, we obtain that for all $t \in [0, T_{\max}^{\varepsilon})$,
 $$\frac{\mathrm{d}}{\mathrm{d}t}\left(\frac{1}{2}\norm{\nabla u_\varepsilon(t)}^2_2- \frac{d}{2(d+2)}\int \abs{u_\varepsilon(t)}^{\frac{4}{d}+2}\mathrm{d}x\right)\leq 3\abs{\lambda} \norm{\nabla u_\varepsilon(t)}_{L^2}^2.$$
Integrating in time, we obtain
$$\frac{1}{2}\norm{\nabla u_\varepsilon(t)}^2_2- \frac{d}{2(d+2)}\int \abs{u_\varepsilon(t)}^{\frac{4}{d}+2}\mathrm{d}x\leq 3\abs{\lambda} \int_0^t\norm{\nabla u_\varepsilon(s)}_{L^2}^2\mathrm{d}s+C(\norm{u_0}_{H^1}).$$
On the other hand, we already know that $$\frac{1}{\frac{4}{d}+2}\norm{u_\varepsilon(t)}_{\frac{4}{d}+2}^{\frac{4}{d}+2}\leq \frac12\bigg(\frac{\norm{u_0}_{L^2}}{\norm{Q}_{L^2}}\bigg)^{4/d}\norm{\nabla u_\varepsilon(t)}_{L^2}^2.$$
Then, $$\norm{\nabla u_\varepsilon(t)}_{L^2}^2\leq C_1\int_0^t \norm{\nabla u_\varepsilon(s)}^2_{L^2}\mathrm{d}s+C_2.$$
Thus, by Gronwall's lemma, we conclude that
$$
\sup_{\substack{0<\varepsilon<1}} \|u_\varepsilon(t)\|_{H^1} < +\infty.
$$
We proceed again by the same argument as in the sub-critical case and conclude that there exists a unique solution $u\in \mathcal{C}(\mathbb{R}_+,H^1(\mathbb{R}^d))\cap \mathcal{C}^1(\mathbb{R}_+,H^{-1}_{\rm loc}(\mathbb{R}^d))$ to \eqref{NLSlp}. Moreover, the mass is conserved along the flow.
\begin{remark}
We observe that we are always dealing with the global Cauchy theory, and we show that $T_{\max}^\varepsilon = +\infty$. In fact, the problem of the local Cauchy theory is not clear at all when the mass is greater than $\norm{Q}_2$ or when the nonlinearity is $L^2$--supercritical. To address this, one needs to ensure that $T_{\max}^\varepsilon \to T^* > 0$, which is not obvious.
\end{remark}
\section{Construction of the solution in \texorpdfstring{$W_1(\mathbb{R}^d)$}{W1}}\label{2em}
Throughout this section, for \eqref{NLS}, we assume that $\lambda \in \mathbb{R}^*$, that \ref{A1}--\ref{A2} are satisfied, and that either condition \ref{B2} holds with $0 < \sigma < \frac{2}{d-2}$, or condition \ref{B1} holds with $0 < \sigma < \frac{2}{d}$.
We also consider the mass-critical case $\sigma = \frac{2}{d}$ with $\|u_0\|_2 = \|Q\|_2$ and $\mu > 0$ for \eqref{NLSlp}.

\subsection{Bound and continuity in \texorpdfstring{$W_1(\mathbb{R}^d)$}{W1}}
Suppose that $u_0 \in W_1(\mathbb{R}^d)$. In this subsection, we show that the solution $u$ obtained belongs to $\mathcal{C}(\mathbb{R}_+, W_1(\mathbb{R}^d))$.
\begin{lemma}\label{rtt}
We have
    $$E_\varepsilon(u_0)\longrightarrow E_0(u_0).$$
\end{lemma}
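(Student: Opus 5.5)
Since $E_\varepsilon$ and $E_0$ share the gradient term and the term $-\int G(|u_0|^2)$, which do not depend on $\varepsilon$, the claim reduces to two limits:
$$
-\frac{\lambda}{2}\int |u_0|^2\log(|u_0|^2+\varepsilon)\,\mathrm{d}x\longrightarrow -\frac{\lambda}{2}\int |u_0|^2\log|u_0|^2\,\mathrm{d}x,
\qquad
\lambda\int \eta_\varepsilon(|u_0|)\,\mathrm{d}x\longrightarrow \frac{\lambda}{2}\int|u_0|^2\,\mathrm{d}x .
$$
Here $E_0$ is understood as the energy $E$ from the introduction, and we write $\eta_\varepsilon$ to stress the dependence of $\eta$ on $\varepsilon$. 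Both limits will follow from dominated convergence once $\varepsilon$-independent integrable majorants are produced. This is exactly where $u_0\in W_1(\mathbb{R}^d)$ is used: we have $|u_0|^2\log|u_0|^2\in L^1$, not merely $u_0\in H^1$.

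\textbf{The $\eta$ term.} We have
$$
\eta_\varepsilon(s)=\frac12\int_0^{s^2}\frac{z}{z+\varepsilon}\,\mathrm{d}z .
$$
For each fixed $s$ the integrand increases to $1$ as $\varepsilon\to0$, so $\eta_\varepsilon(s)\uparrow s^2/2$ by monotone convergence. Moreover $0\le\eta_\varepsilon(|u_0|)\le |u_0|^2/2\in L^1$ by Remark~\ref{mmm}. Dominated convergence then gives the second limit.

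\textbf{The logarithmic term.} Pointwise, $|u_0|^2\log(|u_0|^2+\varepsilon)\to|u_0|^2\log|u_0|^2$ as $\varepsilon\to0$, with the convention that the value is $0$ where $u_0=0$. For the majorant we split into two regions.

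$\circ$ On $\{|u_0|\ge1\}$: for $0<\varepsilon<1$,
$$
0\le |u_0|^2\log(|u_0|^2+\varepsilon)\le |u_0|^2\log(2|u_0|^2)\le |u_0|^2\log 2+|u_0|^2\log|u_0|^2 .
$$
The right-hand side is integrable because $u_0\in L^2$ and, by Remark~\ref{mmm}, $|u_0|^2\log(|u_0|^2+1)\lesssim|u_0|^{2+\delta}\in L^1$.

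$\circ$ On $\{|u_0|<1\}$: the quantity $\log(|u_0|^2+\varepsilon)$ is bounded above by $\log 2$. It is bounded below by $\log|u_0|^2$, since $\log$ is increasing. Hence
$$
\big||u_0|^2\log(|u_0|^2+\varepsilon)\big|\le |u_0|^2\log 2+\big||u_0|^2\log|u_0|^2\big| .
$$
The last term is integrable because the definition of $W_1(\mathbb{R}^d)$ controls $\int_{\{|u|<1\}}|u|^2|\log|u|^2|\,\mathrm{d}x$; see Appendix~\ref{C}.

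Dominated convergence then yields the first limit, and adding the two limits gives $E_\varepsilon(u_0)\to E_0(u_0)$.

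\textbf{Main obstacle.} The only delicate point is the small-amplitude region $\{|u_0|<1\}$. There the majorant cannot be built from Sobolev embeddings, because $\log$ is unbounded near $0$. The argument rests on the monotone lower bound $\log(|u_0|^2+\varepsilon)\ge\log|u_0|^2$, combined with the Orlicz integrability coming from $W_1$. The large-amplitude region only needs $u_0\in H^1$.
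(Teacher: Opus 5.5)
Your proof is correct and follows essentially the same route as the paper: dominated convergence for both $\varepsilon$-dependent terms. For the $\eta_\varepsilon$ term you use the majorant $\eta_\varepsilon(|u_0|)\le |u_0|^2/2$, and for the logarithmic term a majorant built from $|u_0|^2\bigl|\log|u_0|^2\bigr|\in L^1$ (coming from $u_0\in W_1$) together with $|u_0|^2$ and $|u_0|^{2+\delta}$. The only cosmetic difference is that you split into $\{|u_0|<1\}$ and $\{|u_0|\ge 1\}$, whereas the paper uses the single global bound $|\log(x+\varepsilon)|\le C(\delta)(x^{\delta}+1)+|\log x|$.
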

\begin{proof}
    We know that for all $x > 0$, $\delta > 0$, and $0 < \varepsilon < 1$, we have
$$
|\log(x + \varepsilon)| \leq \frac{(x+1)^{\delta}}{\delta} + |\log(x)| \leq C(\delta)(x^{\delta}+1) + |\log(x)|.
$$
In particular, for $\delta + 2 \in (2, 2^*)$, we have
$$
\bigl| |u_0|^2 \log(|u_0|^2 + \varepsilon) \bigr| \leq C(\delta)(|u_0|^{\delta+2} + |u_0|^2) + \bigl| |u_0|^2 \log|u_0|^2 \bigr| \in L^1(\mathbb{R}^d).
$$
Moreover, we have
$$
\left| \int_0^{|u_0|} \frac{z^3}{z^2+\varepsilon}  \mathrm{d}z \right| \leq \frac{|u_0|^2}{2} \in L^1(\mathbb{R}^d),
$$
and therefore, by the dominated convergence theorem, we obtain
\begin{equation*}
    E_\varepsilon(u_0) \longrightarrow E_0(u_0) := E(u_0).\qedhere
\end{equation*}
\end{proof}
\begin{remark}
  The energy $E_\varepsilon(u)$ for $\varepsilon = 0$ is given by:
$$
E(u) := E_0(u) = \frac{1}{2} \int |\nabla u|^2  \mathrm{d}x - \frac{\lambda}{2} \int |u|^2 \log|u|^2  \mathrm{d}x + \frac{\lambda}{2} \int |u|^2  \mathrm{d}x -  \int G(|u|^2)\mathrm{d}x.
$$
We then have
$$
DE(u)(\phi) = \Re \int \nabla u \cdot \overline{\nabla \phi}  \mathrm{d}x - \lambda  \Re \int u \log|u|^2  \overline{\phi}  \mathrm{d}x -   \Re \int u g(|u|^{2})  \overline{\phi}  \mathrm{d}x.
$$
This shows that the equation \eqref{NLS} can be written in Hamiltonian form:
$$
i  \partial_t u = \partial E(u).
$$
\end{remark}

\begin{proposition}\label{u}
The function $u$, solution of equation \eqref{NLS} defined in Lemma \ref{ghj}, satisfies
$$u\in \mathcal{C}(\mathbb{R}_+, W_1(\mathbb{R}^d)).$$
In addition, if $\lambda>0$, we have
$$
u \in \mathcal{C}_b(\mathbb{R}_+, W_1(\mathbb{R}^d)),
$$
and the energy is conserved:
$$
E(u(t)) = E(u_0), \quad \text{for all } t \in \mathbb{R}_+.
$$
\end{proposition}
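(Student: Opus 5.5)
Since $W_1(\mathbb{R}^d)=\{u\in H^1:\ |u|^2|\log|u|^2|\in L^1\}$ and Lemma \ref{continue} already gives $u\in\mathcal{C}(\mathbb{R}_+,H^1)$, the plan is to control the only new quantity, $\int_{\{|u|<1\}}|u|^2|\log|u|^2|\,\mathrm{d}x$. The large-amplitude part $\int_{\{|u|\ge1\}}|u|^2\log|u|^2$ is bounded by $C(\delta)\|u\|_{L^{2+\delta}}^{2+\delta}$, so Sobolev and Lemma \ref{continue} handle it. The core is a uniform-in-$\varepsilon$ bound on
\[
A_\varepsilon(t):=-\int|u_\varepsilon(t)|^2\log(|u_\varepsilon(t)|^2+\varepsilon)\mathbf 1_{\{|u_\varepsilon|^2+\varepsilon<1\}}\,\mathrm{d}x\ \ge 0 .
\]

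First we bound the full integral $\int|u_\varepsilon(t)|^2\log(|u_\varepsilon(t)|^2+\varepsilon)$. By Lemma \ref{jamila} its time derivative is at most $5\|\nabla u_\varepsilon\|_2^2\le 5N_T^2$, so for $t\le T$ it stays within $5TN_T^2$ of its value at $t=0$. The initial value converges, as shown in Lemma \ref{rtt}, using $u_0\in W_1$. Subtracting the uniformly bounded positive part gives $\sup_{\varepsilon,\,t\le T}A_\varepsilon(t)\le C(T)$. This works for either sign of $\lambda$, because Lemma \ref{jamila} uses only $\Im(\bar u\Delta u)$.

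Next we pass to the limit. By Lemma \ref{23}, $u_\varepsilon(t)\to u(t)$ in $L^2$, hence a.e. along a subsequence. Fatou's lemma applied to the nonnegative integrands $-|u_\varepsilon|^2\log(|u_\varepsilon|^2+\varepsilon)\mathbf 1$ then gives $\int_{\{|u|<1\}}|u|^2|\log|u|^2|\le C(T)$, so $u\in L^\infty_{\rm loc}(\mathbb{R}_+,W_1)$.

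Continuity in $W_1$ is the main obstacle. We must show that $t\mapsto\int|u(t)|^2\log|u(t)|^2$ restricted to small amplitudes is continuous, i.e. that $|u|^2|\log|u|^2|$ is uniformly integrable at infinity. The first route is a direct argument on the limit equation: the cut-off computation of Lemma \ref{23}, applied to $\Phi(|u|^2)=|u|^2\log|u|^2$ rather than $|u|^2$, should give the analogue of Lemma \ref{jamila} for $u$ itself,
\[
\Big|\tfrac{\mathrm{d}}{\mathrm{d}t}\int\chi_R^2|u|^2\log|u|^2\Big|\lesssim\|\nabla u\|_2^2+\tfrac1R(\cdots).
\]
This would give Lipschitz continuity of the functional $t\mapsto\int|u|^2\log|u|^2$. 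The computation needs justification because $\log$ is singular at $0$, so we would regularize with $\log(|u|^2+\varepsilon)$, take the limit in time-integrated form, and use $u\in\mathcal{C}(\mathbb{R}_+,H^1)$. If that fails, the fallback is to approximate via the $\varepsilon$-problems: continuity of $t\mapsto\int|u_\varepsilon|^2\log(|u_\varepsilon|^2+\varepsilon)$ with a uniform modulus (Lipschitz constant $5N_T^2$ from Lemma \ref{jamila}), together with pointwise-in-$t$ convergence of the functional, obtained from $L^2$ convergence plus the uniform tail bound. Either route gives continuity of the norm-defining functional. Combined with $H^1$-continuity, a.e. convergence along sequences $t_n\to t$, and a Brezis--Lieb type argument for the convex Orlicz modular, this yields $u\in\mathcal{C}(\mathbb{R}_+,W_1)$.

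For $\lambda>0$ we use energy conservation. $E_\varepsilon(u_\varepsilon(t))=E_\varepsilon(u_0)\to E(u_0)$ by Lemma \ref{rtt}. In $E_\varepsilon$, the term $-\tfrac\lambda2\int|u|^2\log(\cdot)$ restricted to small amplitudes is nonnegative, and all other terms converge by Lemma \ref{1.2}, Corollary \ref{rq2}, Lemma \ref{23} and weak lower semicontinuity of $\|\nabla u\|_2$. Fatou then gives $E(u(t))\le E(u_0)$. Applying the same argument backward from time $t$, using uniqueness and reversibility, gives the reverse inequality, so $E(u(t))=E(u_0)$. Finally, energy and mass conservation, together with Gagliardo--Nirenberg (subcritical case, or $\|u_0\|_2<\|Q\|_2$ in the critical case) and the positivity of the small-amplitude log term when $\lambda>0$, bound $\|u(t)\|_{W_1}$ uniformly in $t$, giving $u\in\mathcal{C}_b(\mathbb{R}_+,W_1)$.
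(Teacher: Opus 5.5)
Your $\varepsilon$-uniform bound on $A_\varepsilon(t)$ (Lemma \ref{jamila} plus Fatou) and your energy argument for $\lambda>0$ are sound and essentially match the paper. The gap is in the continuity of $t\mapsto\int|u(t)|^2\log|u(t)|^2\,\mathrm{d}x$ when $\lambda<0$. For $\lambda>0$ you could read continuity off energy conservation, since every other term of $E(u(t))$ is continuous, but you do not, and for $\lambda<0$ that route does not exist.

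Your fallback needs, at every fixed $t>0$, the convergence $\int|u_\varepsilon(t)|^2\log(|u_\varepsilon(t)|^2+\varepsilon)\,\mathrm{d}x\to\int|u(t)|^2\log|u(t)|^2\,\mathrm{d}x$. You attribute it to ``$L^2$ convergence plus the uniform tail bound'', but you have no tail bound. The estimate $\sup_{\varepsilon,\,t\le T}A_\varepsilon(t)\le C(T)$ bounds the integrals; it gives no uniform integrability of $|u_\varepsilon(t)|^2|\log|u_\varepsilon(t)|^2|$ at spatial infinity. Consider a flat, widely spread bump of height $e^{-1/(2m)}$ on a ball of measure $me^{1/m}$. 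Its $L^2$ and $\dot H^1$ norms tend to $0$ as $m\to0$, while $\int|v|^2|\log|v|^2|\,\mathrm{d}x\approx1$. Nothing you know excludes such a piece in $u_\varepsilon(t)-u(t)$, so at $t>0$ only Fatou's inequality survives. This is exactly the convergence the paper says is unknown in $W_1$; it recovers it only in $\Sigma_\alpha$ (Lemma \ref{xfg}), where the weight controls the tail.

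Your primary route, differentiating $\int\chi_R^2|u|^2\log(|u|^2+\varepsilon)$ along the limit solution, is plausible but leaves the whole difficulty unaddressed. At this stage $u$ solves the equation only in $H^{-1}(\Omega)$, and $u\log(|u|^2+\varepsilon)$ need not lie in $H^1$ when $u$ is unbounded. You would therefore need a large-amplitude truncation, a chain rule in $L^\infty H^1\cap W^{1,\infty}H^{-1}(\Omega)$, and a careful ordering of the limits.

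The missing idea is that convergence of the functional is needed only at $t=0$, where it holds because $u_\varepsilon(0)=u_0$ (Lemma \ref{rtt}). Anchor your Lipschitz bound there, and apply Fatou at time $t$ together with Lemma \ref{xc}. This gives
\[
\int|F_1(u(t))|\,\mathrm{d}x\le 5tN_T^2+\int|F_1(u_0)|\,\mathrm{d}x+\int F_2(u(t))\,\mathrm{d}x-\int F_2(u_0)\,\mathrm{d}x ,
\]
so $\limsup_{t\to0}\int|F_1(u(t))|\,\mathrm{d}x\le\int|F_1(u_0)|\,\mathrm{d}x$. Fatou along $u(t)\to u_0$ in $L^2$ gives the reverse $\liminf$ inequality, so $\int|F_1(u(t))|\,\mathrm{d}x$ is continuous at $t=0$. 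Continuity at an arbitrary $t_0$ then follows from uniqueness: restart the construction from $u(t_0)\in W_1$, and use $\overline{u(t_0-s)}$ for left continuity. Your $\varepsilon$-uniform Lipschitz modulus is the right ingredient. It just has to be used from the initial time, not combined with a convergence at every $t$ that you cannot prove.
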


The main idea to prove this proposition comes from the work of Hayashi and Ozawa \cite{3}. For this, we introduce a function $\rho \in \mathcal{C}_c^{\infty}(\mathbb{C}, \mathbb{R})$ satisfying
\begin{equation}
\rho(z) =
\begin{cases}
1 & \text{if } |z| \leq \tfrac{1}{4}, \\
0 & \text{if } |z| \geq \tfrac{1}{2},
\end{cases}
\quad \text{and} \quad 0 \leq \rho(z) \leq 1 \text{ for all } z \in \mathbb{C}.
\end{equation}
Then, for $\varepsilon \in (0,1/2)$, we define the following functions:
\begin{align}
F_{1\varepsilon}(u) &= \rho(u) |u|^2 \log\big( |u|^2 + \varepsilon \big), \\
F_{2\varepsilon}(u) &= (1 - \rho(u)) |u|^2 \log\big(|u|^2 + \varepsilon \big),
\end{align}
and in the limit as $\varepsilon \to 0$, we obtain
\begin{align}
F_1(u) &= \rho(u) |u|^2 \log|u|^2, \\
F_2(u) &= (1 - \rho(u)) |u|^2 \log|u|^2.
\end{align}
In this case, our regularized energy is given by:
\begin{equation}\label{E}
E_\varepsilon(u) := \frac{1}{2} \int |\nabla u|^2  \mathrm{d}x - \frac{\lambda}{2} \int F_{1\varepsilon}(u)  \mathrm{d}x - \frac{\lambda}{2} \int F_{2\varepsilon}(u)  \mathrm{d}x + \lambda \int \eta_\varepsilon(|u|)  \mathrm{d}x - \int G(|u|^2)\mathrm{d}x.
\end{equation}
Using the elementary inequality from Lemma \ref{LN}, we deduce
$$
\int |F_2(u)|  \mathrm{d}x \leq \frac{1}{\delta} \int |u|^{2+2\delta}  \mathrm{d}x.
$$
Choosing $\delta > 0$ such that $2+2\delta \in (2, 2^*)$, we obtain, by the Sobolev embedding,
$$
\int |F_2(u)|  \mathrm{d}x \leq C(\delta) \| u \|_{H^1(\mathbb{R}^d)}^{2+2\delta}.
$$
Finally, since $u \in H^1(\mathbb{R}^d)$, we have
$$
u \in W_1(\mathbb{R}^d) \Longleftrightarrow F_1(u) \in L^1(\mathbb{R}^d).
$$
\begin{lemma}\label{gui}
For all $\alpha >0$, there exists $C(\alpha) > 0$ such that for all $z_1, z_2 \in \mathbb{C}$, we have
    \begin{align*}
         |F_{2\varepsilon}(z_1) - F_2(z_2)| &\leq 16\varepsilon \abs{z_1}^2+C(\alpha) (|z_1|^{1+\alpha} + |z_2|^{1+\alpha}) |z_1 - z_2|.
    \end{align*}
\end{lemma}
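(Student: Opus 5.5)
The plan is to split the difference by inserting the intermediate term $F_2(z_1)$:
$$
|F_{2\varepsilon}(z_1)-F_2(z_2)| \le |F_{2\varepsilon}(z_1)-F_2(z_1)| + |F_2(z_1)-F_2(z_2)|.
$$
The first piece produces the $16\varepsilon|z_1|^2$ term and the second produces the Lipschitz-type term. The key structural fact is that $1-\rho$ vanishes on $\{|z|\le \tfrac14\}$. Both $F_{2\varepsilon}$ and $F_2$ therefore live on the region $|z|\ge\tfrac14$, where the logarithm is harmless.

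\emph{First piece.} If $|z_1|\le \tfrac14$, both terms vanish. Otherwise we have
$$
|F_{2\varepsilon}(z_1)-F_2(z_1)| = (1-\rho(z_1))\,|z_1|^2 \log\Bigl(1+\frac{\varepsilon}{|z_1|^2}\Bigr) \le |z_1|^2\cdot\frac{\varepsilon}{|z_1|^2}=\varepsilon .
$$
Here we use $\log(1+x)\le x$. Since $|z_1|^2\ge \tfrac1{16}$ on this region, $\varepsilon\le 16\varepsilon|z_1|^2$, which is the claimed bound.

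\emph{Second piece.} View $F_2$ as a function on $\mathbb{R}^2$. It is $\mathcal{C}^1$, because it vanishes identically near the origin and is smooth away from it. We will show the pointwise gradient bound
$$
|\nabla F_2(w)|\le C(\alpha)|w|^{1+\alpha}\quad\text{for all } w\in\mathbb{C}.
$$
For $|w|\le\tfrac14$ the gradient is zero. For $|w|\ge\tfrac14$ we compute
$$
\nabla F_2 = -\nabla\rho\,|w|^2\log|w|^2 + (1-\rho)\bigl(2w\log|w|^2+2w\bigr)
$$
(in real notation). The first term is supported in the compact annulus $\tfrac14\le|w|\le\tfrac12$, so it is bounded there. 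That bound is dominated by $C|w|^{1+\alpha}$ because $|w|^{1+\alpha}\ge 4^{-1-\alpha}$ on the annulus. The second term is at most $2|w|(|\log|w|^2|+1)$. On $|w|\ge\tfrac14$ we have $|\log|w|^2|+1\le C(\alpha)|w|^{\alpha}$, since the logarithm grows slower than any positive power at infinity and is bounded near $|w|=\tfrac14$. This is exactly where $\alpha>0$ is needed.

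With the gradient bound in hand, apply the mean value inequality along the segment $[z_1,z_2]$. Every point $w$ on it satisfies $|w|\le\max(|z_1|,|z_2|)$, which gives
$$
|F_2(z_1)-F_2(z_2)|\le C(\alpha)\max(|z_1|,|z_2|)^{1+\alpha}|z_1-z_2| \le C(\alpha)(|z_1|^{1+\alpha}+|z_2|^{1+\alpha})|z_1-z_2|.
$$
Adding the two pieces gives the lemma.

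The only genuinely delicate point is the uniform gradient bound. It must hold both near the inner edge of the support of $1-\rho$, which is handled by compactness and the lower bound $|w|\ge\tfrac14$, and at infinity, which is handled by absorbing the logarithm into $|w|^\alpha$. The rest is elementary.
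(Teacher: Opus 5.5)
Your proof is correct and takes essentially the same route as the paper. You insert $F_2(z_1)$ and bound $|F_{2\varepsilon}(z_1)-F_2(z_1)|\le(1-\rho(z_1))\varepsilon\le 16\varepsilon|z_1|^2$, using $|z_1|\ge\frac14$ on the support of $1-\rho$. You then control $F_2(z_1)-F_2(z_2)$ along the segment $[z_1,z_2]$ via the pointwise bound $|\nabla F_2(w)|\lesssim_\alpha |w|^{1+\alpha}$. Your version also supplies the justification the paper leaves implicit: boundedness of the $\nabla\rho$ term on the compact annulus $\frac14\le|w|\le\frac12$, and absorption of the logarithm into $|w|^\alpha$.
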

\begin{proof}
 We have
$$|F_{2\varepsilon}(z_1) - F_2(z_2)|=\abs{F_{2\varepsilon}(z_1)-F_2(z_1)}+\abs{F_2(z_1)-F_2(z_2)}\leq 16\varepsilon\abs{z_1}^2+\abs{F_2(z_1)-F_2(z_2)}.$$ 
For the first term, we have
$$
|F_{2\varepsilon}(z_1) - F_2(z_1)|=\abs{(1-\rho(z_1))\abs{z_1}^2\log(1+\frac{\varepsilon}{\abs{z_1}^2})} \leq (1-\rho(z_1)) \varepsilon \leq 16\varepsilon |z_1|^2.
$$
For the second term, we let $z(s):=z_2+s(z_2-z_1)$. Then we have $$F_2(z_1)-F_2(z_2)=\int_0^1\mathrm{d}F_2(z(s))(z_1-z_2)\mathrm{d}s.$$
and $$\mathrm{d}F_2(z(s))(z_1-z_2)=-\nabla\rho(z(s))\cdot\abs{z(s)}^2\log\abs{z(s)}^2(z_1-z_2)+2(1-\rho(z(s)))(\log\abs{z(s)}^2+1)\Re(\overline{z(s)}(z_1-z_2)).$$
Note that $$\operatorname{supp}\nabla\rho\subseteq\{z\in \mathbb{C}\mid \tfrac{1}{4}\leq \abs{z}\leq\tfrac12\}\quad \text{ and } \quad \operatorname{supp}(1-\rho)\subseteq \{z\in \mathbb{C}\mid  \abs{z}\geq\tfrac14\}.$$
Then, $$\abs{\nabla\rho(z(s))\cdot\abs{z(s)}^2\log\abs{z(s)}^2(z_1-z_2)}\lesssim_\alpha \abs{z(s)}^{1+\alpha}\abs{z_1-z_2}\lesssim \big(\abs{z_1}^{1+\alpha}+\abs{z_2}^{1+\alpha}\big)\abs{z_1-z_2},$$
and \begin{align*}
    \abs{(1-\rho(z(s)))\big(\log\abs{z(s)}^2+1\big)\Re(\overline{z(s)}(z_1-z_2))}&\leq 2\Big((1-\rho(z(s)))\big(\abs{z(s)\log\abs{z(s)}}+\abs{z(s)}\big)\Big)\abs{z_1-z_2}\\ &\lesssim_\alpha\abs{z(s)}^{1+\alpha}\abs{z_1-z_2}\\ &\lesssim_\alpha\Big(\abs{z_1}^{1+\alpha}+\abs{z_2}^{1+\alpha}\Big)\abs{z_1-z_2}.\qedhere
\end{align*}
\end{proof}
Before proving the proposition, we state the following lemma similarly as in \cite{3}:
\begin{lemma}\label{xc}
    Up to a subsequence, we have:
$$
\int \eta_{\varepsilon}(|u_\varepsilon(t)|)\mathrm{d}x \to \frac{1}{2} \int |u(t)|^2\mathrm{d}x , \quad 
\int F_{2\varepsilon}(u_\varepsilon(t)) \mathrm{d}x \to \int F_2(u(t))\mathrm{d}x ,
$$
for all $t \in \mathbb{R}_+$.

\end{lemma}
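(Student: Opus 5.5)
Both convergences will follow from the strong convergences established in Section~\ref{1er}. Fix $t\in\mathbb{R}_+$ and pass to a subsequence along which Lemma~\ref{23} holds, i.e. $u_\varepsilon(t)\to u(t)$ in $L^2(\mathbb{R}^d)$, and Corollary~\ref{rq2} holds, i.e. $u_\varepsilon(t)\to u(t)$ in $L^{2\sigma+2}(\mathbb{R}^d)$. We also use the uniform bound \eqref{nt}. By Gagliardo--Nirenberg interpolation between $L^2$ and the $H^1$ bound, we get $u_\varepsilon(t)\to u(t)$ in $L^p(\mathbb{R}^d)$ for every $p\in[2,2^*)$, and for every $p\in[2,\infty)$ when $d=1,2$.

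\textbf{The $\eta_\varepsilon$ term.} We split
\[
\int\eta_\varepsilon(|u_\varepsilon|)\,\mathrm{d}x-\tfrac12\int|u|^2\,\mathrm{d}x=\int\big(\eta_\varepsilon(|u_\varepsilon|)-\eta_\varepsilon(|u|)\big)\mathrm{d}x+\int\big(\eta_\varepsilon(|u|)-\tfrac12|u|^2\big)\mathrm{d}x .
\]
For the first integral, $0\le\eta_\varepsilon'(s)=\frac{s^3}{s^2+\varepsilon}\le s$. Hence $|\eta_\varepsilon(a)-\eta_\varepsilon(b)|\le\max(a,b)|a-b|$, and by Cauchy--Schwarz the first integral is bounded by $(\|u_\varepsilon\|_2+\|u\|_2)\|u_\varepsilon-u\|_2\to0$. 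For the second integral, we have
\[
\tfrac12 s^2-\eta_\varepsilon(s)=\int_0^s\frac{\varepsilon z}{z^2+\varepsilon}\,\mathrm{d}z,
\]
which lies in $[0,\tfrac12 s^2]$ and tends to $0$ pointwise as $\varepsilon\to0$. Dominated convergence with dominating function $\tfrac12|u|^2\in L^1$ then gives convergence to $0$.

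\textbf{The $F_{2\varepsilon}$ term.} We apply Lemma~\ref{gui} with $z_1=u_\varepsilon(t)$, $z_2=u(t)$ and integrate:
\[
\Big|\int F_{2\varepsilon}(u_\varepsilon)-\int F_2(u)\Big|\le16\varepsilon\|u_0\|_2^2+C(\alpha)\int\big(|u_\varepsilon|^{1+\alpha}+|u|^{1+\alpha}\big)|u_\varepsilon-u|\,\mathrm{d}x .
\]
The first term tends to $0$ because the mass is conserved. For the second term, choose $\alpha>0$ small, namely $2+\alpha<2^*$, and apply Hölder with exponents $\frac{2+\alpha}{1+\alpha}$ and $2+\alpha$. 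This bounds it by
\[
C(\alpha)\big(\|u_\varepsilon\|_{2+\alpha}^{1+\alpha}+\|u\|_{2+\alpha}^{1+\alpha}\big)\|u_\varepsilon-u\|_{2+\alpha}.
\]
The first factor is bounded by Sobolev and $N_T$, and the last factor tends to $0$ by the $L^{2+\alpha}$ convergence above.

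\textbf{Main point to check.} The argument relies on strong global convergence in $L^{2+\alpha}$, not merely $L^2_{\rm loc}$. This is exactly where Lemma~\ref{23} (mass conservation upgrading weak to strong $L^2$ convergence) is essential. One must also make sure the subsequence can be taken independent of $t$. The cleanest route is to note that Lemma~\ref{23} gives convergence for all $t$ along the same subsequence, since it comes from the $\mathcal{C}_{\rm loc}(\mathbb{R}_+,L^2_{\rm loc})$ extraction of Lemma~\ref{limu} together with norm equality. Alternatively, one can work with a fixed $t$.
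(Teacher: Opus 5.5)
Your proof is correct and follows the paper's argument. It uses the same splitting with dominated convergence for the $\eta_\varepsilon$ term, and Lemma~\ref{gui} together with mass conservation and the strong $L^2$ convergence of Lemma~\ref{23} for the $F_{2\varepsilon}$ term. The only cosmetic difference is that the paper takes $\alpha=\sigma$ and applies Cauchy--Schwarz, with $|u_\varepsilon|^{1+\sigma}$ in $L^2$ (bounded via $H^1\hookrightarrow L^{2\sigma+2}$) and $u_\varepsilon-u$ in $L^2$, so it needs only $L^2$ convergence rather than your Gagliardo--Nirenberg upgrade to $L^{2+\alpha}$.
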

\begin{proof}
    \begin{itemize}
    \item Recall that 
    $$
    \eta_\varepsilon(|u_\varepsilon|) = \int_0^{|u_\varepsilon|} \frac{z^3}{z^2 + \varepsilon} \mathrm{d}z.
    $$
    For all $z_1, z_2 \in \mathbb{C}$, we have
    \begin{align*}
        \left| \int_0^{|z_2|} \frac{z^3}{z^2 + \varepsilon} \mathrm{d}z - \frac{1}{2}|z_1|^2 \right| 
        &\leq \left| \int_{|z_1|}^{|z_2|} \frac{z^3}{z^2 + \varepsilon} \mathrm{d}z \right| 
        + \left| \int_0^{|z_1|} \frac{z^3}{z^2 + \varepsilon} \mathrm{d}z - \frac{1}{2}|z_1|^2 \right| \\
        &\leq \frac{1}{2} \left| |z_2|^2 - |z_1|^2 \right| 
        + \left| \int_0^{|z_1|} \frac{z^3}{z^2 + \varepsilon} \mathrm{d}z - \frac{1}{2}|z_1|^2 \right| \\
        &\leq \frac{1}{2} \big(|z_2| + |z_1|\big) |z_2 - z_1| 
        + \left| \int_0^{|z_1|} \frac{z^3}{z^2 + \varepsilon} \mathrm{d}z - \frac{1}{2}|z_1|^2 \right|.
    \end{align*}
    Therefore, for all $t \in \mathbb{R}_+$,
    \begin{align*}
        \int \left| \int_0^{|u_\varepsilon(t)|} \frac{z^3}{z^2 + \varepsilon} \mathrm{d}z - \frac{1}{2} |u(t)|^2 \right|\mathrm{d}x 
        \lesssim_{\norm{u_0}_{L^2}} \| u_\varepsilon(t) - u(t) \|_{L^2} 
        + \int \left| \int_0^{|u(t)|} \frac{z^3}{z^2 + \varepsilon} \mathrm{d}z - \frac{1}{2} |u(t)|^2 \right|\mathrm{d}x.
    \end{align*}
    The first term tends to $0$ by the strong convergence $u_\varepsilon(t) \to u(t)$ in $L^2(\mathbb{R}^d)$. The second term tends to $0$ by the dominated convergence theorem.

    \item Taking $\alpha = \sigma$ in Lemma~\ref{gui}, and using the Cauchy--Schwarz inequality together with the Sobolev embedding $H^1(\mathbb{R}^d) \hookrightarrow L^{2\sigma+2}(\mathbb{R}^d)$, we obtain
    \begin{align*}
        \int |F_{2\varepsilon}(u_\varepsilon(t)) - F_2(u(t))|
        &\lesssim C(\alpha) (\|u_\varepsilon(t)\|_{2\sigma+2}^{\sigma+1} + \|u(t)\|_{2\sigma+2}^{\sigma+1}) \| u_\varepsilon(t) - u(t) \|_{L^2}+\varepsilon\norm{u_0}_2^2  \\
        &\lesssim C(N_T) \| u_\varepsilon(t) - u(t) \|_{L^2}+\varepsilon\norm{u_0}_2^2 .
    \end{align*}
\end{itemize}
We conclude by the strong convergence $u_\varepsilon(t) \to u(t)$ in $L^2(\mathbb{R}^d)$.
\end{proof}

\begin{proof}[Proof of Proposition \ref{u}]

$\bullet$ \textit{Case} $\lambda>0$: Let $T > 0$ and $t \in [0, T]$.  
From \eqref{E}, we deduce:
\begin{align*}
    \frac{1}{2} \| \nabla u_\varepsilon(t) \|_{L^2}^2 + \frac{\lambda}{2} \int |F_{1\varepsilon}(u_\varepsilon(t))| 
    &= E_\varepsilon(u_\varepsilon(t)) + \frac{\lambda}{2} \int F_{2\varepsilon}(u_\varepsilon(t)) - \lambda \int \eta_\varepsilon(|u_\varepsilon(t)|) +  \int G(|u_\varepsilon(t)|^2)\mathrm{d}x \\
    &= E_\varepsilon(u_0) + \frac{\lambda}{2} \int F_{2\varepsilon}(u_\varepsilon(t)) - \lambda \int \eta_\varepsilon(|u_\varepsilon(t)|) +  \int G(|u_\varepsilon(t)|^2)\mathrm{d}x.
\end{align*}
Again by Fatou's lemma, we deduce that,
$$
\frac{1}{2} \| \nabla u(t) \|_{L^2}^2 + \frac{\lambda}{2} \int |F_1(u(t))| \leq \liminf_{\varepsilon \to 0} \left( E_\varepsilon(u_0) + \frac{\lambda}{2} \int F_{2\varepsilon}(u_\varepsilon(t)) - \lambda \int \eta_\varepsilon(|u_\varepsilon(t)|) + \int G(|u_\varepsilon(t)|^2)\mathrm{d}x\right).
$$
Using Lemma \ref{xc} and Corollary \ref{rq2}, we get:
$$
\frac{1}{2} \| \nabla u(t) \|_{L^2}^2 + \frac{\lambda}{2} \int |F_1(u(t))| 
\leq E(u_0) + \frac{\lambda}{2} \int F_2(u(t)) - \frac{\lambda}{2} \int |u(t)|^2 + \int G(|u(t)|^2)\mathrm{d}x.
$$
Thus, 
$$
u(t) \in W_1(\mathbb{R}^d), \quad \text{and} \quad E(u(t)) \leq E(u_0), \quad \text{for all } t \in \mathbb{R}_+.
$$
By uniqueness, the opposite inequality also holds, and we get
$$
E(u(t)) = E(u_0), \quad \text{for all } t \in \mathbb{R}_+.
$$
Now, to show that $u \in L^\infty(\mathbb{R}_+, W_1(\mathbb{R}^d))$, we distinguish the cases according to the assumption \ref{B1} and \ref{B2}.

 We begin with the case where \ref{B2} holds. For all $t \in \mathbb{R}_+$,
\begin{align*}
    \frac{1}{2} \| \nabla u(t) \|_{L^2}^2 + \frac{\lambda}{2} \int |F_1(u(t))| 
    &\leq E(u_0) + \frac{\lambda}{2} \int F_2(u(t)) \\
    &\lesssim E(u_0) + \int |u(t)|^{2+\delta} \\
    &\lesssim E(u_0) + \| u_0 \|_{L^2}^{2+\delta - \frac{d\delta}{2}} \| \nabla u(t) \|_{L^2}^{\frac{d\delta}{2}}.
\end{align*}
The last inequality follows from the Gagliardo-Nirenberg inequality.
We choose $\delta > 0$ such that $\frac{d\delta}{2} < 2$, then by Lemma \ref{fait12}:
$$
\frac{1}{4} \| \nabla u(t) \|_{L^2}^2 + \frac{\lambda}{2} \int |F_1(u(t))| \lesssim E(u_0) + C(\norm{u_0}_{L^2}).
$$
 We then address the complementary case where \ref{B1} holds. For all $t \in \mathbb{R}_+$,
\begin{align*}
    \frac{1}{2} \| \nabla u(t) \|_{L^2}^2 + \frac{\lambda}{2} \int |F_1(u(t))| 
    &\leq E(u_0) + \frac{\lambda}{2} \int F_2(u(t)) + \int G(|u(t)|^2)\mathrm{d}x\\
    &\lesssim E(u_0) + \int |u(t)|^{2+\delta} +\int G(|u(t)|^2)\mathrm{d}x \\
    &\lesssim E(u_0) + \| u_0 \|_{L^2}^{2+\delta - \frac{d\delta}{2}} \| \nabla u(t) \|_{L^2}^{\frac{d\delta}{2}} +  \| u_0 \|_{L^2}^{2\sigma+2 - d\sigma} \| \nabla u(t) \|_{L^2}^{d\sigma}.
\end{align*}
Again, the last inequality follows from the Gagliardo-Nirenberg inequality. We choose $\delta > 0$ such that $\frac{d\delta}{2} < 2$, and since $d\sigma < 2$, then by Lemma \ref{fait12}:
$$
\frac{1}{4} \| \nabla u(t) \|_{L^2}^2 + \frac{\lambda}{2} \int |F_1(u(t))| \lesssim E(u_0) + C(\norm{u_0}_{L^2}).
$$
Finally, in both cases, we conclude that 
$$
u \in L^\infty(\mathbb{R}_+, W_1(\mathbb{R}^d)).
$$

$\bullet$ \text{Case} $\lambda<0$: First we prove that $u\in L_{\rm loc}^{\infty}(\mathbb{R}_+,W_1(\mathbb{R}^d))$, from \eqref{E}, we obtain
\begin{align*}
    \left|\frac{\lambda}{2}\right| \int |F_{1\varepsilon}(u_\varepsilon(t))| &= \frac{\lambda}{2} \int F_{1\varepsilon}(u_\varepsilon(t)) \\
    &= -E_\varepsilon(u_\varepsilon(t)) + \frac{1}{2} \| \nabla u_\varepsilon(t) \|_{L^2}^2 - \frac{\lambda}{2} \int F_{2\varepsilon}(u_\varepsilon(t)) + \lambda \int \eta_\varepsilon(|u_\varepsilon(t)|) -\int G(|u_\varepsilon(t)|^2) \\
    &\leq -E_\varepsilon(u_0) + C(N_T).
\end{align*}
Then, by Fatou's lemma and Lemma \ref{rtt}, we deduce that
$$
\frac{|\lambda|}{2} \int |F_1(u(t))| \leq -E(u_0) + C(N_T),
$$
and therefore the mapping $t \mapsto \int |u(t)|^2 |\log|u(t)|^2|$ belongs to $L_{\text{loc}}^{\infty}(\mathbb{R}_+)$, in particular, 
$$
u \in L_{\text{loc}}^{\infty}(\mathbb{R}_+, W_1(\mathbb{R}^d)).
$$
Now we claim that $u\in\mathcal{C}(\mathbb{R}_+,W_1(\mathbb{R}^d))$, Thanks to Lemma \ref{ii}, we have
$
t \mapsto \int F_2(u(t)) \in \mathcal{C}(\mathbb{R}_+).
$ Hence by Lemma \ref{continue} we need to check that $t\mapsto \int F_1(u(t))\in \mathcal{C}(\mathbb{R}_+)$. To this end, by uniqueness it suffices to establish the continuity of the function at $0$.
\\
By Lemma \ref{jamila} and Lemma \ref{1}, we have $$\abs{\int \abs{u_\varepsilon(t)}^2\log(\abs{u_\varepsilon(t)}^2+\varepsilon)\mathrm{d}x-\int \abs{u_0}^2\log(\abs{u_0}^2+\varepsilon)\mathrm{d}x}\leq C\int_0^t\norm{\nabla u_\varepsilon(s)}^2_2\mathrm{d}s\leq C(e^{C t}-1).$$
Hence, $$\int \abs{F_{1\varepsilon}(u_\varepsilon(t))}\leq C(e^{C t}-1)-\int \abs{u_0}^2\log(\abs{u_0}^2+\varepsilon)+\int F_{2\varepsilon}(u_\varepsilon(t)).$$
Applying Fatou’s lemma, together with Lemma \ref{xc}, we obtain $$\int \abs{F_1(u(t))}\leq\liminf_{\varepsilon\to 0}\int \abs{F_{1\varepsilon}(u_\varepsilon(t))}\leq C(e^{C t}-1)-\int \abs{u_0}^2\log\abs{u_0}^2+\int F_{2}(u(t)).$$
It follows that $$\limsup_{t\to 0}\int \abs{F_1(u(t))}\mathrm{d}x\leq \int |F_1(u_0)|\mathrm{d}x.$$
On the other hand, by Fatou’s lemma, the map $t\mapsto \int \abs{F_1(u(t))}\mathrm{d}x$ is lower semicontinuous. Therefore, it is continuous at $t=0$.
\end{proof}
\begin{proposition}\label{op}
   The operator $L: u \mapsto \Delta u + \lambda u \log|u|^2 +  u g(|u|^{2})$ is continuous and bounded from $W_1(\mathbb{R}^d)$ to $W_1'(\mathbb{R}^d)$.

 Moreover, the function u defined in Lemma \ref{ghj} satisfies
$
u \in \mathcal{C}(\mathbb{R}_+, W_1(\mathbb{R}^d))
\cap \mathcal{C}^1(\mathbb{R}_+, W_1'(\mathbb{R}^d)),
$
and 
\begin{equation}\label{equation2}
i u_t + \Delta u + \lambda u \log|u|^2 + u g(|u|^{2})
= 0 \quad \text{in } W_1'(\mathbb{R}^d), \quad \text{for all } t \in \mathbb{R}_+.
\end{equation}
\end{proposition}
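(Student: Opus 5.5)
The operator splits as $L=L_1+L_2+L_3$ with $L_1u=\Delta u$, $L_2u=\lambda u\log|u|^2$ and $L_3u=ug(|u|^2)$. We treat each piece separately as a map $W_1(\mathbb{R}^d)\to W_1'(\mathbb{R}^d)$.

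\textbf{The linear part.} $L_1$ is bounded and continuous from $H^1$ to $H^{-1}$. Since $W_1\hookrightarrow H^1$ continuously, we have $H^{-1}\hookrightarrow W_1'$, and $L_1$ is handled.

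\textbf{The power part.} By \ref{A2} and Corollary \ref{power10}, $|ug(|u|^2)|\lesssim |u|+|u|^{2\sigma+1}$. The differences satisfy the Lipschitz-type bound already used in Lemma \ref{1.2}. Together with the Sobolev embedding $H^1\hookrightarrow L^{2\sigma+2}$ and Hölder's inequality, this shows that $L_3$ maps $H^1$ into $L^2+L^{\frac{2\sigma+2}{2\sigma+1}}\subset H^{-1}$, boundedly and continuously. Hence $L_3$ is bounded and continuous into $W_1'$.

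\textbf{The logarithmic part.} Split $u\log|u|^2=g_1(u)+g_2(u)$ using the cut-off $\zeta$.
\begin{itemize}
\item The large-amplitude piece $g_2$ satisfies $|g_2(z)|\lesssim |z|^{1+2\delta}$, with the Lipschitz bound of Lemma \ref{ii}. It is therefore bounded and continuous from $H^1$ into $L^{r'}\subset H^{-1}$ for suitable $\delta$.
\item The small-amplitude piece $g_1$ is the genuine obstacle. It is not controlled in $L^2$ in general; it has to be handled by Orlicz duality. Recall from Appendix \ref{C} that $W_1=\{u\in H^1:\ |u|^2\log|u|^2\in L^1\}$, normed using the Luxemburg norm of the Orlicz space $L^A$, with $A(s)\sim -s^2\log s^2$ near $0$, and that $W_1'=H^{-1}+L^{A'}$, where $A'$ is the complementary Young function.
\end{itemize}
The key step is to show that $u\mapsto g_1(u)$ maps $L^A$ boundedly and continuously into $L^{A'}$. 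We do this as in Cazenave's treatment of logNLS \cite{1}, Section 9.3. First, the pointwise inequality $A'(|g_1(z)|)\lesssim A(|z|)$ gives boundedness. It follows from the fact that $A'(s\log(1/s))\lesssim s^2\log(1/s)$ for small $s$, which is the standard $\Delta_2$ conjugacy computation. Continuity then follows from Vitali/dominated convergence in modular form. Take $u_n\to u$ in $L^A$ and pass to a subsequence converging a.e. with a dominating modular. Since $A'$ satisfies $\Delta_2$, convergence in modular implies norm convergence in $L^{A'}$. The usual uniqueness-of-limit argument then upgrades the subsequence statement to the whole sequence. Summing the three pieces gives that $L$ is continuous and bounded $W_1\to W_1'$.

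\textbf{The equation and time regularity.} From Lemma \ref{ghj}, $u$ solves $iu_t=-Lu$ in $\mathcal{D}'((0,\infty)\times\mathbb{R}^d)$. By Proposition \ref{u}, $u\in\mathcal{C}(\mathbb{R}_+,W_1)$, so the first part gives $Lu\in\mathcal{C}(\mathbb{R}_+,W_1')$. Integrating the equation in time against test functions $\psi\in\mathcal{C}_c^\infty$ yields
\[
u(t)=u_0+i\int_0^t Lu(s)\,\mathrm{d}s
\]
in $\mathcal{D}'(\mathbb{R}^d)$. Both sides lie in $W_1'$, and $\mathcal{C}_c^\infty$ is dense in $W_1$ (Appendix \ref{C}), so the identity holds in $W_1'$. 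The integrand is continuous with values in $W_1'$, so $u\in\mathcal{C}^1(\mathbb{R}_+,W_1')$ and \eqref{equation2} holds for every $t$.

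The main difficulty is the Orlicz continuity of $g_1$: the modular (not the norm) is what is controlled, which is why the $\Delta_2$ property of $A'$ must be invoked.
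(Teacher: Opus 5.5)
Your proposal is correct and follows the paper's proof. You use the same splitting of $L$ and the same Lipschitz estimate (Corollary \ref{power10} plus Sobolev embedding) showing that $u\mapsto ug(|u|^2)$ is continuous and bounded $H^1\to H^{-1}\hookrightarrow W_1'$, and you deduce $u\in\mathcal{C}^1(\mathbb{R}_+,W_1')$ from $iu_t=-Lu$ and $u\in\mathcal{C}(\mathbb{R}_+,W_1)$ (Proposition \ref{u}) exactly as the paper does. The differences are that the paper simply cites Lemma \ref{oper} (Cazenave's Lemma 2.6) for $u\mapsto u\log|u|^2$ where you sketch its Orlicz-duality proof, and that you spell out the time-integration and density step the paper leaves implicit; the density of $\mathcal{C}_c^\infty$ in $W_1$ you invoke is standard but is not actually stated in Appendix \ref{C}.
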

Before proving the proposition, we recall the key lemma. 
\begin{lemma}\cite[Lemma 2.6]{7}\label{oper}
    The operator 
$$
L: u \in W_1(\mathbb{R}^d) \longmapsto u \log|u|^2 \in W_1'(\mathbb{R}^d),
$$
is continuous and bounded.
\end{lemma}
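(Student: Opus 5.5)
The plan is to split the nonlinearity into a part near $0$, which is controlled by the Orlicz part of $W_1(\mathbb{R}^d)$, and a part near infinity, which is controlled by $H^1$. I use the functional setting of Appendix \ref{C}: $W_1(\mathbb{R}^d)=H^1(\mathbb{R}^d)\cap L^A(\mathbb{R}^d)$, where $A$ is the convex Young function $A(s)=-s^2\log s^2$ for $s$ small, extended to a convex $\mathcal{C}^1$ function growing like $s^2$ away from $0$. Then $W_1'(\mathbb{R}^d)=H^{-1}(\mathbb{R}^d)+L^{A^*}(\mathbb{R}^d)$, with $A^*$ the conjugate Young function. As in the uniqueness argument, I write $u\log|u|^2=g_1(u)+g_2(u)$ with $g_1(z)=\zeta(z)z\log|z|^2$ supported in $\{|z|\le 1\}$ and $g_2(z)=(1-\zeta(z))z\log|z|^2$ supported in $\{|z|\ge 1/2\}$. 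I will show that $g_2$ maps $H^1$ continuously and boundedly into $H^{-1}$, and that $g_1$ maps $L^A$ continuously and boundedly into $L^{A^*}$; the claim then follows from the continuous embeddings $W_1\hookrightarrow H^1$, $W_1\hookrightarrow L^A$ and the definition of the sum norm on $W_1'$.

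\emph{Large part.} For $\delta>0$ small with $2+\delta<2^*$ we have $|g_2(z)|\lesssim_\delta |z|^{1+\delta}$ and, as in Lemma \ref{gui}, $|g_2(z_1)-g_2(z_2)|\lesssim_\delta (|z_1|^\delta+|z_2|^\delta)|z_1-z_2|$. By H\"older's inequality,
\begin{equation*}
\|g_2(u)-g_2(v)\|_{L^{(2+\delta)'}}\lesssim \big(\|u\|_{L^{2+\delta}}^\delta+\|v\|_{L^{2+\delta}}^\delta\big)\|u-v\|_{L^{2+\delta}}.
\end{equation*}
The Sobolev embeddings $H^1\hookrightarrow L^{2+\delta}$ and $L^{(2+\delta)'}\hookrightarrow H^{-1}$ then give both boundedness and (local Lipschitz) continuity from $H^1$ into $H^{-1}$.

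\emph{Small part.} Here I use the structure of $A$. For $s=|z|\le 1$ one has $|g_1(z)|\lesssim s|\log s^2|\lesssim A(s)/s + s$. By the Young equality $A^*(A'(s))=sA'(s)-A(s)\le sA'(s)$, and since $A'(s)\approx s|\log s^2|$ near $0$, this should yield the pointwise bound
\begin{equation*}
A^*(|g_1(u)|)\lesssim A(|u|)+|u|^2 .
\end{equation*}
Integrating gives a modular bound, and homogeneity of Luxemburg norms (using the $\Delta_2$ condition satisfied by $A$ and $A^*$) converts it into $\|g_1(u)\|_{L^{A^*}}\le C(\|u\|_{L^A}+\|u\|_{L^2})$. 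The $L^2$ term is harmless since $W_1\subset L^2$.

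\emph{Continuity, the main obstacle.} Because $z\log|z|^2$ is not Lipschitz at $0$, a difference estimate of the type used for $g_2$ is not available for $g_1$. I would argue by subsequences instead. Let $u_n\to u$ in $W_1$. Then, along a subsequence, $u_n\to u$ a.e., and $A(|u_n|)$, $|u_n|^2$ are dominated by a fixed $L^1$ function; this is the Orlicz analogue of the partial converse of dominated convergence, available because of $\Delta_2$. The pointwise bound above then dominates $A^*(|g_1(u_n)-g_1(u)|)$ by an integrable function, so dominated convergence gives modular convergence to $0$. Since $A^*$ satisfies $\Delta_2$, modular convergence is equivalent to convergence in the Luxemburg norm of $L^{A^*}$. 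Uniqueness of the limit upgrades the subsequential statement to convergence of the whole sequence. The delicate points are verifying $\Delta_2$ for $A^*$ and that subsequence domination holds in $L^A$; I would check both directly from the explicit form of $A$ near $0$.
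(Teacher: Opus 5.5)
The paper does not prove this lemma; it imports it from \cite{7}. Your argument is the classical one for this fact: a small-amplitude part controlled by the Orlicz modular, a large-amplitude part controlled through $H^1\hookrightarrow L^{2+\delta}$, then a.e.\ convergent dominated subsequences and the $\Delta_2$ condition. It goes through, but one intermediate claim is false and must be replaced.

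The bound $\|g_1(u)\|_{L^{A^*}}\le C(\|u\|_{L^A}+\|u\|_{L^2})$ cannot come from homogeneity of the Luxemburg norm, because $g_1$ is not homogeneous. In fact it fails. Take $\phi\in\mathcal{C}_c^\infty(\mathbb{R}^d)$ with $\phi\not\equiv 0$ and $|\phi|\le 1$, and set $u=\epsilon\phi$ with $0<\epsilon\le\frac12$. Then $\zeta(u)=1$ and $g_1(u)=2\epsilon\log\epsilon\,\phi+\epsilon\,\phi\log|\phi|^2$, so $\|g_1(u)\|_{L^{A^*}}\gtrsim\epsilon|\log\epsilon|$ as $\epsilon\to 0$, whereas your right-hand side is $O(\epsilon)$.

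What the modular estimate actually gives is the following. If $\int A^*(|g_1(u)|)\le M$, then by convexity $\|g_1(u)\|_{L^{A^*}}\le\max\{1,M\}$. By $\Delta_2$ for $A$, the modular $\int A(|u|)$ is bounded on bounded subsets of $L^A$. This is exactly the boundedness the lemma asserts (bounded sets go to bounded sets), so the rest of your plan is unaffected.

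The points you left open do hold:
\begin{itemize}
\item \emph{Pointwise bound.} For $s=|z|\le e^{-3}$ one has $|g_1(z)|\le s|\log s^2|\le A'(s)=2s|\log s^2|-2s$. For $e^{-3}\le s\le 1$ one has $|g_1(z)|\le 6s\le A'(s)=6s+4e^{-3}$. Since $A^*$ is nondecreasing, Young's equality gives $A^*(|g_1(z)|)\le sA'(s)-A(s)\le 2A(|z|)$ for every $z$, so the extra $|u|^2$ term is not needed.
\item \emph{$\Delta_2$ for $A^*$.} The ratio $sA'(s)/A(s)$ equals $2-2/|\log s^2|\ge\frac53$ on $(0,e^{-3}]$ and stays above $1.6$ on the quadratic branch. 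Hence $sA'(s)\ge\frac32 A(s)$ for all $s>0$, which is the $\nabla_2$ condition for $A$. It gives $t(A^*)'(t)\le 3A^*(t)$, hence $A^*(2t)\le 8A^*(t)$.
\item \emph{Subsequence domination in $L^A$.} Choose $n_k$ with $\sum_k\int A(|u_{n_k}-u|)<\infty$, and use $A(a+b)\le\frac12\bigl(A(2a)+A(2b)\bigr)$ together with $\Delta_2$ for $A$.
\end{itemize}

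You also do not need the full identification $W_1'=H^{-1}+L^{A^*}$. The continuous embeddings $H^{-1}\hookrightarrow W_1'$ and $L^{A^*}\hookrightarrow W_1'$ suffice, the latter by the Orlicz H\"older inequality.
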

\begin{proof}[Proof of Proposition \ref{op}]
    By Lemma $\ref{oper}$, it suffices to prove the statement of the proposition for the mapping $u \mapsto u g(|u|^{2})$. Let $u, v \in H^1(\mathbb{R}^d)$. Using Corollary \ref{power10} and Sobolev embeddings, we have
\begin{align*}
    \| u g(|u|^{2}) - v g(|v|^{2}) \|_{H^{-1}} 
&\lesssim \| u |u|^{2\sigma+2} - v |v|^{2\sigma+2} \|_{L^{\frac{2\sigma+2}{2\sigma+1}}}+\norm{u-v}_{L^2}\\
&\lesssim  \left( \| u \|_{L^{2\sigma+2}}^{2\sigma} + \| v \|_{L^{2\sigma+2}}^{2\sigma} \right) \| u - v \|_{L^{2\sigma+2}}+\norm{u-v}_{L^2}
\\&\lesssim \left( 1+\| u \|_{H^1(\mathbb{R}^d)}^{2\sigma} + \| v \|_{H^1(\mathbb{R}^d)}^{2\sigma} \right) \| u - v \|_{H^1(\mathbb{R}^d)},
\end{align*}
which shows that $u \mapsto u g(|u|^{2})$ is continuous from $H^1(\mathbb{R}^d)$ to $H^{-1}(\mathbb{R}^d)$.

Now from equation \eqref{equation}, we have
$$
i u_t = -L u,
$$
We deduce that $u \in \mathcal{C}^1(\mathbb{R}_+, W_1'(\mathbb{R}^d))$. In particular, we have
\begin{equation*}
    i u_t + \Delta u + \lambda u \log|u|^2 + u g(|u|^{2}) = 0 \quad \text{in } W_1'(\mathbb{R}^d), \quad \text{for all } t \in \mathbb{R}_+.\qedhere
\end{equation*}
\end{proof}
\section{Construction of the solution in \texorpdfstring{$\Sigma_\alpha$}{sigmaalpha}}\label{3em}
Throughout this section, for \eqref{NLS}, we assume that $\lambda \in \mathbb{R}^*$, that \ref{A1}--\ref{A2} are satisfied, and that either condition \ref{B2} holds with $0 < \sigma < \frac{2}{d-2}$, or condition \ref{B1} holds with $0 < \sigma < \frac{2}{d}$.
We also consider the mass-critical case $\sigma = \frac{2}{d}$ with $\|u_0\|_2 = \|Q\|_2$ and $\mu > 0$ for \eqref{NLSlp}.
\subsection{Energy conservation and continuity in \texorpdfstring{$\Sigma_\alpha$}{Sigmaalpha}}
In~\cite{13}, the authors study the construction of solutions $u \in L_{\rm loc}^{\infty}(\mathbb{R},\Sigma_\alpha)$ to \eqref{NLSlp} in a non-detailed manner. Moreover, they construct solutions in the weak sense under the condition $\mu < 0$, but their construction is equally valid for $\mu > 0$.
In this section, we provide a detailed construction of solutions $u\in \mathcal{C}(\mathbb{R},\Sigma_\alpha)$ and we prove the conservation of energy without any sign condition on $\lambda$.
Assume that $u_0 \in \Sigma_\alpha$. Let $u$ and $u_\varepsilon$ be the solution to \eqref{NLS} and $\eqref{ppe}$ respectively as defined previously. We know that $u_\varepsilon$ is uniformly bounded in $H^1(\mathbb{R}^d)$.

To show that $u$ is in $\Sigma_\alpha$, it suffices to show that the quantity
$
\|\langle x \rangle^\alpha u_\varepsilon(t)\|_{L^2}
$
is uniformly bounded with respect to $\varepsilon$.
\begin{remark}
   Note that one can also prove directly that $u \in \Sigma_\alpha$ by using the construction from the previous section and the technique developed in \cite[Section 6.5]{1}. 
However, the main objective here is to show that the energy is conserved for any sign of $\lambda$. For this purpose, we proceed via the regularized equation.
\end{remark}
We define $I_\alpha(v) := \|\langle x \rangle^\alpha v\|_{L^2}^2$, and we have the following estimate similarly as in \cite{13}:

\begin{lemma}\label{momo}
   Let $u_\varepsilon$ be the solution of the equation \eqref{ppe}, defined in Theorem \ref{ppp}. Then 
$t \mapsto I_\alpha(u_\varepsilon(t)) \in \mathcal{C}^1([0,T])$.  
Moreover, there exists a constant $C(T, \norm{u_0}_{\Sigma_\alpha}) > 0$ such that
\begin{equation}\label{C2}
    \|\langle x \rangle^\alpha u_\varepsilon(t)\|_{L^2} \leq C(T, \norm{u_0}_{\Sigma_\alpha}).
\end{equation}
\end{lemma}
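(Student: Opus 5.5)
We compute the time derivative of $I_\alpha(u_\varepsilon(t))$ using the regularized equation, bound it by $C\,\norm{\nabla u_\varepsilon}_{L^2}\,I_\alpha(u_\varepsilon)^{1/2}$, and then use the uniform $H^1$ bound $N_T$ from \eqref{nt} together with a Grönwall argument. Here $\Sigma_\alpha$ is the weighted space with $0<\alpha\le 1$, as in \cite{13}.

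\emph{Differentiation identity.} Formally, multiplying \eqref{ppe} by $\langle x\rangle^{2\alpha}\overline{u_\varepsilon}$, integrating, and taking imaginary parts, we get
\begin{equation*}
\frac{\mathrm{d}}{\mathrm{d}t} I_\alpha(u_\varepsilon(t)) = -2\,\Im\int \nabla\bigl(\langle x\rangle^{2\alpha}\bigr)\cdot \nabla u_\varepsilon\,\overline{u_\varepsilon}\,\mathrm{d}x
= -4\alpha\,\Im\int \langle x\rangle^{2\alpha-2} x\cdot\nabla u_\varepsilon\,\overline{u_\varepsilon}\,\mathrm{d}x .
\end{equation*}
The key point is that both the logarithmic term $\lambda u_\varepsilon\log(|u_\varepsilon|^2+\varepsilon)$ and the term $u_\varepsilon g(|u_\varepsilon|^2)$ carry a real coefficient times $u_\varepsilon$, so they contribute nothing to the imaginary part.

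To justify this rigorously (the weight is unbounded and $u_\varepsilon$ is only in $\mathcal{C}([0,T],H^1)\cap \mathcal{C}^1([0,T],H^{-1})$), we replace $\langle x\rangle^{2\alpha}$ by the truncated weight $\langle x\rangle^{2\alpha}e^{-\delta|x|^2}$, or by $\langle x\rangle^{2\alpha}\chi_R$. The truncated weight is bounded with bounded gradient, so the $H^{-1}$–$H^1$ duality pairing makes sense. We obtain the truncated identity in integrated form and then let $\delta\to0$ (or $R\to\infty$) once the uniform bound is available. This is the standard argument of \cite[Section 6.5]{1}.

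\emph{Estimate.} Since $\alpha\le 1$, we have $|\langle x\rangle^{2\alpha-2}x|\le \langle x\rangle^{2\alpha-1}\le\langle x\rangle^{\alpha}$. Cauchy--Schwarz then gives
\begin{equation*}
\Bigl|\frac{\mathrm{d}}{\mathrm{d}t} I_\alpha(u_\varepsilon(t))\Bigr|\le 4\alpha\,\norm{\nabla u_\varepsilon(t)}_{L^2}\,\norm{\langle x\rangle^{\alpha}u_\varepsilon(t)}_{L^2}\le 4\alpha N_T\, I_\alpha(u_\varepsilon(t))^{1/2},
\end{equation*}
and the same bound holds uniformly in the truncation parameter. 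Writing $y=I_\alpha^{1/2}$ (or integrating directly), this yields
\begin{equation*}
\norm{\langle x\rangle^\alpha u_\varepsilon(t)}_{L^2}\le \norm{\langle x\rangle^\alpha u_0}_{L^2}+2\alpha N_T\, t .
\end{equation*}
We pass to the limit in the truncation by monotone convergence, which gives \eqref{C2} with $C$ depending on $T$ and $\norm{u_0}_{\Sigma_\alpha}$ (through $N_T$, which depends on $\norm{u_0}_{H^1}$).

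\emph{$\mathcal{C}^1$ regularity.} Once $I_\alpha(u_\varepsilon)$ is bounded, the right-hand side $-4\alpha\Im\int\langle x\rangle^{2\alpha-2}x\cdot\nabla u_\varepsilon\overline{u_\varepsilon}$ is continuous in $t$. This uses the continuity of $u_\varepsilon$ in $H^1$ and the fact that $\langle x\rangle^{2\alpha-1}|u_\varepsilon|\in L^2$ uniformly. Hence $t\mapsto I_\alpha(u_\varepsilon(t))$ is $\mathcal{C}^1$.

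\emph{Main obstacle.} The only delicate step is the justification of the formal identity, namely the truncation together with the continuity of $\langle x\rangle^\alpha u_\varepsilon$ in $L^2$. Uniform integrability of the tails comes from the integrated truncated identity. If that route proves awkward, I would first show that $\Sigma_\alpha$ regularity propagates locally via a fixed point in the weighted space (as in Cazenave's Kato framework), and then apply the a priori estimate above.
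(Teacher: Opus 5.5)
Your proposal is correct and follows the paper's proof. The paper likewise differentiates $I_\alpha(u_\varepsilon)$, uses that the real-coefficient nonlinear terms drop out, and bounds the derivative by $4\alpha N_T I_\alpha^{1/2}$ via $\langle x\rangle^{2\alpha-1}\le\langle x\rangle^{\alpha}$ for $\alpha\le 1$. It then integrates in time and defers the rigorous justification to the truncation argument of \cite[Section 6.5]{1}.

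Two harmless slips do not matter since you only use absolute values. The identity should read $\frac{\mathrm{d}}{\mathrm{d}t}I_\alpha = +4\alpha\,\Im\int\langle x\rangle^{2\alpha-2}x\cdot\nabla u_\varepsilon\,\overline{u_\varepsilon}$, so your sign is off. Also, with a Gaussian or cut-off truncation the constant changes by a $\delta$-uniform (resp. $R$-uniform) amount rather than staying exactly equal to $4\alpha$.
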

\begin{proof}
For the regularity of the function $t \mapsto I_\alpha(u_\varepsilon(t))$, we follow the same technique developed in \cite[Section 6.5]{1}. 
Similarly as in \cite{13}, we have
\begin{align*}
\frac{1}{2} \frac{\mathrm{d}}{\mathrm{d}t} I_\alpha(u_\varepsilon(t))
= 2\alpha  \Im\left( \langle x \rangle^{2\alpha - 1} \nabla u_\varepsilon(t), u_\varepsilon(t) \right).
\end{align*}
On the other hand, since $\alpha \leq 1$, we have $\langle x \rangle^{2\alpha - 1} \leq \langle x \rangle^{\alpha}$. Moreover, from (\ref{nt}), we obtain
$$
\frac{1}{2} \frac{\mathrm{d}}{\mathrm{d}t} I_\alpha(u_\varepsilon(t)) \leq 2 \alpha \| \langle x \rangle^\alpha u_\varepsilon(t) \|_{L^2} N_T
= 2 \alpha \sqrt{I_\alpha(u_\varepsilon(t))}  N_T.
$$
Integrating this inequality in time from $0$ to $t$, we deduce
\begin{equation*}
    \|\langle x\rangle^{\alpha} u_\varepsilon(t)\|_{L^2} \leq 2\alpha \widetilde{N}_T + \|\langle x\rangle^{\alpha} u_0\|_{L^2} 
    =: C(T, \norm{u_0}_{\Sigma_\alpha}).\qedhere
\end{equation*}
\end{proof}
\begin{corollary}  Let $u$ be the solution of equation \eqref{NLS} given by Proposition \ref{op}. Then
$$
t \longmapsto \int_{\mathbb{R}^d} |\langle x \rangle^{\alpha} u(t,x)|^2  \mathrm{d}x \in L^{\infty}_{\mathrm{loc}}(\mathbb{R}_+).
$$
\end{corollary}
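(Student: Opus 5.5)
The plan is to pass the uniform weighted bound \eqref{C2} of Lemma \ref{momo} to the limit $\varepsilon\to0$ by weak lower semicontinuity. Fix $T>0$ and $t\in[0,T]$.

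By Lemma \ref{23}, up to the subsequence already extracted, $u_\varepsilon(t)\to u(t)$ strongly in $L^2(\mathbb{R}^d)$. Hence, for each $R>0$, $\langle x\rangle^\alpha u_\varepsilon(t)\to \langle x\rangle^\alpha u(t)$ in $L^2(B_R)$, since $\langle x\rangle^\alpha$ is bounded on $B_R$. Using \eqref{C2}, this gives
$$\int_{B_R}\langle x\rangle^{2\alpha}|u(t,x)|^2\,\mathrm{d}x=\lim_{\varepsilon\to0}\int_{B_R}\langle x\rangle^{2\alpha}|u_\varepsilon(t,x)|^2\,\mathrm{d}x\le C(T,\norm{u_0}_{\Sigma_\alpha})^2.$$
Letting $R\to\infty$ and applying monotone convergence yields $I_\alpha(u(t))\le C(T,\norm{u_0}_{\Sigma_\alpha})^2$ for every $t\in[0,T]$. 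Since $T$ is arbitrary, $t\mapsto I_\alpha(u(t))$ lies in $L^\infty_{\rm loc}(\mathbb{R}_+)$.

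An alternative route uses weak compactness. The family $\langle x\rangle^\alpha u_\varepsilon(t)$ is bounded in $L^2$, so it has a weak limit along a further subsequence. That limit must equal $\langle x\rangle^\alpha u(t)$, as can be seen by testing against $\mathcal{C}_c^\infty$ functions and using the $L^2_{\rm loc}$ convergence from Lemma \ref{limu}. The weak lower semicontinuity of the norm then gives the same bound.

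There is essentially no obstacle. The only point to check is that the subsequence along which $u_\varepsilon(t)\to u(t)$ holds is independent of $t$, or that the argument can be run pointwise in $t$. This is fine because Lemma \ref{limu} gives convergence in $\mathcal{C}_{\rm loc}(\mathbb{R}_+,L^2_{\rm loc})$ along a single subsequence, and only local convergence is needed above. The other requirement is that the constant in \eqref{C2} is uniform in $\varepsilon$, which Lemma \ref{momo} provides.
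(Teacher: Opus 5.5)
Your proposal is correct and essentially matches the paper. Your alternative route (boundedness of $\langle x\rangle^\alpha u_\varepsilon(t)$ in $L^2$, identification of the weak limit as $\langle x\rangle^\alpha u(t)$, then weak lower semicontinuity of the norm together with \eqref{C2}) is exactly the paper's argument; your main route, via strong $L^2(B_R)$ convergence followed by monotone convergence in $R$, is a hands-on version of the same lower-semicontinuity step that avoids having to identify the weak limit.
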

\begin{proof}
   Indeed, since the sequence $(\langle x \rangle^{\alpha} u_\varepsilon(t))_\varepsilon$ is uniformly bounded in $L^2(\mathbb{R}^d)$ for all $t \in [0,T]$, we deduce, by the same arguments as in the previous section, that
$$
\langle x \rangle^{\alpha} u_\varepsilon(t) \rightharpoonup \langle x \rangle^{\alpha} u(t) \quad \text{weakly in } L^2(\mathbb{R}^d), \quad \text{for all } t \in [0,T].
$$
In particular, from \eqref{C2}, for all $t \in [0, T]$, we have
$$
\|\langle x \rangle^{\alpha} u(t)\|_{L^2} \leq \liminf_{\varepsilon \to 0} \|\langle x \rangle^{\alpha} u_\varepsilon(t)\|_{L^2} \leq C(T, \norm{u_0}_{\Sigma_\alpha}).
$$
This shows that
$
t \longmapsto \|\langle x \rangle^{\alpha} u(t)\|_{L^2} \in L^{\infty}_{\mathrm{loc}}(\mathbb{R}_+).
$
\end{proof}
In the previous section, we do not know if we have conservation of the energy in the case $\lambda<0$.
By contrast, in the present setting we show that the energy is conserved even when $ \lambda < 0 $.
This improvement is due to the gain of regularity on $ u $.
The main difficulty encountered in the $W_1(\mathbb{R}^d)$ framework when $\lambda<0$ comes from the opposite signs of the first two terms in the regularized energy $E_\varepsilon(u_\varepsilon)$,
$$
\frac{1}{2}\int |\nabla u_\varepsilon|^2 
- \frac{|\lambda|}{2}\int |F_{1,\varepsilon}(u_\varepsilon)|,
$$
together with the fact that the convergence of these two terms is not known. 
However, with the uniform bound on $u_\varepsilon(t)$ in $ \Sigma_\alpha $, we can establish the following:
\begin{lemma}\label{xfg}
We have
    $$
\int F_{1\varepsilon}(u_\varepsilon(t))\mathrm{d}x \longrightarrow \int F_1(u(t))\mathrm{d}x \quad \text{as } \varepsilon \to 0, \text{ for all } t \in \mathbb{R}_+.
$$
\end{lemma}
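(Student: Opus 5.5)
We will prove the convergence by splitting $\mathbb{R}^d$ into a large ball $B_R$ and its complement. The weight $\langle x\rangle^\alpha$ will control the tails uniformly in $\varepsilon$. On the ball we will use the strong convergence $u_\varepsilon(t)\to u(t)$ in $L^2(\mathbb{R}^d)$ from Lemma \ref{23}.

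\textbf{Step 1: uniform tail bound.} Recall that $F_{1\varepsilon}(u_\varepsilon)=\rho(u_\varepsilon)|u_\varepsilon|^2\log(|u_\varepsilon|^2+\varepsilon)$ is supported where $|u_\varepsilon|\le \frac12$. There
\[
|\log(|u_\varepsilon|^2+\varepsilon)|\le |\log |u_\varepsilon|^2|,
\]
since $|u_\varepsilon|^2+\varepsilon<1$ for $\varepsilon<1/2$ (the log is negative and increasing). Hence for any small $\delta>0$ we have
\[
|F_{1\varepsilon}(u_\varepsilon)|\lesssim_\delta |u_\varepsilon|^{2-\delta}.
\]
We then use the classical Hölder trick from \cite{13}, writing $|u|^{2-\delta}=\langle x\rangle^{-(2-\delta)\alpha}\big(\langle x\rangle^{\alpha}|u|\big)^{2-\delta}$. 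Hölder with exponents $\frac{2}{2-\delta}$ and $\frac{2}{\delta}$ then gives
\[
\int_{|x|>R}|u_\varepsilon|^{2-\delta}\le \|\langle x\rangle^\alpha u_\varepsilon\|_{L^2}^{2-\delta}\Big(\int_{|x|>R}\langle x\rangle^{-\frac{2(2-\delta)\alpha}{\delta}}\Big)^{\delta/2}.
\]
Choosing $\delta$ small enough that $\frac{2(2-\delta)\alpha}{\delta}>d$ makes the last integral finite and tend to $0$ as $R\to\infty$. By Lemma \ref{momo} and its corollary, this tail is bounded by $C(T,\|u_0\|_{\Sigma_\alpha})\,o_R(1)$, uniformly in $\varepsilon$. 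The same bound holds for $F_1(u(t))$.

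\textbf{Step 2: the ball.} On $B_R$ we first note that
\[
|F_{1\varepsilon}(z)-F_1(z)|=\rho(z)|z|^2\log\Big(1+\frac{\varepsilon}{|z|^2}\Big)\le \varepsilon.
\]
Hence $\int_{B_R}|F_{1\varepsilon}(u_\varepsilon)-F_1(u_\varepsilon)|\le \varepsilon|B_R|$. Next, $F_1$ is Hölder continuous, analogously to Lemma \ref{i}. Indeed, $z\mapsto \rho(z)|z|^2\log|z|^2$ has derivative bounded by $C|z|\,|\log|z||\lesssim 1$ on the support, so $F_1$ is in fact Lipschitz on $\mathbb{C}$. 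Therefore
\[
\int_{B_R}|F_1(u_\varepsilon)-F_1(u)|\lesssim |B_R|^{1/2}\|u_\varepsilon(t)-u(t)\|_{L^2}\to0.
\]

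\textbf{Step 3: conclusion.} Letting $\varepsilon\to0$ first and then $R\to\infty$ gives the claim. The main obstacle is Step 1. Without the weight, the small-amplitude region $\{|u|\ll1\}$ extending to spatial infinity carries a logarithmic loss, and the $L^2$ convergence alone cannot control it. This is exactly why the uniform $\Sigma_\alpha$ bound is needed, together with the choice $\delta<\frac{4\alpha}{d+2\alpha}$ relating $\delta$, $\alpha$ and $d$. Care is also needed that the constants stay uniform in $\varepsilon$ for fixed $t\in[0,T]$.
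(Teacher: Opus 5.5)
Your proposal is correct and follows essentially the same route as the paper: split at radius $R$, bound the tail by $|u|^{2-\delta}$ via a weighted Hölder estimate made uniform in $\varepsilon$ by Lemma \ref{momo} and its corollary, and use local convergence on the ball. The only minor difference is on the ball, where you argue directly from the pointwise bound $|F_{1\varepsilon}-F_1|\le\varepsilon$ and the Lipschitz continuity of $F_1$ instead of invoking Lemma \ref{1.2}; this is slightly more self-contained.
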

\begin{proof}
This limit is achieved by splitting the integral into the regions $ |x| < R $ and $ |x| > R $,
applying Hölder's inequality, and optimizing with respect to $ R $ (see \cite{8}), techniques also used in \cite{13}.

For $R>0$ we have
\begin{align*}
\int \big|F_{1\varepsilon}(u_\varepsilon(t)) - F_1(u(t))\big| 
&= \int_{|x|<R} \big|F_{1\varepsilon}(u_\varepsilon(t)) - F_1(u(t))\big| 
+ \int_{|x|\ge R} \big|F_{1\varepsilon}(u_\varepsilon(t)) - F_1(u(t))\big|.
\end{align*}
$\bullet$ \textit{Second integral:} 
By Lemma \ref{LN} we have
$$
\int_{|x|\ge R} \big|F_{1\varepsilon}(u_\varepsilon(t)) - F_1(u(t))\big| 
\lesssim \int_{|x|>R} |u_\varepsilon(t)|^{2-2\delta} + |u(t)|^{2-2\delta}
$$
Applying H\"older's inequality with $p = \frac{1}{1-\delta}$ and $p' = \frac{1}{\delta}$, we obtain
\begin{align*}
    \int_{|x|>R} |u|^{2-2\delta} 
    &\leq \left( \int_{|x|>R} |x|^{-\alpha(2-2\delta)\frac{1}{\delta}}  \mathrm{d}x \right)^{\delta} 
          \cdot \left( \int_{|x|>R} |x^\alpha u(x)|^2  \mathrm{d}x \right)^{1-\delta}.
\end{align*}
If
$
0 < \delta < \frac{2\alpha}{d + 2\alpha},
$
then
$$
\left( \int_{|x|>R} |x|^{-\alpha(2-2\delta)\frac{1}{\delta}}  \mathrm{d}x \right)^\delta \leq c_2 R^{-2\alpha(1-\delta) + d\delta}.
$$
Hence,
\begin{equation}\label{zzz}
    \int_{|x|>R} |u|^{2-2\delta} \lesssim R^{-2\alpha(1-\delta) + d\delta}  \| x^\alpha u \|_{L^2}^{2-2\delta}.
\end{equation}
Same for $u_\varepsilon$, then by Lemma \ref{momo}, we get $$
\int_{|x|\ge R} \big|F_{1\varepsilon}(u_\varepsilon(t)) - F_1(u(t))\big|  
\lesssim_{T,\norm{u_0}_{\Sigma_\alpha}} \frac{1}{R^b},
$$
where $b = 2\alpha(1-\delta) - d\delta$ with $\delta \in (0,1)$ chosen such that $b>0$.\\
Let $\eta > 0$. Then there exists $R_0>0$ independent of $\varepsilon$ such that
$$
\int_{|x| \ge R_0} \big|F_{1\varepsilon}(u_\varepsilon(t)) - F_1(u(t))\big| < \frac{\eta}{2}.
$$
$\bullet$ \textit{First integral:} For this choice of $R_0$, by Lemma \ref{1.2} we have
$$
\lim_{\varepsilon \to 0} \int_{|x| < R_0} \big|F_{1\varepsilon}(u_\varepsilon(t)) - F_1(u(t))\big| = 0.
$$
Hence, there exists $\varepsilon_0 > 0$ such that for all $\varepsilon < \varepsilon_0$,
$$
\int_{|x| < R_0} \big|F_{1\varepsilon}(u_\varepsilon(t)) - F_1(u(t))\big| < \frac{\eta}{2}.
$$
$\bullet$ \textit{Conclusion:} Combining the two estimates, for $\varepsilon < \varepsilon_0$ we obtain
$$
\int \big|F_{1\varepsilon}(u_\varepsilon(t)) - F_1(u(t))\big| < \eta.
$$
This shows that
\begin{equation*}\label{cv12}
\int F_{1\varepsilon}(u_\varepsilon(t)) \longrightarrow \int F_1(u(t)), \quad \text{for all } t \in \mathbb{R}_+.\qedhere
\end{equation*}
\end{proof}
\begin{lemma}
    Let $u$ be the solution of equation \eqref{NLS} given by Proposition \ref{op}. Then
$
u \in \mathcal{C}(\mathbb{R}_+, \Sigma_\alpha),
$
and the energy is conserved:
$$
E(u(t)) = E(u_0), \quad \text{for all } t \in \mathbb{R}_+.
$$
\end{lemma}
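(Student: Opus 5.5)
Energy conservation comes first; continuity in $\Sigma_\alpha$ is then deduced from it. Fix $T>0$ and $t\in[0,T]$. By Theorem \ref{ppp}, $E_\varepsilon(u_\varepsilon(t))=E_\varepsilon(u_0)$, and $E_\varepsilon(u_0)\to E(u_0)$ by Lemma \ref{rtt}. We pass to the limit in each term of $E_\varepsilon(u_\varepsilon(t))$ as written in \eqref{E}. Lemma \ref{xfg} gives $\int F_{1\varepsilon}(u_\varepsilon(t))\to\int F_1(u(t))$. Lemma \ref{xc} gives convergence of the $F_{2\varepsilon}$ and $\eta_\varepsilon$ terms, and Corollary \ref{rq2} handles the $G$ term. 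Every term except the kinetic one therefore converges, so
\[
\lim_{\varepsilon\to0}\tfrac12\|\nabla u_\varepsilon(t)\|_{L^2}^2=E(u_0)+\tfrac{\lambda}{2}\int |u(t)|^2\log|u(t)|^2-\tfrac{\lambda}{2}\int|u(t)|^2+\int G(|u(t)|^2).
\]
By weak lower semicontinuity (Lemma \ref{1.2}), $\|\nabla u(t)\|_{L^2}^2\le\liminf\|\nabla u_\varepsilon(t)\|_{L^2}^2$, hence $E(u(t))\le E(u_0)$, and this holds for either sign of $\lambda$. This is the point where the $\Sigma_\alpha$ bound matters: in $W_1$ with $\lambda<0$ the $F_1$ term had the wrong sign for Fatou, and here it converges genuinely. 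For the reverse inequality we use uniqueness and reversibility. We restart the construction from $u(t)\in\Sigma_\alpha$ and run it backwards via $v(s)=\overline{u(t-s)}$. Uniqueness (the Corollary after Theorem \ref{08}) identifies the solution so obtained with $u$, and the same argument gives $E(u_0)\le E(u(t))$. Hence $E(u(t))=E(u_0)$. Since the limit of $\|\nabla u_\varepsilon(t)\|_{L^2}$ equals $\|\nabla u(t)\|_{L^2}$, this also yields strong convergence $u_\varepsilon(t)\to u(t)$ in $H^1$, although we do not need it.

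Next we show $u\in\mathcal C(\mathbb R_+,\Sigma_\alpha)$. Lemma \ref{continue} already gives $u\in\mathcal C(\mathbb R_+,H^1)$, so it remains to prove that $t\mapsto\langle x\rangle^\alpha u(t)$ is continuous into $L^2$. By the corollary following Lemma \ref{momo}, $\langle x\rangle^\alpha u(t)$ is bounded in $L^2$ locally uniformly in $t$. Combined with $u\in\mathcal C(\mathbb R_+,L^2)$, this gives weak continuity of $\langle x\rangle^\alpha u$ in $L^2$, because it is bounded and its pairings with $\mathcal C_c$ test functions are continuous. For norm continuity, we pass to the limit in the integrated form of the identity of Lemma \ref{momo}:
\[
I_\alpha(u_\varepsilon(t))=I_\alpha(u_0)+4\alpha\int_0^t\Im\big(\langle x\rangle^{2\alpha-1}\nabla u_\varepsilon,u_\varepsilon\big)\,ds .
\]
The integrand converges for each $s$: $\nabla u_\varepsilon(s)\rightharpoonup\nabla u(s)$ weakly in $L^2$, while $\langle x\rangle^{2\alpha-1}u_\varepsilon(s)\to\langle x\rangle^{2\alpha-1}u(s)$ strongly in $L^2$. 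The strong convergence follows from strong $L^2$ convergence on compact sets together with tightness from the uniform $\langle x\rangle^\alpha$ bound, since $2\alpha-1<\alpha$. The integrand is also bounded by $N_T C(T,\|u_0\|_{\Sigma_\alpha})$. We also need $I_\alpha(u_\varepsilon(t))\to I_\alpha(u(t))$. For this we use the weak convergence $\langle x\rangle^\alpha u_\varepsilon(t)\rightharpoonup\langle x\rangle^\alpha u(t)$ together with convergence of the norms. The norm convergence comes from the bound $\frac{\mathrm d}{\mathrm dt}I_\alpha\le C\sqrt{I_\alpha}$ applied symmetrically, or alternatively from applying the identity directly to $u$ by the truncation technique of \cite[Section 6.5]{1}, which avoids the limit altogether. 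Either way, $t\mapsto I_\alpha(u(t))$ is an integral of a bounded function and is therefore continuous. Weak continuity plus continuity of the norm gives $\langle x\rangle^\alpha u\in\mathcal C(\mathbb R_+,L^2)$.

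The main obstacle is this last identification of $I_\alpha(u(t))$. The cleanest route is to apply the regularized virial-type identity directly to the limit $u$. This uses weights $\langle x\rangle^\alpha e^{-\epsilon'|x|^2}$, the equation in $H^{-1}_{\rm loc}$ from Lemma \ref{ghj}, and the fact that the nonlinear terms contribute nothing to the imaginary part. Letting $\epsilon'\to0$ by dominated convergence then gives the integral identity for $u$, and continuity follows.
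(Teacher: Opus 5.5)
Your proposal is correct and follows the paper's proof. The inequality $E(u(t))\le E(u_0)$ comes from passing to the limit in $E_\varepsilon(u_\varepsilon(t))=E_\varepsilon(u_0)$ using Lemma \ref{xfg}, Lemma \ref{xc}, Corollary \ref{rq2} and weak lower semicontinuity of the gradient term; the reverse inequality comes from uniqueness combined with time reversal, which you make more explicit than the paper's ``by uniqueness''; and continuity of $t\mapsto\|\langle x\rangle^\alpha u(t)\|_{L^2}$ comes from the truncated weighted identity of \cite[Section 6.5]{1}, exactly as the paper invokes. The detour through the integrated identity for $u_\varepsilon$, which you set aside, is not needed and is loose as written: the bound $\frac{\mathrm{d}}{\mathrm{d}t}I_\alpha\le C\sqrt{I_\alpha}$ applied symmetrically (via uniqueness and time reversal) gives Lipschitz control of $t\mapsto\sqrt{I_\alpha(u(t))}$, not the convergence $I_\alpha(u_\varepsilon(t))\to I_\alpha(u(t))$ you attribute to it, and your tightness argument for $\langle x\rangle^{2\alpha-1}u_\varepsilon(s)$ only works for $\alpha<1$.
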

\begin{proof}
 We prove first the conservation of energy 
  
If $\lambda > 0$, since $\Sigma_\alpha \subset W_1(\mathbb{R}^d)$, the energy is conserved.

If $\lambda < 0$, we have 
$$
E_\varepsilon(u_\varepsilon(t)) = E_\varepsilon(u_0).
$$
Hence,
$$
\frac{1}{2} \| \nabla u_\varepsilon(t) \|_{L^2}^2 = E_\varepsilon(u_0) + \frac{\lambda}{2} \int F_{1\varepsilon}(u_\varepsilon(t)) + \frac{\lambda}{2} \int F_{2\varepsilon}(u_\varepsilon(t)) - \lambda \int \eta_\varepsilon(|u_\varepsilon(t)|) + \int G(|u_\varepsilon(t)|^2).
$$
Using Lemma $\ref{xc}$, Corollary $\ref{rq2}$, Lemma \ref{xfg}, and Lemma $\ref{1.2}$, we obtain
$$
\frac{1}{2} \| \nabla u(t) \|_{L^2}^2 \le E(u_0) + \frac{\lambda}{2} \int F_1(u(t)) + \frac{\lambda}{2} \int F_2(u(t)) - \frac{\lambda}{2} \int |u(t)|^2 +  \int G(|u(t)|^2).
$$
In particular,
$$
E(u(t)) \le E(u_0), \quad \text{for all } t \in \mathbb{R}_+.
$$
and we get the equality by uniqueness.\\
Let us now show that $u \in \mathcal{C}(\mathbb{R}_+, \Sigma_\alpha)$. By Proposition \ref{u}, it suffices to show that $t\mapsto \norm{\abs{x}^{\alpha}u(t)}_{L^2}\in \mathcal{C}(\mathbb{R}_+).$ which can be proved in the same way as in \cite[Section 6.5]{1}, we conclude
$
u \in \mathcal{C}(\mathbb{R}_+, \Sigma_\alpha).
$
\end{proof}

\begin{lemma}
     The operator $L: u \mapsto \Delta u + \lambda u \log(|u|^2) +  u g(|u|^{2})$ is continuous and bounded from $\Sigma_\alpha(\mathbb{R}^d)$ to $\Sigma_\alpha'(\mathbb{R}^d)$.

 Moreover, the function $u$ defined in Lemma \ref{ghj} satisfies
$
u \in \mathcal{C}(\mathbb{R}_+, \Sigma_\alpha(\mathbb{R}^d))
\cap \mathcal{C}^1(\mathbb{R}_+, \Sigma_\alpha'(\mathbb{R}^d)),
$
and 
\begin{equation}
i u_t + \Delta u + \lambda u \log|u|^2 + u g(|u|^{2})
= 0 \quad \text{in } \Sigma_\alpha'(\mathbb{R}^d), \quad \text{for all } t \in \mathbb{R}_+.
\end{equation}
\end{lemma}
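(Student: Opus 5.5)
The plan mirrors the proof of Proposition \ref{op}, with $W_1(\mathbb{R}^d)$ replaced by $\Sigma_\alpha$. We split $L$ into three parts: $\Delta$, the logarithmic term $u\mapsto \lambda u\log|u|^2$, and the power-type term $u\mapsto u g(|u|^2)$. We treat each one separately, using the continuous embeddings $\Sigma_\alpha\hookrightarrow H^1(\mathbb{R}^d)$ and $H^{-1}(\mathbb{R}^d)\hookrightarrow \Sigma_\alpha'$. For $\Delta$, continuity from $H^1$ to $H^{-1}$ gives the claim at once.

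For the power-type term, we reuse the estimate from the proof of Proposition \ref{op}. There, Corollary \ref{power10} and Sobolev embeddings showed that $u\mapsto ug(|u|^2)$ is continuous and bounded from $H^1(\mathbb{R}^d)$ into $H^{-1}(\mathbb{R}^d)$. Composing with the embeddings above gives the result on $\Sigma_\alpha$.

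The logarithmic term is the main step. We split $u\log|u|^2=g_1(u)+g_2(u)$ with the cut-off $\zeta$ used in Lemma \ref{uniq}. The large-amplitude part $g_2$ is handled as in Lemma \ref{ii}: $|g_2(z_1)-g_2(z_2)|\lesssim (|z_1|^{\delta}+|z_2|^{\delta})|z_1-z_2|$ for small $\delta>0$. Hölder and Sobolev then give continuity and boundedness from $H^1$ into $L^{p'}\subset H^{-1}$.

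The small-amplitude part $g_1$ is the expected obstacle. Here $|g_1(z)|\lesssim |z|^{1-\delta}$, so we test against $\varphi\in\Sigma_\alpha$ and bound $\int |u|^{1-\delta}|\varphi|$. We write
\[
\int |u|^{1-\delta}|\varphi|\le \|\langle x\rangle^{\alpha}\varphi\|_{L^2}\,\|\langle x\rangle^{-\alpha}|u|^{1-\delta}\|_{L^2}.
\]
We then apply Hölder as in \eqref{zzz}, with $\delta<\frac{2\alpha}{d+2\alpha}$, so that $\langle x\rangle^{-2\alpha}|u|^{2-2\delta}$ is integrable with a bound by a power of $\|u\|_{\Sigma_\alpha}$. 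This gives boundedness.

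For continuity of $g_1$, take $u_n\to u$ in $\Sigma_\alpha$. Pass to a subsequence converging a.e. and dominated through the weighted bounds, as in the proof of Lemma \ref{xfg}. Split $\{|x|<R\}$, where $L^2$ convergence and the local estimate of Lemma \ref{cv2} apply, from $\{|x|\ge R\}$, where the tail is $O(R^{-b})$ uniformly in $n$. This yields $g_1(u_n)\to g_1(u)$ in $\Sigma_\alpha'$, and continuity along the full sequence follows from the uniqueness of the limit. Alternatively one may invoke Lemma \ref{oper} together with $\Sigma_\alpha\hookrightarrow W_1$ and $W_1'\hookrightarrow\Sigma_\alpha'$, provided the latter embedding is available from Appendix \ref{C}; that would shortcut this step.

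Finally, we know $u\in\mathcal{C}(\mathbb{R}_+,\Sigma_\alpha)$ from the preceding lemma, and $L$ is continuous from $\Sigma_\alpha$ to $\Sigma_\alpha'$, so $Lu\in\mathcal{C}(\mathbb{R}_+,\Sigma_\alpha')$. The equation $iu_t=-Lu$ holds in $H^{-1}(\Omega)$ for every $\Omega\Subset\mathbb{R}^d$ by Lemma \ref{ghj}. By density of $\mathcal{C}_c^\infty$ in $\Sigma_\alpha$, it extends to $\Sigma_\alpha'$: first in integrated form, $u(t)-u(s)=i\int_s^t Lu$, and then differentiating. This gives $u\in\mathcal{C}^1(\mathbb{R}_+,\Sigma_\alpha')$ together with the stated equation.
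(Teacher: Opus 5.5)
Your proof is correct, but it is more hands-on than the paper's argument, which takes essentially two lines. By the diagram \eqref{dig} there are continuous maps $\Sigma_\alpha\hookrightarrow W_1(\mathbb{R}^d)$ and $W_1'(\mathbb{R}^d)\to\Sigma_\alpha'$. The mapping property of $L$ on $\Sigma_\alpha$ is therefore Proposition \ref{op} (hence Lemma \ref{oper}) composed with these embeddings. Proposition \ref{op} also already gives $u\in\mathcal{C}^1(\mathbb{R}_+,W_1')$ with \eqref{equation2} holding in $W_1'$. Composing with the bounded linear map $W_1'\to\Sigma_\alpha'$ then yields $u\in\mathcal{C}^1(\mathbb{R}_+,\Sigma_\alpha')$ and the equation in $\Sigma_\alpha'$ at once. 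As in your last step, $u\in\mathcal{C}(\mathbb{R}_+,\Sigma_\alpha)$ is taken from the preceding lemma.

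You instead prove the estimate for the logarithmic term directly. You control the small-amplitude part $g_1$ with the weighted H\"older bound behind \eqref{zzz}, which is exactly the mechanism that makes $\Sigma_\alpha\subset W_1$ true. You then rebuild the time regularity from the local $H^{-1}(\Omega)$ equation of Lemma \ref{ghj}, via the integrated identity and density of $\mathcal{C}_c^\infty$ in $\Sigma_\alpha$. This is the step the paper already leaves implicit in the proof of Proposition \ref{op}.

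Your route is self-contained. It avoids the Orlicz-space duality result of \cite{7} and the embeddings asserted without proof in Appendix \ref{C}, and it gives explicit bounds in terms of $\|u\|_{\Sigma_\alpha}$. The paper's route buys brevity by reusing the $W_1$ theory. It is precisely the shortcut you mention, and the embedding you were unsure about is recorded in \eqref{dig}.

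Two minor points. For the local part of the continuity of $g_1$, the uniform H\"older continuity of $z\mapsto z\log|z|$ on bounded sets (Lemma \ref{i}) is a cleaner tool than Lemma \ref{cv2}, which is stated for $\varepsilon>0$. Also, your split estimate is already uniform over $\|\varphi\|_{\Sigma_\alpha}\le 1$, so the passage to an a.e.\ convergent, dominated subsequence is unnecessary.
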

\begin{proof}
Thanks to \eqref{equation2}, we have
$$
i  \partial_t u = - L u.
$$
where $Lu=\Delta u+\lambda u\log(\abs{u}^2)+ ug(\abs{u}^{2})$. On the other hand, by diagram \eqref{dig} and the Proposition \ref{op}, the operator $L$ is continuous and bounded from $\Sigma_\alpha$ to $\Sigma_\alpha'$. 
Therefore, this proves that
$
u \in \mathcal{C}^1(\mathbb{R}_+, \Sigma_\alpha').
$
\end{proof}

\section*{Acknowledgments}
The authors acknowledge the support of the CDP C2EMPI, as well as the French State under the France-2030 programme, the University of Lille, the Initiative of Excellence of the University of Lille, and the European Metropolis of Lille for their funding and support of the R-CDP-24-004-C2EMPI project.
I would like to express my deepest gratitude to my supervisors, Vianney Combet, Guillaume Ferriere, and Sahbi Keraani, for their invaluable guidance, availability, and support throughout the completion of this work.
\renewcommand{\appendixpagename}{Appendix}
\renewcommand{\appendixtocname}{Appendix}

\appendix
\appendixpage
\addappheadtotoc
\setcounter{theorem}{0}
\section{Logarithmic inequalities}
\begin{lemma}\label{LN}
Let $c>0$, then for all $x \ge c$ and all $\delta > 0$, we have
$$
\log(x) \le \frac{1}{\delta c^{\delta}} x^{\delta}+\log(c).
$$
Moreover, for all $x > 0$ and all $\delta_1, \delta_2 > 0$, 
$$
|\log(x)| \le \max\Big(\frac{1}{\delta_1}, \frac{1}{\delta_2}\Big) \left( x^{\delta_1} + x^{-\delta_2} \right).
$$
\end{lemma}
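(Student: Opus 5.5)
The first inequality is elementary calculus, and the second follows from it by splitting at $x=1$.

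\emph{First inequality.} Fix $c>0$ and $\delta>0$, and set
$$
h(x) := \frac{x^{\delta}}{\delta c^{\delta}} + \log(c) - \log(x), \qquad x \ge c .
$$
Then $h(c) = \frac{1}{\delta} > 0$. Its derivative is
$$
h'(x) = \frac{x^{\delta-1}}{c^{\delta}} - \frac{1}{x} = \frac{1}{x}\Big(\big(\tfrac{x}{c}\big)^{\delta} - 1\Big),
$$
which is nonnegative for $x \ge c$. So $h$ is nondecreasing on $[c,\infty)$, hence $h(x) \ge h(c) > 0$. This is exactly the claimed bound.

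\emph{Second inequality.} Let $x>0$ and $\delta_1,\delta_2>0$, and consider two regimes.

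For $x \ge 1$, apply the first part with $c=1$ and $\delta=\delta_1$. This gives $0 \le \log x \le x^{\delta_1}/\delta_1$, hence
$$
|\log x| \le \frac{x^{\delta_1}}{\delta_1}.
$$

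For $0<x<1$, set $y = 1/x > 1$. Applying the first part with $c=1$ and $\delta=\delta_2$ gives
$$
|\log x| = \log y \le \frac{y^{\delta_2}}{\delta_2} = \frac{x^{-\delta_2}}{\delta_2}.
$$

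In both regimes the right-hand side is at most $\max\big(\tfrac{1}{\delta_1},\tfrac{1}{\delta_2}\big)\big(x^{\delta_1}+x^{-\delta_2}\big)$, since both $x^{\delta_1}$ and $x^{-\delta_2}$ are nonnegative. This proves the second inequality.

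There is no real obstacle here. The only point needing care is the sign of $h'$, which uses $x \ge c$. If one instead wanted the first inequality for all $x>0$, it would fail near $0$ when $\log c$ is large, but the lemma only asserts it for $x\ge c$.
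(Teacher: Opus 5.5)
Your proof is correct. The paper states this lemma in the appendix without any proof and treats it as elementary. Your argument supplies it in the natural way: $h(c)=1/\delta>0$ and $h'\ge 0$ on $[c,\infty)$ give the first bound, and splitting at $x=1$ and applying the first part (with $c=1$) to $x$ or to $1/x$ gives the second.

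One correction to your closing remark, which does not affect the proof. The first inequality does not fail near $0$. For $0<x<c$ one has $\log x<\log c<\log c+\frac{x^{\delta}}{\delta c^{\delta}}$, so the bound holds for every $x>0$. The restriction $x\ge c$ is needed only for your monotonicity argument, not for the statement to be true.
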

\begin{lemma}\cite[Lemme 1.1.1]{29}\label{ingln}
For all $z_1, z_2 \in \mathbb{C}$, we have
$$
\left| \Im\Big(( z_1 \log|z_1| - z_2 \log|z_2| ) \overline{(z_1 - z_2)} \Big) \right| \le |z_1 - z_2|^2.
$$
\end{lemma}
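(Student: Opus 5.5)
The plan is to compute the imaginary part explicitly; it collapses to a single product that can then be bounded by elementary means. First I dispose of the degenerate cases. If $z_1=z_2$ there is nothing to prove. If one of the two points vanishes, say $z_2=0$, then with the convention $0\log 0=0$ the quantity reduces to $\Im\big(|z_1|^2\log|z_1|\big)=0$. The case $z_1=0$ is identical. So I assume $z_1,z_2\neq 0$. Since the left-hand side is symmetric under exchanging $z_1$ and $z_2$ (both factors change sign), I may also assume without loss of generality that $0<|z_2|\le|z_1|$.

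Next I expand the product. The diagonal terms $z_1\overline{z_1}\log|z_1|$ and $z_2\overline{z_2}\log|z_2|$ are real, so they drop out after taking the imaginary part. Using $\Im(z_2\overline{z_1})=-\Im(z_1\overline{z_2})$, this leaves
\begin{equation*}
\Im\Big((z_1\log|z_1|-z_2\log|z_2|)\overline{(z_1-z_2)}\Big)=\big(\log|z_2|-\log|z_1|\big)\,\Im\big(z_1\overline{z_2}\big).
\end{equation*}
This identity is the key step; everything afterwards is routine estimation.

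I then bound each factor separately.
\begin{itemize}
\item For the second factor, $\Im(z_2\overline{z_2})=0$ gives $\Im(z_1\overline{z_2})=\Im\big((z_1-z_2)\overline{z_2}\big)$, hence $|\Im(z_1\overline{z_2})|\le|z_2|\,|z_1-z_2|$.
\item For the first factor, concavity of the logarithm (equivalently, the mean value theorem on $[|z_2|,|z_1|]$) gives $0\le\log|z_1|-\log|z_2|\le\frac{|z_1|-|z_2|}{|z_2|}$.
\end{itemize}
Multiplying, the factor $|z_2|$ cancels, and the modulus of the quantity is at most $(|z_1|-|z_2|)\,|z_1-z_2|$. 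The reverse triangle inequality $|z_1|-|z_2|\le|z_1-z_2|$ then gives the claimed bound $|z_1-z_2|^2$.

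The only point that needs care is the normalization $|z_2|\le|z_1|$. It is what makes the logarithmic difference be divided by the \emph{smaller} modulus, which is exactly the modulus that appears in the bound on $\Im(z_1\overline{z_2})$, so that it cancels. Without this choice the constant would degenerate as $|z_2|\to 0$. Beyond that I do not expect any obstacle.
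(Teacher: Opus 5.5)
Your proof is correct. The paper gives no proof of its own and only cites Cazenave--Haraux, and your argument is essentially the standard proof of that lemma: the reduction to $(\log|z_2|-\log|z_1|)\,\Im(z_1\overline{z_2})$, the normalization $0<|z_2|\le|z_1|$, the bounds $|z_2|\,|z_1-z_2|$ and $(|z_1|-|z_2|)/|z_2|$ on the two factors, and the reverse triangle inequality.
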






\begin{lemma}\cite[Lemma A.2]{3}\label{cv2}
For all $\varepsilon,\delta_1, \delta_2 \in (0,1)$ and all $z_1, z_2 \in \mathbb{C}$, we have
\begin{align*}
| z_2 \log(|z_2|+\varepsilon) - z_1 \log|z_1| | 
&\lesssim_{\delta_1, \delta_2} \varepsilon + |z_1 - z_2| \\
&\quad + |z_1 - z_2|^{1/2} \Big( |z_2|^{1/2 - \delta_1} + |z_2|^{1/2 + \delta_2}  + |z_1|^{1/2 - \delta_1} + |z_1|^{1/2 + \delta_2} \Big).
\end{align*}
\end{lemma}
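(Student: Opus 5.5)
The statement to prove is Lemma \ref{cv2}: for $\varepsilon,\delta_1,\delta_2\in(0,1)$ and $z_1,z_2\in\mathbb{C}$,
$$|z_2\log(|z_2|+\varepsilon)-z_1\log|z_1||\lesssim \varepsilon+|z_1-z_2|+|z_1-z_2|^{1/2}\big(\cdots\big).$$
I would first split off the regularization error by the triangle inequality:
$$|z_2\log(|z_2|+\varepsilon)-z_1\log|z_1||\le |z_2|\,\big|\log(|z_2|+\varepsilon)-\log|z_2|\big|+|z_2\log|z_2|-z_1\log|z_1||.$$
For the first piece, write $\log(1+\varepsilon/|z_2|)\le \varepsilon/|z_2|$, so it is bounded by $\varepsilon$. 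This uses the elementary inequality $\log(1+y)\le y$ and costs nothing. The whole difficulty is therefore the $\varepsilon$-free estimate for $f(z)=z\log|z|$.

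For the second piece I would assume without loss of generality $|z_1|\le|z_2|$ (the bound is symmetric) and distinguish two cases.

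\textbf{Case $|z_1-z_2|\ge |z_2|/2$.} Both $|z_1|,|z_2|\le 2|z_1-z_2|$, and I estimate each term separately. By Lemma \ref{LN}, $|z_j\log|z_j||\lesssim |z_j|^{1-2\delta_1}+|z_j|^{1+2\delta_2}$. Then write $|z_j|^{1-2\delta_1}=|z_j|^{1/2}|z_j|^{1/2-2\delta_1}\lesssim |z_1-z_2|^{1/2}|z_j|^{1/2-\delta_1}$ after absorbing a harmless change of exponent (rename $2\delta_1$ as $\delta_1$). The power $|z_j|^{1+2\delta_2}$ is handled identically. This yields the claimed form.

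\textbf{Case $|z_1-z_2|<|z_2|/2$.} Along the segment $z(s)=z_1+s(z_2-z_1)$ one has $|z(s)|\sim|z_2|\sim|z_1|$, and the segment avoids $0$. The real derivative of $f$ satisfies $|\mathrm{d}f(z)|\le |\log|z||+1$. The mean value theorem gives
$$|f(z_2)-f(z_1)|\le \big(1+\sup_s|\log|z(s)||\big)|z_1-z_2|\lesssim |z_1-z_2|+|\log|z_2||\,|z_1-z_2|.$$
Now use $|z_1-z_2|\le |z_1-z_2|^{1/2}|z_2|^{1/2}$ (valid since $|z_1-z_2|<|z_2|$) together with Lemma \ref{LN}, $|\log|z_2||\lesssim |z_2|^{-\delta_1}+|z_2|^{\delta_2}$. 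This gives $|z_1-z_2|^{1/2}(|z_2|^{1/2-\delta_1}+|z_2|^{1/2+\delta_2})$, as required.

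The main, though mild, obstacle is bookkeeping in this near-diagonal case. One must justify the comparability $|z(s)|\sim|z_2|$ so that the supremum of $|\log|z(s)||$ is controlled by the log at $|z_2|$ uniformly, which holds because $|z(s)|\in[|z_2|/2,\,3|z_2|/2]$. One must also check that the constants depend only on $\delta_1,\delta_2$. Everything else is direct application of Lemma \ref{LN}.
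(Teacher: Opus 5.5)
Your proof is correct. The paper gives no argument for this lemma; it imports it directly from Hayashi--Ozawa (Lemma A.2 there). What you provide is therefore a self-contained proof rather than a variant of one in the text.

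Your structure is sound:
\begin{itemize}
\item First you peel off the regularization error via $|z_2|\log(1+\varepsilon/|z_2|)\le\varepsilon$.
\item In the far-diagonal case $|z_1-z_2|\ge|z_2|/2$ you estimate each term separately with Lemma~\ref{LN}.
\item In the near-diagonal case you apply the mean value inequality on a segment staying in $\{|z|\ge|z_2|/2\}$, where $|\mathrm{d}f(z)|\le 1+|\log|z||$. You then use $|z_1-z_2|\le|z_1-z_2|^{1/2}|z_2|^{1/2}$ together with Lemma~\ref{LN}.
\end{itemize}
The constants depend only on $\delta_1,\delta_2$.

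One cleanup in the far case. As literally written, the step $|z_j|^{1/2-2\delta_1}\lesssim|z_j|^{1/2-\delta_1}$ is false for small $|z_j|$, and the renaming of $2\delta_1$ is unnecessary. Apply Lemma~\ref{LN} directly with exponents $\delta_1,\delta_2$. This gives $|z_j\log|z_j||\lesssim|z_j|^{1/2}\bigl(|z_j|^{1/2-\delta_1}+|z_j|^{1/2+\delta_2}\bigr)$, and $|z_j|^{1/2}\le\sqrt{2}\,|z_1-z_2|^{1/2}$ finishes.

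What your direct proof buys over the citation is that it isolates where the $\varepsilon$ comes from: it is exactly $\log(1+y)\le y$ applied to $\log(|z_2|+\varepsilon)$. This matters because in the proof of Lemma~\ref{1.2} the paper applies the estimate to $u_\varepsilon\log(|u_\varepsilon|^2+\varepsilon)$, not to $\log(|u_\varepsilon|+\varepsilon)$. For that form your first step yields $|z_2|\log(1+\varepsilon/|z_2|^2)\lesssim\sqrt{\varepsilon}$ instead. An $O(\varepsilon)$ bound genuinely fails there: take $z_1=z_2=\sqrt{\varepsilon}$, where the left side equals $\sqrt{\varepsilon}\log 2$. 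The weaker $\sqrt{\varepsilon}$ bound is still enough for the convergence argument in Lemma~\ref{1.2}.
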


\begin{lemma}\cite[Lemma 3.1]{14}\label{i}
For all $(\alpha,c) \in (0,1)\times(0,1]$ and all $|z_1|, |z_2| \leq c$, we have
$$
| z_1 \log|z_1| - z_2 \log|z_2| | \le \frac{4}{1-\alpha} |z_1 - z_2|^{\alpha}.
$$
\end{lemma}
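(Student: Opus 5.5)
We must prove Lemma \ref{i}: for $\alpha\in(0,1)$, $c\in(0,1]$ and $|z_1|,|z_2|\le c$, we have $|z_1\log|z_1|-z_2\log|z_2||\le \frac{4}{1-\alpha}|z_1-z_2|^\alpha$. Since $c\le1$, it suffices to treat $c=1$, i.e. the closed unit disc. Set $f(z)=z\log|z|$ with $f(0)=0$. We may assume $|z_2|\le|z_1|$, so that $0\le |z_2|\le |z_1|\le 1$. Set $r=|z_1|$, $s=|z_2|$, $\delta=|z_1-z_2|$.

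The plan is to compare two regimes according to the size of $\delta$ relative to $r$.

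\textbf{Regime $\delta\ge r/2$.} Here I bound crudely with the triangle inequality: $|f(z_1)|+|f(z_2)|\le r|\log r|+s|\log s|\le 2r|\log r|$. This uses that $t\mapsto t|\log t|$ is increasing on $[0,e^{-1}]$; for $t\in[e^{-1},1]$ one instead bounds directly by $e^{-1}$, and since then $\delta\gtrsim r\ge e^{-1}$ the bound is still of the right form. The key elementary inequality is
\[
t^{1-\alpha}|\log t|\le \frac{1}{e(1-\alpha)}\qquad (0<t\le1),
\]
obtained by maximizing $t^{1-\alpha}\log(1/t)$, whose maximum is attained at $t=e^{-1/(1-\alpha)}$. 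It gives $r|\log r|\le \frac{1}{e(1-\alpha)}r^\alpha\le \frac{2^\alpha}{e(1-\alpha)}\delta^\alpha$. Hence $|f(z_1)-f(z_2)|\le \frac{2\cdot 2}{e(1-\alpha)}\delta^\alpha\le\frac{4}{1-\alpha}\delta^\alpha$.

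\textbf{Regime $\delta<r/2$.} Now the segment $[z_2,z_1]$ stays in the annulus $\{|z|\ge r/2\}$, and also in $\{|z|\le 1\}$ by convexity. The real differential of $f$ satisfies $|Df(z)|\le |\log|z||+1$, since $\partial f$ contributes $\log|z|+\tfrac12$ and $\bar\partial f$ contributes $\tfrac12\, z/\bar z$. The mean value inequality then gives
\[
|f(z_1)-f(z_2)|\le (|\log(r/2)|+1)\,\delta .
\]
Write $\delta=\delta^\alpha\delta^{1-\alpha}$ and use $\delta^{1-\alpha}\le r^{1-\alpha}$. It remains to show
\[
r^{1-\alpha}\bigl(|\log(r/2)|+1\bigr)\le \frac{4}{1-\alpha}.
\]
This follows from the same elementary inequality, since $|\log(r/2)|\le|\log r|+\log 2$, and $r^{1-\alpha}(1+\log2)\le 1+\log 2\le \frac{2}{1-\alpha}$. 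The total is therefore $\le \frac{1/e+2}{1-\alpha}\le\frac{4}{1-\alpha}$.

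\textbf{Main obstacle.} The only delicate step is keeping the constant at exactly $4/(1-\alpha)$ in the first regime, in particular for $r$ near $1$, where $t|\log t|$ is no longer monotone. I would handle this uniformly using $t|\log t|\le \frac{t^\alpha}{e(1-\alpha)}$ for all $t\in(0,1]$, applied separately to $r$ and to $s\le r$. This yields $|f(z_1)|+|f(z_2)|\le \frac{2}{e(1-\alpha)}r^\alpha\le\frac{2\cdot2}{e(1-\alpha)}\delta^\alpha$, which avoids monotonicity entirely.
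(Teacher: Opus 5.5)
Your proof is correct and complete. The paper gives no argument of its own for this lemma; it quotes it from Hayashi's Lemma 3.1, so there is nothing internal to compare against. Your two-regime argument is a valid self-contained proof. When $|z_1-z_2|\ge |z_1|/2$ you use the triangle inequality; otherwise the segment $[z_2,z_1]$ stays in $\{|z_1|/2<|z|\le 1\}$ and you apply the mean value inequality with $|Df(z)|\le |\log|z||+1$. Both cases are closed by $t^{1-\alpha}|\log t|\le \frac{1}{e(1-\alpha)}$ on $(0,1]$. The resulting constants are $\frac{4}{e(1-\alpha)}$ and $\frac{2+1/e}{1-\alpha}$, both below $\frac{4}{1-\alpha}$.

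The only blemish is your first draft of the regime $\delta\ge r/2$. There you invoke monotonicity of $t|\log t|$, which fails on $[e^{-1},1]$, and patch it vaguely. Delete that and keep the version from your last paragraph: apply $t|\log t|\le \frac{t^\alpha}{e(1-\alpha)}$ to $r$ and to $s\le r$ separately, which needs no monotonicity.

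If you want to avoid the case split, assume $|z_2|\le |z_1|$ and write $z_1\log|z_1|-z_2\log|z_2|=(z_1-z_2)\log|z_1|+z_2\log(|z_1|/|z_2|)$.
\begin{itemize}
\item The second term is at most $|z_1|-|z_2|\le\delta\le 2^{1-\alpha}\delta^\alpha$, using $\log x\le x-1$ and $\delta\le 2$.
\item The first term is at most $\delta^\alpha(2|z_1|)^{1-\alpha}|\log|z_1||\le \frac{2}{e(1-\alpha)}\delta^\alpha$, since $\delta\le 2|z_1|$.
\end{itemize}
This gives the constant $2+\frac{2}{e(1-\alpha)}\le \frac{4}{1-\alpha}$ in one line. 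It is only more economical; your argument is equally correct.
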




\begin{lemma}\cite[Lemma 3.1]{14}\label{ii}
For all $\alpha,c > 0$ and all $|z_1|, |z_2| \ge c$, we have
$$
| z_1 \log|z_1| - z_2 \log|z_2| | \le C(\alpha,c) \big( |z_1|^{\alpha} + |z_2|^{\alpha} \big) |z_1 - z_2|.
$$
\end{lemma}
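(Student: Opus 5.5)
The statement is Lemma~\ref{ii}: for $|z_1|,|z_2|\ge c$ the map $z\mapsto z\log|z|$ is locally Lipschitz with constant $C(\alpha,c)(|z_1|^\alpha+|z_2|^\alpha)$. The plan is a direct real-variable estimate. I will not integrate along the straight segment, since it may pass near the origin. Instead I split the difference into a radial part and an angular part.

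\textbf{Reduction.} Assume without loss of generality $|z_1|\ge|z_2|\ge c$ and write
\[
z_1\log|z_1|-z_2\log|z_2| = (z_1-z_2)\log|z_1| + z_2\bigl(\log|z_1|-\log|z_2|\bigr).
\]

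\textbf{First term.} Since $|z_1|\ge c$, Lemma~\ref{LN} handles both ranges of $|z_1|$. For $|z_1|\ge 1$ it gives $|\log|z_1||\le \alpha^{-1}|z_1|^\alpha$. For $c\le|z_1|\le 1$ we simply have $|\log|z_1||\le|\log c|\le |\log c|\,c^{-\alpha}|z_1|^\alpha$. Hence
\[
|\log|z_1||\le C(\alpha,c)|z_1|^\alpha,
\]
and the first term is bounded by $C(\alpha,c)|z_1|^\alpha|z_1-z_2|$.

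\textbf{Second term.} By the mean value theorem applied to $\log$ on $[|z_2|,|z_1|]$,
\[
|\log|z_1|-\log|z_2||\le \frac{|z_1|-|z_2|}{|z_2|}.
\]
Therefore
\[
|z_2|\,|\log|z_1|-\log|z_2||\le |z_1|-|z_2|\le |z_1-z_2|.
\]
This requires no lower bound at all once $|z_2|>0$. It is then absorbed using $1\le c^{-\alpha}|z_2|^\alpha$.

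\textbf{Conclusion.} Summing the two bounds gives the claim with $C(\alpha,c)=\max\{\alpha^{-1},|\log c|c^{-\alpha}\}+c^{-\alpha}$.

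The only genuine point to watch is the choice of ordering. Multiplying the difference of logs by the smaller modulus $|z_2|$ is what makes the mean value quotient harmless. The hypothesis $|z_i|\ge c$ is used only to control $\log|z_1|$ near $|z_1|$ small and to convert the constant $1$ into $|z|^\alpha$. Nothing else in the argument is an obstacle.
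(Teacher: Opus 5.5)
Your proof is correct and complete, for every $\alpha>0$. The three ingredients are:
\begin{itemize}
\item the identity $z_1\log|z_1|-z_2\log|z_2|=(z_1-z_2)\log|z_1|+z_2(\log|z_1|-\log|z_2|)$;
\item the bound $|\log|z_1||\le C(\alpha,c)|z_1|^{\alpha}$ for $|z_1|\ge c$;
\item the estimate $|z_2|\,\bigl(\log|z_1|-\log|z_2|\bigr)\le |z_1|-|z_2|\le |z_1-z_2|$ when $|z_1|\ge|z_2|$.
\end{itemize}
Together these give the claim with the explicit constant $\max\{\alpha^{-1},|\log c|c^{-\alpha}\}+c^{-\alpha}$.

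The paper does not prove this lemma; it only quotes it from Hayashi \cite{14}, so there is no internal argument to match. The closest argument in the paper is the proof of Lemma~\ref{gui}, which integrates the differential of the nonlinearity along the straight segment $z(s)$ joining the two points. That route is systematic, since it needs only a growth bound on the derivative. For $z\mapsto z\log|z|$, however, it needs an extra case distinction or an integrability argument. The reason is the one you anticipated: two points with $|z_j|\ge c$ can be joined by a segment through the origin (for example $z_2=-z_1$), where the derivative behaves like $\log|z|$ and is unbounded. In Lemma~\ref{gui} this difficulty is absorbed by the cutoff $1-\rho$, which kills the integrand near $0$.

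Your radial/angular splitting avoids the segment entirely. Because the difference of logarithms is multiplied by the smaller modulus, no case analysis is needed and the constants are explicit. This is the same device that underlies the classical proof of Lemma~\ref{ingln}.
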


\section{Some basic Lemmas}
In this section, we recall some basic lemmas that will be used throughout the paper.
\begin{lemma}\cite[p.84]{1}\label{power}
For all $z_1, z_2 \in \mathbb{C}$ and $\alpha \ge 0$, we have
$$
| z_1 |z_1|^{\alpha} - z_2 |z_2|^{\alpha} | \le (\alpha + 1) \big( |z_1|^{\alpha} + |z_2|^{\alpha} \big) |z_1 - z_2|.
$$
\end{lemma}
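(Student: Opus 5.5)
The statement is the elementary inequality
$$
\bigl| z_1 |z_1|^{\alpha} - z_2 |z_2|^{\alpha} \bigr| \le (\alpha + 1) \big( |z_1|^{\alpha} + |z_2|^{\alpha} \big) |z_1 - z_2|,
$$
and I would prove it by integrating the derivative of $F(z) = z|z|^\alpha$ along the segment joining $z_2$ to $z_1$.

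\textbf{Degenerate case.} The case $\alpha = 0$ is trivial, since the left side is $|z_1 - z_2|$ and the right side is $2|z_1 - z_2|$. So assume $\alpha > 0$.

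\textbf{Smoothness of $F$.} Viewed as a map $\mathbb{R}^2 \to \mathbb{R}^2$, $F$ is $\mathcal{C}^1$ on $\mathbb{C}\setminus\{0\}$. Its real differential is
$$
dF(z)h = |z|^\alpha h + \alpha |z|^{\alpha-2}\, z\, \Re(\overline{z}h).
$$
Hence $|dF(z)h| \le (\alpha+1)|z|^\alpha |h|$. At $z=0$ the differential is $0$, because $|F(z)| = |z|^{1+\alpha} = o(|z|)$. This bound is continuous in $z$, so $F \in \mathcal{C}^1(\mathbb{C})$ with $\|dF(z)\| \le (\alpha+1)|z|^\alpha$ everywhere.

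\textbf{Integrating along the segment.} Set $z(s) = z_2 + s(z_1 - z_2)$ for $s \in [0,1]$. The fundamental theorem of calculus gives
$$
F(z_1) - F(z_2) = \int_0^1 dF(z(s))(z_1 - z_2)\,\mathrm{d}s,
$$
and therefore
$$
|F(z_1)-F(z_2)| \le (\alpha+1)\,|z_1-z_2| \sup_{s\in[0,1]} |z(s)|^\alpha.
$$

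\textbf{Bounding the supremum.} Since $|z(s)| \le \max(|z_1|,|z_2|)$ and $t\mapsto t^\alpha$ is increasing,
$$
|z(s)|^\alpha \le \max(|z_1|^\alpha,|z_2|^\alpha) \le |z_1|^\alpha + |z_2|^\alpha,
$$
which concludes the proof.

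\textbf{Main obstacle.} The only delicate point is the regularity at the origin when the segment passes through $0$ and $\alpha$ is small. This is handled by noting $F\in\mathcal{C}^1$ with $dF(0)=0$, as shown above. Alternatively, if the segment contains $0$, one can split the integral at that point, or approximate by segments avoiding $0$ and pass to the limit using continuity of $F$.
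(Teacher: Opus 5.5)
Your proof is correct. The paper gives no argument of its own and only cites Cazenave; the standard route to this inequality is the derivative bound $\|dF(z)\|\le(\alpha+1)|z|^{\alpha}$ for $F(z)=z|z|^{\alpha}$, integrated along the segment from $z_2$ to $z_1$. That is exactly your argument, and it is also the device the paper uses in the proof of Lemma~\ref{gui}. Your version even gives the slightly sharper bound $(\alpha+1)\max(|z_1|^{\alpha},|z_2|^{\alpha})\,|z_1-z_2|$. Treating $\alpha=0$ separately and checking $dF(0)=0$ for $\alpha>0$ correctly handles the only delicate point.
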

\begin{corollary}\label{power10}
    For all $z_1, z_2 \in \mathbb{C}$ and $\alpha \ge 0$, we have $$
| z_1 g(|z_1|^2) - z_2 g(|z_2|^2) | \lesssim \big( 1+|z_1|^{2\sigma} + |z_2|^{2\sigma} \big) |z_1 - z_2|,
$$ 
and 
$$\abs{G(|z_1|^2)-G(|z_2|^2)}\lesssim \abs{\abs{z_1}^2-\abs{z_2}^2}+\abs{\abs{z_1}^{2\sigma+2}-\abs{z_2}^{2\sigma+2}}.$$
\end{corollary}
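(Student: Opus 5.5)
The plan is to reduce both inequalities to a single pointwise bound on $g$ and $g'$ that holds on all of $(0,\infty)$:
\begin{equation*}
|g(s)| + s|g'(s)| \lesssim 1 + s^{\sigma}, \qquad s>0 .
\end{equation*}
Both estimates then follow by integrating a derivative along a segment. The parameter $\alpha$ in the statement plays no role.

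\textbf{Step 1: the pointwise bound, split at $s=1$.}
\begin{itemize}
\item For $0<s\le 1$: by \ref{A1}, $g$ is continuous on $[0,1]$, so it is bounded there. The function $s\mapsto s g'(s)$ is continuous on $(0,1]$ and tends to $0$ as $s\to0$, so it is also bounded on $(0,1]$.
\item For $s\ge 1$: assumption \ref{A2} gives directly $s|g'(s)|\le C s^{\sigma}$. Writing $g(s)=g(1)+\int_1^s g'$, we get $|g(s)|\le |g(1)| + \tfrac{C}{\sigma}s^{\sigma}$.
\end{itemize}
Combining the two ranges gives the displayed bound.

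\textbf{Step 2: the first inequality.} Set $f(z)=z\,g(|z|^2)$ on $\mathbb{C}\simeq\mathbb{R}^2$. On $\mathbb{C}\setminus\{0\}$, $f$ is $\mathcal{C}^1$ with real differential
\begin{equation*}
\mathrm{d}f(z)h = g(|z|^2)\,h + 2z\,g'(|z|^2)\,\Re(\overline{z}h).
\end{equation*}
By Step 1, its operator norm is at most $|g(|z|^2)|+2|z|^2|g'(|z|^2)|\lesssim 1+|z|^{2\sigma}$.

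Now take $z(s)=z_2+s(z_1-z_2)$ for $s\in[0,1]$. The segment meets $0$ in at most one point, unless $z_1=z_2=0$, which is trivial. Since $f$ is continuous at $0$ (because $g(0)=0$), the map $s\mapsto f(z(s))$ is continuous on $[0,1]$ and $\mathcal{C}^1$ off at most one point. Its derivative is bounded, so it is absolutely continuous, and
\begin{equation*}
|f(z_1)-f(z_2)| \le \int_0^1 \|\mathrm{d}f(z(s))\|\,|z_1-z_2|\,\mathrm{d}s .
\end{equation*}
Finally $|z(s)|\le\max(|z_1|,|z_2|)$, so $|z(s)|^{2\sigma}\le |z_1|^{2\sigma}+|z_2|^{2\sigma}$, which gives the first inequality.

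\textbf{Step 3: the second inequality.} Put $a=|z_1|^2$ and $b=|z_2|^2$, and assume without loss of generality $a\ge b$. Using $|g(t)|\lesssim 1+t^\sigma$ from Step 1, the definition of $G$ gives
\begin{equation*}
|G(a)-G(b)| \le \tfrac12\int_b^a |g(t)|\,\mathrm{d}t \lesssim (a-b) + \tfrac{1}{\sigma+1}\big(a^{\sigma+1}-b^{\sigma+1}\big).
\end{equation*}
This is exactly the claimed bound, since $a^{\sigma+1}=|z_1|^{2\sigma+2}$ and $b^{\sigma+1}=|z_2|^{2\sigma+2}$.

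\textbf{Expected obstacle.} The only delicate point is the lack of differentiability of $f$ at the origin. This is handled by the absolute continuity argument in Step 2, which needs the small-$s$ part of Step 1, namely the boundedness of $s g'(s)$ near $0$ coming from \ref{A1}. The constants depend only on $g$ and $\sigma$.
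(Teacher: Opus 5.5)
Your proof is correct and is essentially the argument the paper intends: the paper gives no proof and presents the statement as a consequence of the power-type estimate in Lemma~\ref{power}, which is proved by the same mean-value argument along the segment from $z_2$ to $z_1$. Your Step 1 bound $|g(s)|+s|g'(s)|\lesssim 1+s^{\sigma}$ on all of $(0,\infty)$, derived from \ref{A1}--\ref{A2} (with the possible non-differentiability of $z\mapsto z\,g(|z|^2)$ at the origin handled by continuity), is what lets that argument work for a general $g$, and Step 3 then follows at once from $|g(t)|\lesssim 1+t^{\sigma}$.
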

\begin{proposition}[Gagliardo-Nirenberg \cite{6}]\label{GN}
Let $0\leq p\leq \frac{2}{d-2}$ and $\theta=\frac{dp}{2p+2}$. Then, for all $u\in H^1(\mathbb{R}^d)$, 
    \begin{equation}\label{gn}
        \norm{u}_{L^{2p+2}}\leq C_{GN}\norm{\nabla u}_{L^2}^{\theta}\norm{u}_{L^2}^{1-\theta}.
    \end{equation}
\end{proposition}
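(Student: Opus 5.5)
This is the Gagliardo--Nirenberg inequality. I would prove it by interpolating Lebesgue norms against the Sobolev (or Nash-type) bound for $\nabla u$, and then fixing the exponents by scaling. The exponent $\theta=\frac{dp}{2p+2}$ is forced by scaling: if $u_\kappa(x)=u(\kappa x)$, then
\[
\norm{u_\kappa}_{L^{2p+2}}=\kappa^{-d/(2p+2)}\norm{u}_{L^{2p+2}},\qquad
\norm{\nabla u_\kappa}_{L^2}=\kappa^{1-d/2}\norm{\nabla u}_{L^2},\qquad
\norm{u_\kappa}_{L^2}=\kappa^{-d/2}\norm{u}_{L^2}.
\]
Matching powers of $\kappa$ gives $-\frac{d}{2p+2}=\theta-\frac d2$, that is, $\theta=\frac d2-\frac{d}{2p+2}=\frac{dp}{2p+2}$. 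So the inequality is scale invariant, and it suffices to prove it for $u$ in a dense class such as $\mathcal{C}_c^\infty(\mathbb{R}^d)$; density in $H^1$ then concludes, since both sides are continuous for the $H^1$ norm.

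\textbf{Case $d\ge3$.} Here $p\le\frac{2}{d-2}$ means $2p+2\le 2^*=\frac{2d}{d-2}$. I would use the Sobolev inequality $\norm{u}_{L^{2^*}}\lesssim\norm{\nabla u}_{L^2}$, proved in the standard way via the Gagliardo inequality $\norm{v}_{L^{d/(d-1)}}\le\prod_j\norm{\partial_jv}_{L^1}^{1/d}$ applied to $v=|u|^{\gamma}$. Hölder's interpolation
\[
\norm{u}_{L^{2p+2}}\le\norm{u}_{L^2}^{1-\theta}\norm{u}_{L^{2^*}}^{\theta},
\qquad
\frac{1}{2p+2}=\frac{1-\theta}{2}+\frac{\theta}{2^*},
\]
gives exactly $\theta=\frac{dp}{2p+2}$, which finishes this case.

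\textbf{Case $d=1,2$.} Here the constraint on $p$ is only $p<\infty$, and there is no endpoint Sobolev embedding into $L^\infty$ when $d=2$.
\begin{itemize}
\item For $d=1$, I would use the pointwise bound $|u(x)|^2\le 2\int|u||u'|$, which gives $\norm{u}_{L^\infty}^2\le 2\norm{u}_{L^2}\norm{u'}_{L^2}$. Combined with $\norm{u}_{L^{2p+2}}^{2p+2}\le\norm{u}_{L^\infty}^{2p}\norm{u}_{L^2}^2$, this yields $\theta=\frac{p}{2p+2}$.
\item For $d=2$, I would apply the Gagliardo inequality $\norm{v}_{L^2}\lesssim\norm{\nabla v}_{L^1}$ to $v=|u|^{p+1}$. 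Then $\norm{\nabla v}_{L^1}\le(p+1)\norm{u}_{L^{2p}}^{p}\norm{\nabla u}_{L^2}$ by Cauchy--Schwarz, so $\norm{u}_{L^{2p+2}}^{p+1}\lesssim\norm{u}_{L^{2p}}^{p}\norm{\nabla u}_{L^2}$. Iterating this from $L^2$ upward, or interpolating $L^{2p}$ between $L^2$ and $L^{2p+2}$, gives the bound with the correct $\theta$.
\end{itemize}

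The main obstacle is the $d=2$ case for general, non-integer $p$: there the iteration or interpolation bookkeeping must be done with care. One clean route is to apply the $d=2$ step once with $L^{2p}$ replaced by an interpolation between $L^2$ and $L^{2p+2}$, absorb the resulting power of $\norm{u}_{L^{2p+2}}$ on the left, and check that the final exponent agrees with the one dictated by scaling.
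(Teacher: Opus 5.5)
You have a genuine gap in $d=2$ for $0<p<1$. Apart from that, your $d\ge3$ argument (Sobolev via Gagliardo's $L^1$ inequality, then Hölder between $L^2$ and $L^{2^*}$) and your $d=1$ argument (via $\norm{u}_{L^\infty}^2\le 2\norm{u}_{L^2}\norm{u'}_{L^2}$) are correct, and the exponents check out. The paper gives no proof and only cites the classical result, so the question is whether your self-contained route closes.

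For $0<p<1$ you have $2p<2$. Your bound $\norm{u}_{L^{2p+2}}^{p+1}\lesssim\norm{u}_{L^{2p}}^{p}\norm{\nabla u}_{L^2}$ is true, but it involves a norm that $H^1$ does not control: a function behaving like $|x|^{-1}(\log|x|)^{-1}$ at infinity lies in $H^1(\mathbb{R}^2)$ but in no $L^q$ with $q<2$. Moreover, $L^{2p}$ does not sit between $L^2$ and $L^{2p+2}$. So your ``clean route'' (interpolate $L^{2p}$ between $L^2$ and $L^{2p+2}$, then absorb) is unavailable there.

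This case matters for the paper. When $d=2$, the inequality is applied with $p=\sigma$ arbitrary in $(0,\infty)$, and with $p=\delta/2$ small in the proof of Proposition \ref{u}. You also located the obstacle in the wrong place. Non-integer $p\ge1$ is not a problem: with $\norm{u}_{L^{2p}}\le\norm{u}_{L^2}^{1-s}\norm{u}_{L^{2p+2}}^{s}$ and $s=(p^2-1)/p^2\in[0,1)$, absorption gives exactly $\norm{u}_{L^{2p+2}}\lesssim\norm{u}_{L^2}^{1/(p+1)}\norm{\nabla u}_{L^2}^{p/(p+1)}$ for every real $p\ge1$.

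The repair is short. Take $p=1$ in your $d=2$ step, where $L^{2p}=L^2$, to get Ladyzhenskaya's inequality $\norm{u}_{L^4}^2\lesssim\norm{u}_{L^2}\norm{\nabla u}_{L^2}$. For $0<p<1$, interpolate $L^{2p+2}$ between $L^2$ and $L^4$ by Hölder. The scaling exponent $\theta=\frac d2-\frac{d}{2p+2}$ is affine in $1/(2p+2)$, so Hölder interpolation between two valid inequalities of this type always yields the correct exponent, here $\theta=\frac{2p}{2p+2}$.

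A smaller point concerns your density step. Saying the left-hand side is continuous on $H^1$ presupposes the embedding $H^1\hookrightarrow L^{2p+2}$ you are proving. Instead, apply the inequality to $u_n-u_m$ for an approximating sequence $u_n\in\mathcal{C}_c^\infty$, so that $(u_n)$ is Cauchy in $L^{2p+2}$. Alternatively, use Fatou along an a.e.-convergent subsequence.
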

\begin{lemma}\label{x11}
Let $a,b,A,B \in \mathbb{R}_+^*$. Then the function 
$$
f(x) = A x^a + B x^{-b}, \quad x \in \mathbb{R}_+^*,
$$
admits a unique minimizer, which is attained at 
$$
x = \left(\frac{bB}{aA}\right)^{\frac{1}{a+b}}.
$$
\end{lemma}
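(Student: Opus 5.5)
We minimize $f(x)=Ax^a+Bx^{-b}$ on $(0,\infty)$ by elementary calculus. The plan is to show that $f$ is strictly decreasing and then strictly increasing, with the turning point at the claimed value.

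First, since $a,b>0$, we have $f(x)\to+\infty$ both as $x\to 0^+$ (from the term $Bx^{-b}$) and as $x\to+\infty$ (from the term $Ax^a$). Because $f$ is continuous on $(0,\infty)$, it therefore attains a global minimum at some interior point, and any such point is a critical point.

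Next we compute the derivative:
$$f'(x)=aAx^{a-1}-bBx^{-b-1}=x^{-b-1}\bigl(aAx^{a+b}-bB\bigr).$$
The factor $x^{-b-1}$ is positive, and $x\mapsto aAx^{a+b}-bB$ is strictly increasing because $a+b>0$ and $aA>0$. It vanishes exactly at
$$x_*=\Bigl(\frac{bB}{aA}\Bigr)^{\frac{1}{a+b}}.$$
Hence $f'<0$ on $(0,x_*)$ and $f'>0$ on $(x_*,\infty)$. So $f$ is strictly decreasing and then strictly increasing, which makes $x_*$ the unique minimizer. This sign analysis already settles uniqueness, so the limit argument in the previous paragraph is only a consistency check.

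There is no real obstacle here. The only point needing care is factoring $f'$ so that the single sign change is visible, and that follows immediately from the monotonicity of $x^{a+b}$.
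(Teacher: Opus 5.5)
Your proof is correct. The factorization $f'(x)=x^{-b-1}\bigl(aAx^{a+b}-bB\bigr)$ shows a single sign change, from negative to positive, at $x_*=\bigl(\frac{bB}{aA}\bigr)^{\frac{1}{a+b}}$, which gives existence and uniqueness of the minimizer at once, so your coercivity paragraph is indeed superfluous. The paper states this lemma in its appendix as a basic fact without proof, and your elementary calculus check is the standard verification it implicitly relies on.
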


\begin{lemma}\cite[Lemma 3.4]{14}\label{prs}
Let $T>0$, $a,b \ge 0$, $\beta \in (0,1)$, and let $f \in L^1(0,T)$ be a positive function. Suppose that
$$
f(t) \le a + \frac{b}{\beta} \int_0^t f(s)^{1-\beta}  ds, \quad \forall t \in [0,T].
$$
Then, for all $t \in [0,T]$, we have
$$
f(t) \le (a^{\beta} + b t)^{1/\beta}.
$$
\end{lemma}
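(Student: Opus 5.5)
This is the Osgood/Bihari-type estimate from \cite[Lemma 3.4]{14}. I would prove it by a direct comparison argument on the integral quantity, not on $f$ itself. First the degenerate cases. If $b=0$, the hypothesis gives $f\le a$, which is the claim. If $a=0$, I replace $a$ by some $a'>0$, which keeps the hypothesis true, and let $a'\downarrow 0$ at the end. The right-hand side $(a'^{\beta}+bt)^{1/\beta}$ is continuous in $a'$, so the bound passes to the limit. So from now on I assume $a>0$ and $b>0$.

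Set
\[
F(t):=a+\frac{b}{\beta}\int_0^t f(s)^{1-\beta}\,ds .
\]
Since $f\in L^1(0,T)$ and $1-\beta\in(0,1)$, Hölder on the finite interval gives $f^{1-\beta}\in L^1(0,T)$. Hence $F$ is absolutely continuous, nondecreasing, and satisfies $F\ge a>0$. The hypothesis reads $f\le F$ on $[0,T]$. Because $s\mapsto s^{1-\beta}$ is increasing on $\mathbb{R}_+$, for a.e. $t$ we get
\[
F'(t)=\frac{b}{\beta}f(t)^{1-\beta}\le \frac{b}{\beta}F(t)^{1-\beta}.
\]

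The key step is to differentiate $F^{\beta}$. Since $F\ge a>0$ and $F$ is absolutely continuous, $F^\beta$ is also absolutely continuous; the map $s\mapsto s^\beta$ is Lipschitz on $[a,\infty)$. The chain rule holds a.e.:
\[
\frac{d}{dt}F(t)^{\beta}=\beta F(t)^{\beta-1}F'(t)\le \beta F^{\beta-1}\cdot\frac{b}{\beta}F^{1-\beta}=b .
\]
Integrating from $0$ to $t$ gives $F(t)^\beta\le a^\beta+bt$. Therefore
\[
f(t)\le F(t)\le (a^\beta+bt)^{1/\beta}\quad\text{for all } t\in[0,T].
\]

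The only delicate point is regularity. I need $F$ absolutely continuous so that the a.e. differential inequality can be integrated, and I need $F$ bounded below by $a>0$ so that $F^{\beta-1}$ is harmless. Both are guaranteed by the reductions above. Note also that if the hypothesis is only assumed a.e. in $t$, the same argument gives $f\le F$ a.e. The final bound then holds a.e. for $f$, and everywhere for $F$.
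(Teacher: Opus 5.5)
Your proof is correct. You compare $f$ with $F(t)=a+\frac{b}{\beta}\int_0^t f(s)^{1-\beta}\,ds$, which is absolutely continuous with $F\ge a>0$, and show $\frac{d}{dt}F^{\beta}\le b$ a.e.; together with your reductions for $b=0$ and $a=0$ (via $a'\downarrow 0$) and the regularity steps (H\"older for $f^{1-\beta}\in L^1$, and the chain rule for $s\mapsto s^{\beta}$ on $[a,\infty)$), this is the standard Bihari-type argument. The paper gives no proof of its own and only quotes \cite[Lemma 3.4]{14}, so there is no in-paper argument to compare against, and your proof is a complete justification of the cited result.
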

\begin{lemma}\label{fait12}
        Let $x>0$, and let $P(x)=a_1 x^{s_1} + \cdots + a_n x^{s_n}$, where $s_i \in (0,1)$ and $a_i\geq0$ for all $i$. Then there exists a constant $c\geq0$ such that
$$
x - P(x) \ge \frac{1}{2}x - c.
$$
    \end{lemma}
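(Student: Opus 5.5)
The statement to prove is Lemma~\ref{fait12}: for $x>0$ and $P(x)=\sum_{i=1}^n a_i x^{s_i}$ with $s_i\in(0,1)$ and $a_i\ge 0$, there is $c\ge 0$ such that $x-P(x)\ge \tfrac12 x-c$. Equivalently, I want to show that
$$h(x):=\tfrac12 x-P(x)$$
is bounded below on $(0,\infty)$, and then take $c:=\max\bigl(0,-\inf_{x>0}h(x)\bigr)$. The plan is to bound each monomial separately by a small multiple of $x$ plus a constant, using Young's inequality. This is the form in which the lemma is used in the proofs of Lemma~\ref{1} and Proposition~\ref{u}.

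\emph{Step 1 (one term).} Fix $i$ with $a_i>0$; terms with $a_i=0$ contribute nothing. Apply Young's inequality with exponents $p=1/s_i$ and $p'=1/(1-s_i)$, writing $a_i x^{s_i}=(\kappa x)^{s_i}\cdot a_i\kappa^{-s_i}$. This gives
$$a_ix^{s_i}\le s_i\kappa x+(1-s_i)\bigl(a_i\kappa^{-s_i}\bigr)^{1/(1-s_i)}.$$
Choose $\kappa=\frac{1}{2n s_i}$, so that the linear part is at most $\frac{1}{2n}x$ and the constant $c_i$ depends only on $a_i$, $s_i$ and $n$.

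If one prefers to avoid Young's inequality, Lemma~\ref{x11} gives the same result. Minimize $\phi_i(x)=\frac{1}{2n}x-a_ix^{s_i}$ directly: its only critical point is $x_*=(2n a_i s_i)^{1/(1-s_i)}$, and $\phi_i\to+\infty$ as $x\to\infty$ because $s_i<1$. Hence $\phi_i\ge \phi_i(x_*)=:-c_i$.

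\emph{Step 2 (summing).} Summing over $i$ gives $P(x)\le \tfrac12 x+\sum_i c_i$. Therefore
$$x-P(x)\ge \tfrac12 x-c,\qquad c:=\sum_i c_i\ge 0.$$

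There is no real obstacle. The only point needing care is that the constant must not depend on $x$. This holds because the bound comes from an explicit minimization of a function that is coercive precisely because every $s_i<1$; at $s_i=1$ the argument fails, which is why the hypothesis $s_i\in(0,1)$ is required.
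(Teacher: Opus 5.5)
Your proof is correct. The termwise Young bound $a_ix^{s_i}\le \frac{x}{2n}+c_i$, with $c_i=(1-s_i)\bigl(a_i(2ns_i)^{s_i}\bigr)^{1/(1-s_i)}$, summed over at most $n$ nonzero terms, gives the claim with $c$ independent of $x$, which is the right reading of the statement. The paper states Lemma~\ref{fait12} without proof, so there is no argument to compare yours against.

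One small correction to your alternative route: Lemma~\ref{x11} does not literally apply to $\phi_i(x)=\frac{x}{2n}-a_ix^{s_i}$, because that function has a negative coefficient and no negative power. Your direct critical-point computation does stand on its own, provided you also note that $\phi_i(0^+)=0$ and $\phi_i'<0$ near $0$. It then yields exactly the same constant $c_i$ as Young's inequality.
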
 

\begin{lemma}[Aubin-Lions \cite{4}]\label{aubin}
Let $X_0$, $X$, and $X_1$ be three Banach spaces such that:

\begin{itemize}
    \item $X_0 \hookrightarrow X$ with a compact embedding,
    \item $X \hookrightarrow X_1$ with a continuous embedding.
\end{itemize}

For $p, q \in [1, +\infty]$, define
$$
\Gamma := \left\{ u \in L^p(0,T; X_0) \ \middle| \ \partial_t u \in L^q(0,T; X_1) \right\}.
$$

Then:
\begin{itemize}
    \item[(i)] If $p < \infty$, the embedding of $\Gamma$ into $L^p(0,T; X)$ is compact.
    \item[(ii)] If $p = \infty$ and $q > 1$, the embedding of $\Gamma$ into $\mathcal{C}([0,T]; X)$ is compact.
\end{itemize}
\end{lemma}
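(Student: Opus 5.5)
The plan is to reduce both statements to compactness in the weaker space $X_1$ and then upgrade to $X$ through an interpolation-type inequality. The first step is the Ehrling–Lions lemma: for every $\eta>0$ there is $C_\eta>0$ such that
$$
\|v\|_{X}\le \eta\,\|v\|_{X_0}+C_\eta\,\|v\|_{X_1}\qquad\text{for all } v\in X_0 .
$$
I will prove it by contradiction. If it fails for some $\eta$, there are $v_n$ with $\|v_n\|_{X_0}=1$ and $\|v_n\|_X>\eta+n\|v_n\|_{X_1}$. Compactness of $X_0\hookrightarrow X$ lets me extract $v_n\to v$ in $X$. Since $\|v_n\|_X$ is bounded, $\|v_n\|_{X_1}\to0$, so $v=0$ by continuity of $X\hookrightarrow X_1$. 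This contradicts $\|v\|_X=\lim\|v_n\|_X\ge\eta$.

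For (ii), take a bounded family $\mathcal F\subset\Gamma$, say $\|u\|_{L^\infty(X_0)}+\|\partial_tu\|_{L^q(X_1)}\le M$. Since $q>1$, each $u$ has a representative in $\mathcal C([0,T];X_1)$ satisfying
$$
\|u(t)-u(s)\|_{X_1}\le |t-s|^{1-1/q}\,\|\partial_tu\|_{L^q(X_1)}\le M|t-s|^{1-1/q},
$$
so $\mathcal F$ is equicontinuous in $X_1$. Let $K$ be the closure in $X$ of the ball $\{\|v\|_{X_0}\le M\}$. Then $K$ is compact in $X$, hence compact (so closed) in $X_1$. Since $u(t)\in K$ for a.e. $t$, continuity in $X_1$ gives $u(t)\in K$ for every $t$. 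I expect this to be the delicate point: it is what makes pointwise bounds available for all $t$, not only almost every $t$.

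Arzelà–Ascoli in $\mathcal C([0,T];X_1)$, with pointwise values in the compact set $K$, then yields a subsequence $u_n$ that is Cauchy in $\mathcal C([0,T];X_1)$. To upgrade the convergence to $X$, I apply the Ehrling inequality to $u_n(t)-u_m(t)$. Elements of $K$ are only $X$-limits of $X_0$-bounded sequences, so I use the inequality in its closed form $\|v\|_X\le 2M\eta+C_\eta\|v\|_{X_1}$ for $v\in K-K$, obtained by passing to the limit. This gives $\sup_t\|u_n-u_m\|_X\le 2M\eta+C_\eta\sup_t\|u_n-u_m\|_{X_1}$. Letting $n,m\to\infty$ and then $\eta\to0$ shows the subsequence is Cauchy in $\mathcal C([0,T];X)$.

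For (i), with $p<\infty$, I will use Simon's characterization of relatively compact sets in $L^p(0,T;X)$. It requires two properties: the averages $\int_{t_1}^{t_2}u\,dt$ form a relatively compact set in $X$, and $\|u(\cdot+h)-u\|_{L^p(0,T-h;X)}\to0$ uniformly on $\mathcal F$ as $h\to0$. The first holds because these averages are bounded in $X_0$, which embeds compactly into $X$. For the second, Ehrling gives
$$
\|\tau_hu-u\|_{L^p(X)}\le 2\eta\|u\|_{L^p(X_0)}+C_\eta\|\tau_hu-u\|_{L^p(X_1)},
$$
and the last term is at most $C_\eta\,h\,T^{1/p}\|\partial_tu\|_{L^1(X_1)}$, which tends to $0$ as $h\to0$ for each fixed $\eta$. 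The main obstacle is Simon's criterion itself, which I would take as known. If a self-contained proof is preferred, I would replace it by mollification in time: this reduces to the equicontinuous case already treated in (ii), using the same translation estimate.
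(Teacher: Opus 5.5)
The paper does not prove this lemma. It quotes it from the cited reference (it is the Aubin--Lions lemma in Simon's form), so there is no in-paper argument to compare against. Your route is the standard proof of that result and is essentially correct: Ehrling's inequality, then Arzel\`a--Ascoli in $\mathcal{C}([0,T];X_1)$ upgraded to $X$ for (ii), and Simon's characterization of relatively compact subsets of $L^p(0,T;X)$ via averages and time translations for (i).

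One estimate in (i) is false as written. For $p>1$ you cannot bound $\|\tau_hu-u\|_{L^p(0,T-h;X_1)}$ by $h\,T^{1/p}\|\partial_tu\|_{L^1(X_1)}$. Take $u$ to be a scalar ramp from $0$ to $1$ over an interval of length $h/2$, times a fixed unit vector. Then $u$ is close to a step, and the left side is of order $h^{1/p}$, which exceeds $h\,T^{1/p}$ for small $h$.

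What the fundamental theorem of calculus actually gives is
\[
\|\tau_hu-u\|_{L^\infty(0,T-h;X_1)}\le\|\partial_tu\|_{L^1(X_1)},
\]
and, by Fubini,
\[
\|\tau_hu-u\|_{L^1(0,T-h;X_1)}\le h\,\|\partial_tu\|_{L^1(X_1)}.
\]
Interpolating, $\|\tau_hu-u\|_{L^p(0,T-h;X_1)}\le h^{1/p}\|\partial_tu\|_{L^1(X_1)}$. This still tends to $0$ uniformly on bounded subsets of $\Gamma$, for every $q\ge 1$, so your conclusion survives.

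In (ii), you should make one step explicit. Your closed Ehrling inequality applied to $u(t)-u(s)$, together with the H\"older bound in $X_1$, gives
\[
\|u(t)-u(s)\|_X\le 2M\eta+C_\eta M|t-s|^{1-1/q}.
\]
This is what shows $\Gamma\subset\mathcal{C}([0,T];X)$ in the first place. You need it so that the embedding is defined and the uniform limit of your Cauchy subsequence is continuous in $X$. It would also let you run Arzel\`a--Ascoli directly in $\mathcal{C}([0,T];X)$, with values in the compact set $K$.
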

\section[Spaces Sigma and W1]{Spaces $\Sigma_\alpha(\mathbb{R}^d)$ and $W_1(\mathbb{R}^d)$}\label{C}
The Orlicz space defined$$
W_1(\mathbb{R}^d) := \big\{ u \in H^1(\mathbb{R}^d) \ \big| \ |u|^2 \log|u| \in L^1(\mathbb{R}^d) \big\},
$$
is a Banach space endowed with the Luxemburg norm defined by
$$
\norm{u}_{W_1(\mathbb{R}^d)} = \norm{u}_{H^1(\mathbb{R}^N)} + \inf \Bigg\{ k > 0 \ \Big| \ \int_{\mathbb{R}^d} A\big(k^{-1}|u|\big) \mathrm{d}x \le 1 \Bigg\},
$$
where
$$
A(s) =
\begin{cases}
-s^2 \log s^2, & \text{if } 0 \le s \le e^{-3},\\
3 s^2 + 4 e^{-3} s - e^{-6}, & \text{if } s \ge e^{-3}.
\end{cases}
$$
For $0 < \alpha \le 1$, we define the space
$$
\Sigma_\alpha := \Bigl\{ u \in H^1(\mathbb{R}^d) \Big| \langle x \rangle^{\alpha} u \in L^2(\mathbb{R}^d) \Bigr\},
$$
where for $x \in \mathbb{R}^d$, 
$$
\langle x \rangle := \sqrt{1 + |x|^2}.
$$
We endow $\Sigma_\alpha$ with the norm
$$
\norm{u}_{\Sigma_\alpha}^2 := \norm{u}_{H^1(\mathbb{R}^d)}^2 + \|\langle x \rangle^{\alpha} u\|_{L^2(\mathbb{R}^d)}^2.
$$
Equipped with this norm, $\Sigma_\alpha$ is a reflexive and separable Banach space.
we remark that the following diagram commutes and all the inclusions are continuous:  
\begin{equation}\label{dig}
    \begin{tikzcd}
\Sigma_\alpha \arrow[r, hook] \arrow[d, hook] & W_1(\mathbb{R}^d) \arrow[r, hook] \arrow[d, hook] & H^1(\mathbb{R}^d) \arrow[r, hook] \arrow[d, hook] & L^2(\mathbb{R}^d) \arrow[d, hook] \\
\Sigma_\alpha' & \arrow[l, hook'] W_1(\mathbb{R}^d)' & \arrow[l, hook'] H^{-1}(\mathbb{R}^d) & \arrow[l, hook'] L^2(\mathbb{R}^d)
\end{tikzcd}
\end{equation}
\section*{Declaration of competing interest}
No competing interest.

\end{document}